\title[A new concordance homomorphism from Khovanov homology]{A new concordance homomorphism from integral Khovanov homology}
\author{Lukas Lewark}
\address{ETH Z\"urich, R\"amistrasse 101, 8092 Z\"urich, Switzerland}
\email{\myemail{llewark@math.ethz.ch}}
\urladdr{\url{https://people.math.ethz.ch/~llewark/}}
\keywords{Knots, Concordance, Slice genus, Khovanov homology, Rasmussen invariants}
\subjclass{57K10, 57K18}
\def\@tocline#1#2#3#4#5#6#7{\relax
  \ifnum #1>\c@tocdepth 
  \else
    \par \addpenalty\@secpenalty\addvspace{#2}%
    \begingroup \hyphenpenalty\@M
    \@ifempty{#4}{%
      \@tempdima\csname r@tocindent\number#1\endcsname\relax
    }{%
      \@tempdima#4\relax
    }%
    \hspace{-2.1\parindent} \leftskip#3\relax \advance\leftskip\@tempdima\relax
    \rightskip\@pnumwidth plus4em \parfillskip-\@pnumwidth
    #5\leavevmode\hskip-\@tempdima #6\nobreak\relax
    \ifnum#1<0\hfill\else\dotfill\fi\hbox to\@pnumwidth{\@tocpagenum{#7}}\par
    \nobreak
    \endgroup
  \fi}
\newcommand{\nocontentsline}[3]{}
\newcommand{\tocless}[2]{\bgroup\let\addcontentsline=\nocontentsline#1{#2}\egroup}
\let\cref\Cref
\crefname{subsection}{section}{sections}
\Crefname{subsection}{Section}{Sections}
\Crefname{enumi}{}{}
\crefname{equation}{}{}
\definecolor{darkblue}{RGB}{0,0,96}
\definecolor{gray}{RGB}{127,127,127}
\definecolor{darkred}{RGB}{160,0,0}
\definecolor{lightyellow}{RGB}{255,255,128}
\newcommand{\myemail}[1]{\href{mailto:#1}{#1}}
\newcommand{\qua}{\hskip 0.4em \ignorespaces}
\def\arxiv#1{\relax\ifhmode\unskip\qua\fi
\href{http://arxiv.org/abs/#1}%
{\tt arXiv:\penalty -100\unskip#1}}
\def\MR#1{\relax\ifhmode\unskip\qua\fi
\href{https://mathscinet.ams.org/mathscinet-getitem?mr=#1}{\tt MR#1}}
\def\ZB#1{\relax\ifhmode\unskip\qua\fi
\href{https://zbmath.org/?q=an:#1}{\tt Zbl\:#1}}
\def\xox#1{\csname xx#1\endcsname}
\renewenvironment{thebibliography}[1]{
  \begin{oldthebibliography}{#1}\small
    \setlength{\itemsep}{.5ex}
    \setlength{\parskip}{0em}
}
{
  \end{oldthebibliography}
}
  \def\unskip{}%
  \def\\{}%
  \def\texttt#1{<#1>}%
  \def\Rightarrow{=>}
\let\stdthebibliography\thebibliography
\let\stdendthebibliography\endthebibliography
\numberwithin{equation}{section}
\declaretheorem[numberwithin=section]{lemma}
\newtheorem{theorem}[lemma]{Theorem}
\newtheorem{proposition}[lemma]{Proposition}
\newtheorem{conjecture}[lemma]{Conjecture}
\newtheorem*{theorem*}{Theorem}
\newtheorem{question}[lemma]{Question}
\newtheorem*{question*}{Question}
\theoremstyle{definition}
\newtheorem{definition}[lemma]{Definition}
\newtheorem{remark}[lemma]{Remark}
\newtheorem{example}[lemma]{Example}
\DeclareMathAlphabet{\mathpzc}{OT1}{pzc}{m}{it}
\newcommand{\Z}{\mathbb{Z}}
\newcommand{\Hom}{\text{Hom}}
\newcommand{\id}{\text{id}}
\newcommand{\qdeg}{\operatorname{qdeg}}
\newcommand{\qrk}{\operatorname{qrk}}
\newcommand{\tqrk}{\operatorname{tqrk}}
\newcommand{\tdeg}{\operatorname{tdeg}}
\newcommand{\tqdeg}{\operatorname{tqdeg}}
\newcommand{\nbd}{\nobreakdash--}
\DeclareMathOperator{\gl}{gl}
\DeclareMathOperator{\Char}{char}
\definecolor{darkgreen}{rgb}{0.0, 0.7, 0.0}
\newcommand{\stair}{\Sigma}
\newcommand{\calstair}{\mathcal{S}}
\begin{document}
\begin{abstract}
The universal Khovanov chain complex of a knot
modulo an appropriate equivalence relation
is shown to yield a homomorphism on the
smooth concordance group, which is strictly
stronger than all Rasmussen invariants over fields of different characteristics combined.
\end{abstract}
\maketitle
\section{Introduction}
Khovanov homology's \cite{kh1}
first geometric application
was given by the Rasmussen invariant~\cite{ras3}: a homomorphism $s_0$ from the smooth knot
concordance group $\mathcal{C}$ to $\Z$, providing
a lower bound for the smooth slice genus strong enough to
determine the smooth slice genus for torus knots.
Although $s_0$ is modeled on the concordance homomorphism $\tau$ from Heegaard--Floer homology~\cite{osz10,phdrasmussen}, and similar to it in many respects, there 
are also applications of $s_0$ that are not available using~$\tau$~\cite{zbMATH07168645}.

The Rasmussen invariant $s_0$ is extracted from rational Khovanov homology.
Similarly, homologies with $\mathbb{F}_p$ coefficients for prime $p$
yield further Rasmussen invariants~$s_p$~\cite{mvt},
and it is conjectured that $s_0, s_2, s_3, \ldots$ are linearly independent~\cite{zbMATH06296598,postcard,schuetz,LZ22}.
But what further concordance information is there in Khovanov homology?
One fruitful approach is to extract such information from additional structure on Khovanov homology,
e.g.~from a Khovanov stable homotopy type~\cite{zbMATH06296598},
or a spectral sequence coming from triply graded homology~\cite{arXiv:2004.10807}.
The focus in this paper, however, shall be on concordance information
that, just like the Rasmussen invariants, comes from
Khovanov homology over different Frobenius algebras~\cite{zbMATH05039154},
or equivalently from Bar-Natan's homology~\cite{bncob}.

That is to say, we are dealing with the universal Khovanov chain complex~$C(D)$ associated with a diagram~$D$ of a knot~$K$, which consists of graded modules over the polynomial ring~$\Z[G]$ (where $G$ is just a formal variable).
For the experts, let us note that $C(D)$ is the reduced chain complex coming from the Frobenius algebra $\Z[G,X]/(X^2+GX)$ over the ring $\Z[G]$.
But let us emphasize that to follow this paper in detail, a modicum of homological algebra suffices and no previous knowledge of Khovanov homology is required.
Indeed, for the construction of our new concordance homomorphism, we will exclusively rely on seven axioms for Khovanov homology (listed in \cref{sec:axioms}).

The Rasmussen invariant $s_c(K)$, for $c$ equal to zero or a prime,
may be extracted from $C(D) \otimes_{\Z[G]} \mathbb{F}[G]$ for $\mathbb{F}$ a field of characteristic~$c$.
If one wishes to find concordance information beyond the $s_c(K)$ in $C(D)$,
one may try to abandon the tensoring with the coefficient module $\mathbb{F}[G]$, and instead directly work with $C(D)$.
This strategy has been employed by Lipshitz--Sarkar~\cite[Section~6]{zbMATH06296598}
to construct concordance invariants $s^{\Z,m}_{\min}(K), s^{\Z,m}_{\max}(K) \in \Z$ for $m\in \Z$
(not related to the Khovanov homotopy type constructed in the same paper);
by Sano--Sato~\cite{arXiv:2211.02494}, who construct invariants $ss_c$ and $\widetilde{ss}_c$
depending on a choice of PID $R$ and $c\in R$; and 
by Sch\"utz~\cite{schuetz}, who extracts from $C(D)$ a concordance invariant $s^{\Z}(K)$,
which takes values in the set of finite tuples of integers.
Unlike the Rasmussen invariants $s_c$, none of the above-mentioned concordance invariants appear to give rise to a new homomorphism on the concordance group.
This does not come as a complete surprise: the coefficient ring $\Z[G]$ is not a PID,
and $\mathbb{F}[G]$ being a PID is a crucial ingredient in the proof that $s_c$ is a concordance homomorphism, i.e.\ well-behaved with respect to mirrors and sums of knots.

In this paper, we present a new concordance homomorphism coming from~$C(D)$.
It is constructed by considering the set of so-called \emph{knot-like} chain complexes
(see \cref{def:knotlike}) modulo \emph{$Z$\nbd equivalence} (see \cref{def:Zeq}).
Similar constructions exist for the knot Floer complex $CFK^{\infty}$~\cite{MR3260841}
and for the Khovanov--Rozansky $\mathfrak{sl}_n$--complex~\cite{lewarklobb}.
We obtain the following.
\begin{theorem}\label{thm:construction}
Modulo $Z$\nbd equivalence, knot-like chain complexes form an abelian group $\mathcal{G}$ with group
operation the tensor product, the inverse of a complex given by its dual, and the complex
with graded rank 1 as neutral element.
Associating to a knot $K$ the $Z$\nbd equivalence class of
the universal Khovanov chain complex $C(D)$ of one its diagrams $D$
induces a group homomorphism $S$ from the smooth concordance group to $\mathcal{G}$.
\end{theorem}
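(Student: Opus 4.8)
The plan is to prove \cref{thm:construction} in two stages: first that $\mathcal{G}$ is a group under $\otimes$, and then that $K \mapsto S(K)$ is a well-defined homomorphism on $\cC$. For the group structure I would begin by checking that the class of knot-like complexes is closed under the tensor product over $\Z[G]$ and under passing to the dual $C^{*} = \Hom_{\Z[G]}(C,\Z[G])$: graded ranks multiply, and the defining conditions of \cref{def:knotlike} are readily seen to be preserved by both operations. Next I would verify that $Z$\nbd equivalence (\cref{def:Zeq}) is an equivalence relation compatible with $\otimes$ — it is preserved under $(-)\otimes D$ — so that $\otimes$ descends to a well-defined operation on $\mathcal{G}$. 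Associativity and commutativity hold already up to degree-preserving chain homotopy equivalence, hence up to $Z$\nbd equivalence, and the graded-rank-$1$ complex $\mathbf{1}$ is a two-sided unit because $C \otimes \mathbf{1} \cong C$.

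The only substantive point in the group-structure stage is that $C^{*}$ represents the inverse of $C$, that is, $C \otimes C^{*} \sim_{Z} \mathbf{1}$. I would establish this using the evaluation $\operatorname{ev}\colon C^{*} \otimes C \to \mathbf{1}$ and the coevaluation $\operatorname{coev}\colon \mathbf{1} \to C \otimes C^{*}$, which — after the natural identification $C \otimes C^{*} \cong C^{*} \otimes C$ — furnish degree-preserving chain maps in both directions between $\mathbf{1}$ and $C \otimes C^{*}$. Here the knot-like hypothesis enters decisively: after inverting $G$, the homology of $C$ is free of graded rank $1$ on a canonical generator $\mathfrak{s}$, hence so is that of $C^{*}$, and a direct computation shows that $\operatorname{coev}$ sends $1$, and $\operatorname{ev}$ sends $\mathfrak{s}^{*} \otimes \mathfrak{s}$, to unit multiples of the respective generators. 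Thus both maps become isomorphisms on homology after inverting $G$, which is what is needed for a $Z$\nbd equivalence, and we conclude $[C^{*}] = [C]^{-1}$ in $\mathcal{G}$.

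For the second stage the axioms of \cref{sec:axioms} carry the argument. Reidemeister invariance gives that the universal complexes of different diagrams of a fixed $K$ are chain homotopy equivalent, so $[C(D)] \in \mathcal{G}$ is independent of $D$; it is the Lee-type deformation axiom that makes $C(D)$ knot-like, i.e.\ that identifies its homology after inverting $G$ as free of graded rank $1$. To see that $S$ factors through smooth concordance, I would take a concordance $\Sigma$ from $K_{0}$ to $K_{1}$, use the cobordism-map axiom to obtain chain maps $\phi_{\Sigma}\colon C(D_{0}) \to C(D_{1})$ and $\phi_{\bar{\Sigma}}\colon C(D_{1}) \to C(D_{0})$, and note that $\bar{\Sigma}\circ\Sigma$ and $\Sigma\circ\bar{\Sigma}$ are connected cobordisms of genus $0$, so that functoriality together with the neck-cutting relation forces both composites to act by units on homology after inverting $G$; hence $\phi_{\Sigma}$ and $\phi_{\bar{\Sigma}}$ become mutually inverse isomorphisms there, $C(D_{0}) \sim_{Z} C(D_{1})$, and $S(K_{0}) = S(K_{1})$. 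Additivity then follows from the connected-sum axiom $C(D_{1} \# D_{2}) \simeq C(D_{1}) \otimes C(D_{2})$, which gives $S(K_{1} \# K_{2}) = S(K_{1}) \cdot S(K_{2})$; and the mirror axiom, identifying $C(\bar{D})$ with $C(D)^{*}$, yields $S(\bar{K}) = S(K)^{-1}$, as it must for a homomorphism since $\bar{K} = -K$ in $\cC$.

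I expect the main obstacle to be, in both stages, the same point: controlling the homology of the relevant complexes after inverting $G$ together with its canonical generator, and checking that the structural maps in question — evaluation and coevaluation in the first stage, cobordism maps in the second — carry that generator to a unit multiple of a generator. Everything else is formal. In particular, the assertion that a connected genus-$0$ cobordism induces a $Z$\nbd equivalence is the $\Z[G]$\nbd linear refinement of the computation underlying the Rasmussen invariants, and tracing precisely which of the seven axioms enters at each step is what the write-up mainly consists of.
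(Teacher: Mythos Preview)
Your overall architecture matches the paper's, and the key maps you single out for the inverse---evaluation and coevaluation---are exactly the ones the paper uses. Two points need correction, however.

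First, you repeatedly write ``after inverting $G$''. In this paper $Z$\nbd equivalence is defined via the ungraded module $Z=\Z[G]/(G-1)$, i.e.\ setting $G=1$; it is not defined via $\Z[G,G^{\pm1}]$. This is not merely terminological: the knot-like hypothesis \eqref{axiom:knot} asserts only that $H_*(C\otimes Z)\cong \Z$ concentrated in degree~$0$, and says nothing directly about $H_*(C\otimes\Z[G,G^{\pm1}])$ (note $\Z[G,G^{\pm1}]$ is not a PID, so freeness of that homology would need its own argument). Your computation for the inverse does go through once you work over~$Z$: since $C\otimes Z$ is a bounded complex of free $\Z$\nbd modules with homology $\Z$ concentrated in degree~$0$, it splits as $\Z[0]\oplus(\text{acyclic})$, whence $[\operatorname{coev}(1)]$ equals $[1\otimes 1^*]$ plus a term in an acyclic summand, hence a generator; similarly for $\operatorname{ev}$. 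The paper takes a slightly different route that avoids passing to homology: it verifies, with a careful accounting of Koszul signs that your sketch elides, that $g\circ f=\id_{t^0q^0\Z[G]}$ already at the level of chain maps over~$\Z[G]$, where $f$ is coevaluation and $g=\chi(C)^{-1}\cdot(\text{signed evaluation})$; see \cref{lem:inverse}. The knot-like hypothesis enters only through $\chi(C)=\chi(C\otimes Z)=\chi(H(C\otimes Z))=1$. This produces an honest direct summand $t^0q^0\Z[G]\subset C\otimes C^*$, and \cref{lem:summand}(ii) then gives $C\otimes C^*\sim t^0q^0\Z[G]$.

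Second, your argument for concordance invariance invokes functoriality of cobordism maps and a neck-cutting relation, neither of which appears among the seven axioms of \cref{sec:axioms}. This detour is unnecessary: axiom \eqref{axiom:cobo} already asserts that a genus\nbd $g$ cobordism yields a chain map of degree $-2g$ that is a $Z$\nbd isomorphism. Applied once to a concordance~$\Sigma$ and once to its reverse~$\bar\Sigma$, it gives $Z$\nbd isomorphisms of degree~$0$ in both directions, so $C(D_0)\sim C(D_1)$ immediately---no composition, functoriality, or neck-cutting is required. The paper treats this step, together with additivity from axiom \eqref{axiom:sum} and the neutral element from axiom \eqref{axiom:unknot}, as essentially automatic.
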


\begin{theorem}\label{thm:stronger}
For all knots $K$, the Rasmussen invariants $s_c(K)$ for $c$ zero or a prime,
as well as Sch\"utz's invariant $s^{\Z}(K)$, are determined by $S(K)$.
\end{theorem}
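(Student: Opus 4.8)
The plan is to show that each of $s_c(K)$ (for $c$ zero or a prime) and Sch\"utz's $s^{\Z}(K)$ is determined by the $Z$\nbd equivalence class of $C(D)$, which by \cref{thm:construction} is exactly $S(K)$. Both invariants are produced from $C(D)$ by homological algebra: $s_c(K)$ from the graded $\mathbb{F}[G]$-module $H_\ast\bigl(C(D)\otimes_{\Z[G]}\mathbb{F}[G]\bigr)$ for $\mathbb{F}$ a field of characteristic $c$, and $s^{\Z}(K)$ from the graded $\Z[G]$-module $H_\ast(C(D))$ together with its torsion. Hence both are, by construction, invariant under graded chain homotopy equivalence of $C(D)$ over $\Z[G]$, and what remains is invariance under the other generating move of $Z$\nbd equivalence: passing from a knot-like complex $C$ to $C\oplus A$, where $A$ is a bounded complex of finitely generated free graded $\Z[G]$-modules whose homology is annihilated by a power of $G$ (equivalently, $A\otimes_{\Z[G]}\Z[G,G^{-1}]$ is acyclic). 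So everything reduces to checking $s_c(C\oplus A)=s_c(C)$ and $s^{\Z}(C\oplus A)=s^{\Z}(C)$.

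I would treat $s_c$ first, passing to the principal ideal domain $\mathbb{F}[G]$, over which $s_c(K)$ is, up to the usual normalization, the quantum degree of a generator of the (rank one) free part of $H_\ast\bigl(C(D)\otimes_{\Z[G]}\mathbb{F}[G]\bigr)$. The claim is that $A\otimes_{\Z[G]}\mathbb{F}[G]$ contributes only torsion to this homology. Indeed, localization at $G$ is exact, so $H_\ast\bigl(A\otimes_{\Z[G]}\Z[G,G^{-1}]\bigr)=H_\ast(A)\otimes_{\Z[G]}\Z[G,G^{-1}]=0$; thus the bounded complex $A\otimes_{\Z[G]}\Z[G,G^{-1}]$ of finitely generated free modules is acyclic, hence contractible; contractibility is preserved by the base change to $\mathbb{F}[G,G^{-1}]$, so the finitely generated graded $\mathbb{F}[G]$-module $H_\ast\bigl(A\otimes_{\Z[G]}\mathbb{F}[G]\bigr)$ vanishes after inverting $G$, hence is annihilated by a power of $G$ and has no free summand. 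Therefore $H_\ast\bigl((C\oplus A)\otimes_{\Z[G]}\mathbb{F}[G]\bigr)$ and $H_\ast\bigl(C\otimes_{\Z[G]}\mathbb{F}[G]\bigr)$ have the same free part, so $s_c(C\oplus A)=s_c(C)$. Put differently, $-\otimes_{\Z[G]}\mathbb{F}[G]$ descends to a group homomorphism out of $\mathcal{G}$ through which $s_c$ factors; only the invariance just shown is needed.

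For $s^{\Z}$ the same scheme applies, now over $\Z[G]$ itself, which is not a principal ideal domain, and here the real work lies. Sch\"utz's invariant, being produced by homological algebra from $C(D)$, is already invariant under graded chain homotopy equivalence, so it remains to prove $s^{\Z}(C\oplus A)=s^{\Z}(C)$. The point is that $s^{\Z}$ depends on $H_\ast(C(D))$ only through the way its torsion is entangled with the free part: over $\Z[G]$ the torsion submodule need not split off, and $s^{\Z}$ measures quantum degrees carrying this non-split torsion, whereas a direct summand $A$ with homology annihilated by a power of $G$ introduces torsion that splits off and is unlinked from the free part, hence leaves these data untouched. Making this precise means inspecting Sch\"utz's definition to confirm that it factors through the relevant part of $H_\ast(C(D))$; alternatively, one can invoke the structural description of knot-like complexes modulo $Z$\nbd equivalence used to establish \cref{thm:construction} and compare the resulting staircase data with the definition of $s^{\Z}$.

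I expect this last step to be the main obstacle. For $s_c$, passing to the principal ideal domain $\mathbb{F}[G]$ makes the structure transparent, exactly as in the known proof that $s_c$ is a concordance homomorphism; over $\Z[G]$ there is no such luxury, the torsion of integral Khovanov homology is genuinely intricate, only part of it is split off by $Z$\nbd equivalence, and one must pin down that Sch\"utz's extraction detects exactly the part of that torsion which survives $Z$\nbd equivalence, and nothing finer. A secondary, routine point is to confirm that the grading shifts normalizing $C(D)$ from a diagram are already absorbed into $S$, so that ``determined by $S(K)$'' holds on the nose rather than up to an overall shift.
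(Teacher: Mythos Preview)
Your framework rests on an unproven structural claim: that $Z$\nbd equivalence is generated by graded chain homotopy equivalence together with the move $C \mapsto C\oplus A$ for $A$ with $G$\nbd power\nbd torsion homology. The paper never establishes such a characterization, and it is not obvious. By definition, $C\sim C'$ only means there exist degree\nbd zero homogeneous chain maps $f\colon C\to C'$ and $g\colon C'\to C$ each inducing an isomorphism on $H_0(\,\cdot\,\otimes Z)$; there is no reason these should factor through a common complex of the form $C\oplus A\simeq C'\oplus A'$. (\cref{lem:summand} lets you strip summands, but does not say that two $Z$\nbd equivalent indecomposable complexes are homotopy equivalent, and indeed decompositions into indecomposables are not unique over $\Z[G]$.) So the reduction ``everything reduces to checking $s_c(C\oplus A)=s_c(C)$ and $s^{\Z}(C\oplus A)=s^{\Z}(C)$'' is not justified.

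Separately, your description of $s^{\Z}$ as extracted from ``the graded $\Z[G]$\nbd module $H_\ast(C(D))$ together with its torsion'' is not how the invariant is set up here: $s^{\Z}$ is defined via the quantum filtration on $H_0(C(D)\otimes Z)\cong\Z$. This matters, because once you use the correct definition the argument is immediate and requires no structure theory at all. The paper simply observes that a degree\nbd zero homogeneous chain map is automatically filtered of degree~$0$ on $C(D)\otimes Z$, hence the given $Z$\nbd isomorphisms $f,g$ induce mutually inverse filtered isomorphisms on $H_0(\,\cdot\,\otimes Z)$, and $s^{\Z}$ is read off from that filtration. For $s_c$ the paper is equally direct: tensor the degree\nbd zero $Z$\nbd isomorphism with $\mathbb{F}[G]$, look at the induced map between the unique rank\nbd one summands $t^0q^{s_c(K)}\mathbb{F}[G]$ and $t^0q^{s_c(J)}\mathbb{F}[G]$ from \cref{prop:structureF}, and note it is nonzero (by \cref{lemma:knotlikefield}), forcing $s_c(J)\ge s_c(K)$; then swap roles. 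In both cases the point you identify as ``the main obstacle'' dissolves once you work directly with the maps $f,g$ rather than trying to decompose the equivalence relation into moves.
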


\begin{figure}[t]%
\includegraphics{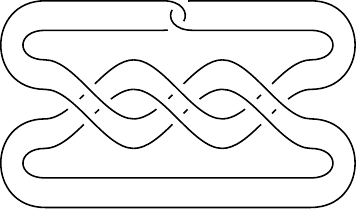}\\[5mm]
\includegraphics{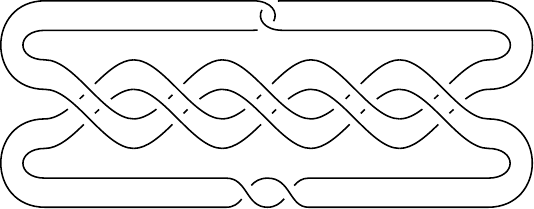}
\caption{The first two knots in \cref{conj:whitehead}:
$W^+_3(T_{2,3})$ and $W^+_6(T_{2,5})$.}
\end{figure}
To demonstrate that $S$ is stronger than all the Rasmussen invariants $s_c$ combined,
we turn to Whitehead doubles,
which seem to be particularly suitable to distinguish knot homology concordance homomorphisms~\cite{heddord,postcard,schuetz,LZ22}.
Let us denote by $W^+_m(K)$ the positively clasped $m$\nbd twisted Whitehead double of the knot $K$.
The following conjecture is inspired by and in accordance with~\cite[Conjecture~6.1]{schuetz},
and has been checked by computer calculation using \cite{khoca,homca} for $1\leq n\leq 12$~\cite{szgit}.
\begin{conjecture}\label{conj:whitehead}
For $n \geq 1$, $S(W^+_{3n}(T_{2,2n+1}))$ is represented by
\[
\stair_{(2^n)} \coloneqq
\begin{tikzcd}
q^{2}t^0 \Z[G] \ar[r,"2^n"]\ar[d,phantom,"\oplus"] & q^2t^1 \Z[G]. \\
q^{0}t^0 \Z[G] \ar[ru,"G"]
\end{tikzcd}
\]
\end{conjecture}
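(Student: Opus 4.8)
To establish \cref{conj:whitehead}, the plan is to compute the universal Khovanov chain complex $C(D)$ of an explicit diagram $D$ of $W^+_{3n}(T_{2,2n+1})$, simplify it up to chain homotopy --- hence up to $Z$\nbd equivalence, possibly after discarding $Z$\nbd equivalent acyclic summands --- to a small complex, and match that complex with $\stair_{(2^n)}$. Since $S$ depends only on the seven axioms of \cref{sec:axioms}, the available tools are the skein long exact sequences, the behaviour of $C$ under disjoint union and connected sum, and the standard simplification moves (delooping and Gaussian elimination) underlying every explicit computation of Bar-Natan-type homology.

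The first step is to unwind the satellite. The positively clasped $m$\nbd twisted Whitehead pattern is a tangle in the annulus, and resolving its two clasp crossings expresses $C(W^+_m(K))$, for any knot $K$, as an iterated mapping cone assembled from the universal complexes of the $2$\nbd strand cables of $K$ (with twisting governed by $m$) together with that of the unknot. Specialising to $K=T_{2,2n+1}$ reduces the problem to the universal complexes of the $2$\nbd cables of the $(2,2n+1)$\nbd torus knot. These are within reach because both $T_{2,2n+1}$ and its $2$\nbd cables are built by stacking blocks of parallel twists, and the universal complex of such a block is chain homotopy equivalent to an explicit zig-zag (a one-sided staircase). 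Performing the delooping and Gaussian-elimination moves on the resulting diagram, while keeping track of the quantum and homological gradings, should collapse everything to a complex supported in the three bidegrees $q^0t^0$, $q^2t^0$, $q^2t^1$, with a diagonal differential that the rational computation forces to be a unit times $G$ and an off-diagonal differential equal to some nonzero integer $N$ --- precisely the shape of $\stair_{(2^n)}$.

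The crux, and the step I expect to be the main obstacle, is to identify $N$ as \emph{exactly} $2^n$. After tensoring with $\Q[G]$, or with $\mathbb{F}_p[G]$ for an odd prime $p$, the off-diagonal arrow becomes invertible, so $N$ is invisible to all the Rasmussen invariants $s_c$; it can only be extracted from a genuinely integral computation in which the connecting maps of the skein triangles are tracked with their full $\Z[G]$\nbd module structure intact. The most robust approach to $N$ is probably to carry out the computation for a larger family --- the minimal complex of $W^+_m(T_{2,2k+1})$ for general $m$ and $k$, or at least along the line $m=3k$ --- via a recursion in which inserting one full twist in a controlled position demonstrably contributes exactly one factor of $2$, with the base case $n=1$ (the double $W^+_3$ of the trefoil, pictured above) checked directly. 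A fallback is to prove first only that $S(W^+_{3n}(T_{2,2n+1}))$ is represented by $\stair_{(2^a)}$ for \emph{some} $a\geq1$ --- which follows once one knows that the free part of the $\Z[G]$\nbd homology is $\Z[G]$ generated in bidegree $q^0t^0$ (equivalently, $s_c=0$ for $c$ zero or an odd prime and $s_2=2$) and that all torsion is $2$\nbd primary and concentrated in bidegree $q^2t^1$ --- and then to pin down $a=n$ by a degree and parity count, or against the computer data for small $n$. Either way, the delicate point is ruling out stray units, signs, and secondary cancellations in the integral bookkeeping.

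It then remains to verify that the small complex obtained really is knot-like in the sense of \cref{def:knotlike}, and to compute its $Z$\nbd equivalence class and confirm that it coincides with that of $\stair_{(2^n)}$. As a consistency check, \cref{thm:stronger} forces $s_0(W^+_{3n}(T_{2,2n+1}))=0$, $s_p=0$ for odd primes $p$, $s_2=2$, and Sch\"utz's $s^{\Z}$ to equal the tuple read off from $\stair_{(2^n)}$ --- all in agreement with \cite[Conjecture~6.1]{schuetz} and with the computer data for small $n$.
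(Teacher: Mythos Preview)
The statement you are attempting to prove is \cref{conj:whitehead}, which in the paper is a \emph{conjecture}, not a theorem. The paper does not supply a proof; it only reports that the conjecture has been verified by computer calculation (using \cite{khoca,homca}) for $1\leq n\leq 12$. So there is no paper proof to compare your proposal against.

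As to your proposal itself: it is an honest outline of a plausible strategy, but it is not a proof, and you correctly identify the gap yourself. The skeleton --- resolve the clasp, reduce to cables of $T_{2,2n+1}$, simplify via delooping and Gaussian elimination --- is the standard route for any explicit Bar-Natan computation, and it is exactly what the computer does for each fixed~$n$. The difficulty is turning this into a uniform argument for all~$n$. Your proposed recursion ``inserting one full twist contributes exactly one factor of $2$'' is the right shape of statement to aim for, but you have not indicated what quantity is being tracked through the recursion, why the cancellation pattern stabilises, or why no stray $2$'s enter or leave. The fallback you mention (show it is $\stair_{(2^a)}$ for some $a$, then pin down $a$) does not obviously help: knowing $s_2=2$ and $s_c=0$ for $c\neq 2$ only tells you $a\geq 1$, and ``degree and parity count'' is not a method without further elaboration. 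In short, your proposal is a reasonable research plan for attacking an open problem, but it does not close the gap that keeps \cref{conj:whitehead} a conjecture.
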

If this conjecture holds, then on the one hand for all $n\geq 1$ one finds $s_2(W^+_{3n}(T_{2,2n+1})) = 2$
and $s_c(W^+_{3n}(T_{2,2n+1})) = 0$ for any $c$ equal to zero or an odd prime
(compare~\cite[Conjecture~6.9]{LZ22}); but on the other hand,
then $S$ can be used to show linear independence of these knots in the smooth concordance group $\mathcal{C}$:
\begin{theorem}\label{thm:kernel}\mbox{}

\begin{enumerate}[label=(\roman*)]
\item The complexes $\stair_{(2^n)}$ for $n\geq 1$ form an infinite linearly independent family in~$\mathcal{G}$.
\item If \cref{conj:whitehead} holds,
the knots $W^+_{3n}(T_{2,2n+1})$ for $n\geq 1$
are sent by $S$ to linearly independent elements of~$\mathcal{G}$,
and hence these knots are the basis of a $\Z^{\oplus\infty}$ subgroup of~$\mathcal{C}$.
\item There exist knots $J_n$ with trivial $S(J_n)$ such that
the knots $K_n\coloneqq W^+_{3n}(T_{2,2n+1}) \# J_n$ are topologically slice.
Hence, under the assumption that \cref{conj:whitehead} holds,
the $K_n$ are a basis of a $\Z^{\oplus\infty}$ subgroup contained in the subgroup of topologically slice knots in $\mathcal{C}$.
\end{enumerate}
\end{theorem}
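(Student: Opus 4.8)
For part~(i), the task is to detect the classes $[\stair_{(2^n)}]$, and the point is that the Rasmussen invariants --- the obvious homomorphisms $\mathcal{G}\to\Z$ --- cannot do so: straight from the two-arrow presentation one computes $s_2(\stair_{(2^n)})=2$ and $s_c(\stair_{(2^n)})=0$ for $c$ zero or an odd prime, independently of $n$. What distinguishes the classes is the way the $G$\nbd torsion of the homology meets the $2$\nbd adic structure of $\Z[G]$: the homology of $\stair_{(2^n)}$ carries a $\Z/2^{n}$ summand. The plan is therefore to build, for each $k\geq 1$, an invariant $\varphi_k$ of $Z$\nbd equivalence classes that reads off from a minimal (reduced) representative a signed count of the $G$\nbd torsion generators of $2$\nbd adic ``length'' at least $k$, analogously to the invariants used to tell apart local equivalence classes of knot Floer complexes. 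Two things must be verified: that $\varphi_k$ descends to $\mathcal{G}$ (the standard local-equivalence argument, so that it is unaffected by the complexes one is allowed to add or cancel), and that it is additive under tensor product --- a K\"unneth-type computation over $\Z_{(2)}[G]$ which is the technical heart, since the naive count is corrupted by $\operatorname{Tor}$ terms and the signs must be arranged so these cancel. Granting this, $\varphi_k(\stair_{(2^n)})\neq 0$ exactly when $1\leq k\leq n$; so from a hypothetical relation $\sum_i k_i[\stair_{(2^i)}]=0$ with $N$ the largest index with $k_N\neq 0$, applying $\varphi_N$ forces $k_N=0$, a contradiction. (Equivalently, one may set up a normal form for staircase complexes up to $Z$\nbd equivalence and check directly that the relevant tensor products are not $Z$\nbd trivial; this is the same bookkeeping in other clothes.)

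Part~(ii) is then immediate: granting \cref{conj:whitehead}, $S(W^+_{3n}(T_{2,2n+1}))=[\stair_{(2^n)}]$ by the definition of $S$, and $S$ is a homomorphism by \cref{thm:construction}, so the independence from part~(i) transports to the knots $W^+_{3n}(T_{2,2n+1})$, which accordingly span a $\Z^{\oplus\infty}$ in $\mathcal{C}$.

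For part~(iii), the key reduction is that it suffices to exhibit, for each $n$, a \emph{topologically slice} knot $K_n$ with $S(K_n)=S(W^+_{3n}(T_{2,2n+1}))$: then $J_n\coloneqq K_n\#-W^+_{3n}(T_{2,2n+1})$ has $S(J_n)$ trivial (as $S$ is a homomorphism), while $W^+_{3n}(T_{2,2n+1})\#J_n$ is smoothly concordant to $K_n$, hence topologically slice; the independence statement then follows from part~(i) as in part~(ii), now inside the topologically slice subgroup. To construct $K_n$ I would begin with $W^+_{3n}(T_{2,2n+1})\#-W^+_{3n}(U)$, where $W^+_{3n}(U)$ is the twist knot obtained by doubling the unknot. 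This has the same $S$ as $W^+_{3n}(T_{2,2n+1})$, because $W^+_{3n}(U)$ is alternating with vanishing signature (the symmetrized Seifert form of a positively clasped twisted double is indefinite) and hence $Z$\nbd trivial; and it is algebraically slice, because $W^+_{3n}(T_{2,2n+1})$ and $W^+_{3n}(U)$ have the same algebraic concordance class (the Seifert form of a twisted double is independent of the companion). One then kills the surviving topological obstructions via Litherland's satellite formula for Casson--Gordon invariants --- which vanish here because the Whitehead pattern has winding number zero --- together with Freedman's disk embedding theorem, possibly after a further connected sum with a classical correction knot. The main obstacle is exactly this last step: identifying that correction knot, i.e.\ realizing the residual higher-order concordance obstruction of $W^+_{3n}(T_{2,2n+1})$ by a knot with trivial $S$. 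This is the one genuinely topological ingredient of the argument and is the delicate point.
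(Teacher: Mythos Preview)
Your treatment of part~(ii) is correct and matches the paper.

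For part~(i), your proposal is a plan rather than a proof: you postulate homomorphisms $\varphi_k\colon\mathcal{G}\to\Z$ and assert that their additivity under tensor product follows from a K\"unneth computation, but you do not carry this out and you identify it yourself as the technical heart. Your parenthetical alternative --- a normal form for tensor products of staircases --- is closer to what the paper actually does, but is likewise unexecuted. The paper's route is direct and elementary. Explicit basis changes show $\stair_{(a_1,\ldots,a_n)} \sim \stair_{(a_1)}\otimes\cdots\otimes\stair_{(a_n)}$ and $\stair_{(ab)}\sim\stair_{(a)}\otimes\stair_{(b)}$ for coprime $a,b$, reducing the question to proving $\stair_A\not\sim\stair_B$ for distinct tuples $A,B$ in normal form (entries ${}\neq 1$, each dividing the next). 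For that one writes down the unique (up to sign) degree-zero cycle $\alpha=(G^n,-a_1G^{n-1},\ldots,(-1)^n\prod_i a_i)$ generating $H_0(\stair_A\otimes Z)$, notes that any degree-zero homogeneous map $f\colon\stair_A\to\stair_B$ is upper-triangular in the obvious bases, and concludes that the $(k{+}1)$st entry of $f(\alpha)$ is divisible by $a_1\cdots a_k$; choosing $k$ to be the first index where $a_k\nmid b_k$ forces $f(\alpha)\neq\pm\beta$, so $f$ is not a $Z$-isomorphism. No auxiliary invariants or additivity arguments are needed.

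For part~(iii) there is a genuine gap. Your reduction to finding a topologically slice $K_n$ with $S(K_n)=[\stair_{(2^n)}]$ is fine, and your candidate $W^+_{3n}(T_{2,2n+1})\#{-W^+_{3n}(U)}$ is algebraically slice with the right $S$. But algebraic sliceness together with vanishing Casson--Gordon invariants does not imply topological sliceness: Freedman's theorem produces a topological slice disk from trivial Alexander polynomial, not from the vanishing of any finite list of concordance obstructions, and beyond Casson--Gordon lies the infinite Cochran--Orr--Teichner filtration. You concede this is ``the delicate point'' and offer no mechanism to produce the required correction knot; as written the argument does not close.

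The paper avoids all of this with a one-line appeal to Borodzik--Feller: every topological concordance class contains a strongly quasipositive (hence squeezed) knot. Take $J'_n$ strongly quasipositive and topologically concordant to $-W^+_{3n}(T_{2,2n+1})$; take $J''_n$ a connected sum of copies of the pretzel $P(3,-5,-7)$ or its mirror (strongly quasipositive, Alexander polynomial one, hence topologically slice) arranged so that $s_0(J''_n)=-s_0(J'_n)$. Then $J_n\coloneqq J'_n\#J''_n$ is squeezed with $s_0(J_n)=0$, so $S(J_n)$ is trivial by the paper's result that $S$ is represented by $t^0q^{s_0}\Z[G]$ on squeezed knots; and $W^+_{3n}(T_{2,2n+1})\#J_n$ is topologically slice by construction.
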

While this paper was being written,
Dunfield--Lipshitz--Sch\"utz
independently constructed a similar smooth concordance homomorphism,
which moreover takes odd Khovanov homology into account~\cite{2312.09114}.

\subsection*{Structure of the paper}%
\setcounter{tocdepth}{1}%
\noindent\begin{multicols}{2}%
\footnotesize%
\noindent\tableofcontents%
\end{multicols}%
\cref{sec:prelim} goes over preliminaries regarding graded rings, modules and chain complexes. Experts may skip this section or just glance at it to know which conventions are used in this paper.
\cref{sec:axioms} lists seven axioms for Khovanov homology, which are sufficient to construct (though not compute) our new homomorphism.
\cref{sec:glimpse}, which may be skipped by novices, explains what version of Khovanov homology we use and why it satisfies the axioms.
In \cref{sec:construction}, the Rasmussen invariants of fields are constructed from the axioms. This is a warm-up section without new results.
The construction of our new homomorphism $S$ and the proof of \cref{thm:construction} are contained in \cref{sec:rasint}.
The subsequent \cref{sec:calc} is concerned with calculations of $S$ and the proof of \cref{thm:kernel}(i) and (ii).
The final \cref{sec:further}
explores the relationship between $S$ and Sch\"utz's invariant $s^{\Z}$,
a bound for the slice genus given by $S$, and $S$ being a squeezedness obstructions, which leads to the proof of \cref{thm:kernel}(iii).

\subsection*{Acknowledgments}
The author warmly thanks Claudius Zibrowius and Daniel Sch\"appi for fruitful discussions.
As is often the case~\cite{kleist}, this project advanced most when talks about it were given.
So the author thanks Giuseppe Bargagnati, Filippo Bianchi, Giovanni Italiano and Alice Merz
for inviting him to give a mini-course on Khovanov homology at 
the Baby geometri seminar in Pisa, and 
Andrew Lobb, Maggie Miller, and Arunima Ray for inviting him to
the Oberwolfach Workshop \emph{Morphisms in Low Dimensions}~\cite{zbMATH07753245}
(the extended abstract of the author's talk \emph{The joy of not being a PID} at that workshop contains a summary of \cref{thm:construction,thm:stronger}).
Finally, the author gratefully acknowledges the support by the Emmy Noether Programme of the DFG, project no. 412851057,
and thanks Peter Feller for the invitation to the FIM at ETH Z\"urich during April and May 2023.
Further thanks go to Claudius Zibrowius and Laura Marino for comments on a first version of this paper.

\section{Preliminaries on graded rings}\label{sec:prelim}

For an abelian group $A$~(which in this paper will always be $\Z$ or $\Z^2$),
an $A$\nbd\emph{grading} on a ring $R$ (all our rings are commutative and unital)
is a decomposition of the additive abelian group of $R$
as~$\bigoplus_{a\in A} R_a$, such that~$R_aR_b \subset R_{a+b}$.
For a graded ring~$R$, a \emph{grading} on an $R$\nbd module~$M$ is a decomposition of the additive
abelian group of $M$ as~$\bigoplus_{a\in A} M_a$, such that~$R_aM_b \subset M_{a+b}$.
Elements of the $M_i$ are called \emph{homogeneous of grading}~$a$.
Homomorphisms $f\colon M\to N$ of graded $R$\nbd modules are called \emph{homogeneous of degree~$a$}
if $f(M_b) \subset N_{a+b}$, and just \emph{homogeneous} if~$a = 0$.
Note that the grading of a homogeneous element is uniquely determined if the element is non-zero;
the analogue is true for the degree of a homogeneous homomorphism.

Direct sums and tensor products of graded modules inherit a grading in a straight forward way.
The \emph{dual} of a graded $R$\nbd module $M$, denoted by~$M^*$, is defined as $\Hom_R(M,R)$,
with grading given by setting $(M^*)_a$ to be the 
homogeneous homomorphisms $M\to R$ of degree~$a$.
The \emph{dual} of a homomorphism $f\colon M\to N$ is the homomorphism $f^*\colon N^*\to M^*$
given by precomposition with $f$. If $f$ is homogeneous of degree $a$, then $f^*$ is homogeneous of degree $-a$.

The \emph{degree shift by $a\in A$} of a graded $R$\nbd module $M$ is the module $N$ with
$N_{a+b} = M_b$. A graded module $R$\nbd module is called \emph{graded free} if it
has a basis consisting of homogeneous elements, or equivalently, if it is isomorphic
to a sum of degree shifts of $R$.

We will usually call $\Z$\nbd gradings by a variable name, e.g.~$q$\nbd grading.
We then write $\qdeg x \in \Z$ for the grading of a homogeneous non-zero element~$x$ and $\qdeg f$
for the degree of a homogeneous non-zero homomorphism~$f$.
We also write the degree shift of an $R$\nbd module by $n$ as~$q^nM$.
For a graded free module $M$ isomorphic to a finite direct sum~$\bigoplus_{j} q^{a_{j}}R$,
we define the \emph{graded rank} of $M$, denoted $\qrk M$, as the Laurent polynomial~$\sum_j q^{a_j} \in \Z_{\geq 0}[q^{\pm 1}]$.
Note that the evaluation at $q=1$ recovers the ungraded (i.e.~the usual) rank.
The graded rank is well-defined if the $\Z$\nbd graded ring $R$ has no non-trivial homogeneous elements of negative grading
(otherwise, it may e.g.~happen that $q^a R$ and $q^b R$ are isomorphic graded $R$\nbd modules for any~$a,b\in\Z$). Note that for graded free modules $M$ and $N$,
we have $\qrk(M \otimes_R N) = \qrk(M) \cdot \qrk(N)$, $\qrk(M \oplus N) = \qrk(M) + \qrk(N)$ and $\qrk(M^*)(t) = \qrk(M)(t^{-1})$.

In the case of a $\Z^2$\nbd grading, we will usually use two variable names, e.g.~$t$ and $q$.
Then, $\tqdeg \in \Z^2$ and $\tqrk \in \Z_{\geq 0}[q^{\pm 1},t^{\pm 1}]$ are defined similarly as above.

\begin{example}\label{ex:zg}
For our version of Khovanov homology,
we need the $\Z$\nbd graded ring needed~$\Z[G]$,
the integer polynomial ring in one formal variable~$G$,
with a grading called \emph{quantum grading} or $q$\nbd grading.
Note that setting $\qdeg G = -2$ determines $\qdeg$ completely.
The map $f\colon q^a\Z[G]\longrightarrow q^b\Z[G]$ given by multiplication with
$mG^c$ for $m,c\in\Z, m\neq 0, c\geq 0$ satisfies
\[
\qdeg f = -a + b - 2c,
\]
since it sends the homogeneous element $1\in q^a\Z[G]$ with $\qdeg 1 = a$
to the homogeneous element $mG^c\in q^b\Z[G]$ with~$\qdeg mG^c = b - 2c$.
This formula for $\qdeg f$ will often be implicitly used throughout the text.
\end{example}

For a $\Z$\nbd graded ring $R$, a \emph{grading} on a chain complex $(C,d)$ over $R$ is a grading of its chain modules~$C_i$,
such that the differential $d$ (which by our convention goes from $C_i$ to $C_{i+1}$) is a homogeneous map.
We will take the following alternative (but equivalent) perspective.
Denote the grading on $R$ by $q$, and endow $R$ with another $\Z$\nbd grading, denoted by $t$,
by putting all of $R$ in $t$\nbd degree $0$. In total, $R$ has become a $\Z^2$\nbd graded ring.
Now, a graded chain complex $C$ is just a graded $R$\nbd module
with a homogeneous endomorphism $d$ of $tq$\nbd degree $(1,0)$ and $d^2 = 0$.
If $C$ is graded free, then its \emph{total rank} is just defined as its graded rank.
A \emph{homogeneous chain map of degree $a$} $f\colon C\to C'$ between graded chain complexes $C, C'$
is a homogeneous homomorphism of $tq$\nbd degree $(0,a)$ such that $f\circ d_C = d_{C'}\circ f$.
Similarly, a \emph{homogeneous homotopy of degree $a$} $h\colon C\to C'$ 
is a homogeneous homomorphism of $tq$\nbd degree $(-1,a)$.
Two homogeneous chain maps $f,g\colon C\to C'$ of arbitrary degrees
are called \emph{homogeneously homotopic}, denoted by $f\simeq g$,
if there exists a homogeneous homotopy $h$ such that $f - g \simeq h\circ d_C + d_{C'}\circ h$.

The \emph{dual} of a graded chain complex $(C,d)$ is the graded chain complex $(C^*, d^*)$
with $C^*_i = (C_{-i})^*$ and $d^*\colon C^*_i \to C^*_{i+1}$ equal to the dual of $d\colon C_{-i-1}\to C_{-i}$.
Tensor products of graded chain complexes $C, C'$ over $R$ are defined as
$(C\otimes C')_i = \sum_{j+k=i} C_j\otimes C'_k$, with differential
(obeying the so-called Koszul sign convention)
\[
d_{C\otimes C'}(x\otimes y) = d_C(x)\otimes y + (-1)^{\tdeg x} x\otimes d_{C'}(y)
\]
for homogeneous elements $x\in C, y\in C'$.

We write $H(C)$ for the total homology of $C$, which, just as $C$, we see as a $\Z^2$\nbd graded $R$\nbd module.
If $M$ is $q$\nbd graded, then $H(C\otimes_R M)$ is $tq$\nbd graded;
if $M$ is not graded, then $H(C\otimes_R M)$ is just $t$\nbd graded.

\section{Khovanov homology as a black box}\label{sec:axioms}
We use a certain `universal' version of Khovanov homology,
which associates with every diagram $D$ of an oriented knot $K$ a graded chain complex $C(D)$ over the graded ring~$\Z[G]$ introduced in \cref{ex:zg}.
Let us now list some properties of this homology theory.
These properties do not determine the homology of any knot except the unknot.
Rather, they are `minimal requirements' 
for the construction of the Rasmussen invariants and our new invariant.
Here and throughout the text,
tensor products are taken over~$\Z[G]$ (unless indicated otherwise),
and $Z$ denotes the
ungraded $\Z[G]$\nbd module~$\Z[G]/(G-1)$.%
\medskip

\begin{enumerate}
\item \label{axiom:admissible} $C(D)$ consists of graded free modules with even degree shifts, and $C(D)$ is of finite total rank.
\item \label{axiom:knot} $H_i(C(D)\otimes Z)$ is isomorphic to $Z$ for $i=0$ and trivial for~$i\neq 0$.
\item \label{axiom:rm} If the diagrams $D_1$ and $D_2$ are related by a Reidemeister move, then
$C(D_1)$ and $C(D_2)$ are homogeneously homotopy equivalent.
\item \label{axiom:unknot} For $D$ the crossingless diagram,~$C(D) = t^0q^0\Z[G]$.
\item \label{axiom:sum} $C(D \# D')$ is isomorphic to~$C(D) \otimes C(D')$
\item \label{axiom:mirror} $C(-D)$ is isomorphic to the dual~$C(D)^*$ of $C(D)$.
\item \label{axiom:cobo} Let $K_1, K_2$ be knots with respective diagrams $D_1, D_2$.
If there exists a smooth connected cobordism of genus $g$ between $K_1$ and~$K_2$,
then there exists a homogeneous chain map $C(D_1)\to C(D_2)$
of quantum degree~$-2g$ that induces an isomorphism $H_0(C(D_1)\otimes Z) \to H_0(C(D_2)\otimes Z)$.
\end{enumerate}
\smallskip

\begin{definition}\label{def:knotlike}
We call a graded chain complex $C(D)$ over the graded ring $\Z[G]$
\emph{admissible} if it satisfies \eqref{axiom:admissible},
and \emph{knot-like} if it satisfies \eqref{axiom:admissible} and \eqref{axiom:knot}.
A homogeneous chain map inducing an isomorphism on $H_0(\,\cdot\,\otimes Z)$,
such as the one in \eqref{axiom:cobo}, will be called a \emph{$Z$\nbd isomorphism}.
\end{definition}

\section{A glimpse into the black box}\label{sec:glimpse}

The version of Khovanov homology that we are using here goes back to Naot~\cite{naotphd,zbMATH05118580}. It is also discussed at length in \cite[Section~3]{KWZ} and \cite[Section~2]{ilm21}.
Here, we will only briefly summarize how $C(D)$ may be constructed and why (1)--(7) hold.
To follow this subsection, some knowledge of the technicalities of Khovanov homology is required, in particular the papers~\cite{bncob,zbMATH05039154}. The non-expert reader is invited to skip to~\cref{sec:construction}.

There are two ways to construct the chain complex $C(D)$ for a given knot diagram $D$.
The first way is to cut $D$ open at a point away from the crossings, to obtain a 2-ended tangle diagram~$T$. Bar-Natan's construction \cite{bncob} associates with $T$ a chain complex over a certain category $\operatorname{Kob}$, whose objects are formal sums of grading shifted crossingless 2-ended tangles, and whose morphisms are matrices of $\Z$\nbd linear combinations of cobordisms up to isotopy, modulo certain local relations. As it turns out, the category $\operatorname{Kob}$ is equivalent to the category of graded free modules of finite rank over~$\Z[G]$, and homogeneous homomorphisms~\cite[Proposition~2.8]{ilm21}.
Under this equivalence the module $\Z[G]$ corresponds to a crossingless circleless 2-ended tangle (i.e.~simply an interval), multiplication by $G$ corresponds to the connected cobordism of genus one between two copies of this tangle, and Bar-Natan's complex associated with $T$ corresponds to $C(D)$.
The second way to obtain $C(D)$ is as the chain complex coming from the Frobenius algebra $\Z[G,X] / (X^2 + GX)$ over the ring~$\Z[G]$, reduced at the root~$X = 0$.

The properties (1)--(5) of $C(D)$ are known to hold in either of the two interpretations of $C(D)$~\cite{bncob,zbMATH05039154}. Property \eqref{axiom:knot} is e.g.~shown in~\cite{zbMATH07182539}:
$C(D) \otimes Z$ is isomorphic to the chain complex coming from the Frobenius algebra
$\Z[X] / (X^2 + X)$ over $\Z$, which is \emph{diagonalizable}: the basis $(a,b) = (X+1,-X)$
has the property that $a^2=a, b^2=b$ and $ab=0$. Indeed, a cycle whose class generates $H_0(C(D)\otimes Z)$
may be explicitly given. It is supported in a single vertex of the cube of resolutions,
namely the vertex corresponding to the oriented resolution.
Labeling all components of the oriented resolution by $a$ and $b$ such
that the unique interval component is colored by $a$, and any two adjacent components are colored differently yields the desired cycle.

Property (7) requires only a weak form of functoriality; in fact, the stronger statement is true that any smooth cobordism induces a chain map $C(D_1)\to C(D_2)$ whose homogeneous homotopy type up to sign depends only on the isotopy class of the cobordism~\cite{bncob}, and this construction is well-behaved with respect to composition. To see that $H_0(C(D_1)\otimes Z)\to H_0(C(D_2)\otimes Z)$ is an isomorphism, one proves that the cycle generating $H_0(C(D_1)\otimes Z)$ constructed above is sent to $\pm$ the corresponding cycle of $H_0(C(D_2)\otimes Z)$. This may be achieved by checking this statement for each Reidemeister move and each Morse move, similarly as in the proof of \cite[Proposition~4.1]{ras3}.

\section{Rasmussen invariants over fields}\label{sec:construction}
To warm up, let us see how to extract the well-known Rasmussen invariants over fields from our Khovanov chain complex $C(D)$, merely using the properties (1)--(7).
The following `structure theorem of Khovanov homology' goes back to~\cite{zbMATH05039154} and has since appeared in various versions (see e.g.~\cite{morrison},~\cite[Section 3.3.2]{KWZ}).%
\begin{proposition}\label{prop:structureF}
For all fields $\mathbb{F}$ and knot diagrams $D$, the following hold.
\begin{enumerate}[label=(\roman*)]
\item The chain complex $C(D) \otimes \mathbb{F}[G]$ decomposes as a sum
of a complex with trivial homology and
\[
t^0q^{s} \mathbb{F}[G] \quad \oplus\quad
\bigoplus_i t^{a_i} q^{b_i} \mathbb{F}[G] \xrightarrow{G^{c_i}} t^{a_i+1} q^{b_i+2c_i} \mathbb{F}[G],
\]
where $s\in 2\Z$, $a_i\in \Z$, $b_i \in 2\Z$, $c_i \in \Z_{>0}$.
\item The numbers $s, a_i, b_i, c_i$ are knot invariants.
\item For fixed $D$, the numbers $s, a_i, b_i, c_i$ are determined by $\Char\mathbb{F}$.
\end{enumerate}
\end{proposition}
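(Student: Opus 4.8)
The plan is to reduce all three statements to the classical structure theory of bounded complexes of finitely generated free modules over a principal ideal domain, applied to $C(D)\otimes\mathbb{F}[G]$: by~\eqref{axiom:admissible} this is a bounded complex of finitely generated graded free $\mathbb{F}[G]$\nbd modules with even $q$\nbd degree shifts, and $\mathbb{F}[G]$ is a graded PID whose units are the nonzero scalars and whose nonzero homogeneous elements are exactly the scalar multiples of the powers $G^c$, $c\geq 0$ — so $G$ is, up to units, the only homogeneous prime. For (i), I would first run graded Gaussian elimination (a simultaneous graded Smith normal form over the complex, as in e.g.~\cite{morrison}): using homogeneous changes of basis, whose existence relies on the shifts being even, one writes $C(D)\otimes\mathbb{F}[G]$ as a direct sum of elementary complexes, each of which is either a single module $t^aq^b\mathbb{F}[G]$ or a two\nbd step complex with differential a scalar times $G^c$ for some $c\geq 0$. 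In such a two\nbd step piece the two $t$\nbd degrees differ by exactly $1$ because the differential has $tq$\nbd degree $(1,0)$, and then the $q$\nbd degrees are related by $b\mapsto b+2c$ via the formula in~\cref{ex:zg}. The pieces with $c=0$ have invertible differential, hence are acyclic; collecting them gives the `trivial homology' summand, while the pieces with $c\geq1$ are the asserted two\nbd step summands. Evenness of $s$ and the $b_i$ is then inherited from~\eqref{axiom:admissible} by well\nbd definedness of the graded rank over $\mathbb{F}[G]$.

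It remains to pin down the single\nbd module summands. Setting $G=1$ turns every two\nbd step piece into $\mathbb{F}\xrightarrow{1}\mathbb{F}$ and leaves the acyclic summand acyclic, so $\big(C(D)\otimes\mathbb{F}[G]\big)\otimes_{\mathbb{F}[G]}\mathbb{F}[G]/(G-1)$ has homology a sum of copies of $\mathbb{F}$, one per single\nbd module summand, in the corresponding $t$\nbd degrees. On the other hand this complex equals $\big(C(D)\otimes_{\Z[G]}Z\big)\otimes_\Z\mathbb{F}$, whose homology by~\eqref{axiom:knot} and the universal coefficient theorem — with no Tor term, since $H_*(C(D)\otimes Z)\cong Z$ is free — is $\mathbb{F}$ concentrated in $t$\nbd degree $0$. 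Comparing the two computations, there is exactly one single\nbd module summand and it lies in $t$\nbd degree $0$; call its $q$\nbd degree $s$, completing (i).

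For (ii) and (iii) I would exploit that the numbers in (i) are read off from the homology: as a bigraded finitely generated $\mathbb{F}[G]$\nbd module, $H_*\big(C(D)\otimes\mathbb{F}[G]\big)\cong t^0q^s\mathbb{F}[G]\ \oplus\ \bigoplus_i t^{a_i+1}q^{b_i+2c_i}\,\mathbb{F}[G]/(G^{c_i})$, and the classification of finitely generated graded modules over the graded PID $\mathbb{F}[G]$ recovers, uniquely up to reordering, the free part (hence $s$, and that it sits in $t$\nbd degree~$0$) together with the cyclic torsion summands and their gradings (hence the triples $(a_i,b_i,c_i)$). Part (ii) then follows because any two diagrams of $K$ are related by Reidemeister moves, so by~\eqref{axiom:rm} the complexes $C(D)\otimes\mathbb{F}[G]$ are homogeneously homotopy equivalent and hence have isomorphic bigraded homology. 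For (iii), let $k$ be the prime field of characteristic $\Char\mathbb{F}$; then $\mathbb{F}[G]=k[G]\otimes_k\mathbb{F}$, so $C(D)\otimes_{\Z[G]}\mathbb{F}[G]=\big(C(D)\otimes_{\Z[G]}k[G]\big)\otimes_k\mathbb{F}$ and, $\mathbb{F}$ being flat over $k$, $H_*\big(C(D)\otimes\mathbb{F}[G]\big)=H_*\big(C(D)\otimes k[G]\big)\otimes_k\mathbb{F}$. Since $-\otimes_k\mathbb{F}$ sends $k[G]$ to $\mathbb{F}[G]$ and $k[G]/(G^c)$ to $\mathbb{F}[G]/(G^c)$, preserving ranks, torsion orders and gradings, uniqueness of the structure\nbd theorem decomposition shows the numbers computed over $\mathbb{F}$ agree with those over $k$, i.e.\ depend only on $\Char\mathbb{F}$.

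The step needing the most care is the graded normal form underpinning (i) and the graded uniqueness statement used for (ii)–(iii): one has to verify that the reduction can be carried out entirely with homogeneous maps — which is exactly where the evenness of the degree shifts in~\eqref{axiom:admissible} enters — and that the PID structure theorem has the expected graded analogue over $\mathbb{F}[G]$. Both are classical and are contained in the references cited for the proposition, so in the writeup they can largely be quoted rather than reproved.
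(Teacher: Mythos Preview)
Your proposal is correct and follows essentially the same route as the paper: Smith normal form over the graded PID $\mathbb{F}[G]$ for the decomposition, tensoring with $\mathbb{F}[G]/(G-1)$ together with axiom~\eqref{axiom:knot} and the universal coefficient theorem to isolate the unique rank\nbd one summand (the paper packages this step as \cref{lemma:knotlikefield}), reading the numbers off the bigraded homology for~(ii), and passing to the prime subfield for~(iii). One small inaccuracy: the existence of the homogeneous Smith normal form does not actually rely on the shifts being even---that hypothesis is only used to conclude that $s$ and the $b_i$ are even.
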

For the proof of the proposition, we require the following lemma,
which may be paraphrased as: knot-like complexes are `knot-like for all coefficients'.%
\begin{lemma}\label{lemma:knotlikefield}
Let $D$ be a knot diagram and $M$ any abelian group.
Then $H_i(C(D)\otimes M[G]/(G-1))$ is isomorphic to $M[G]/(G-1)$ for $i = 0$
and trivial otherwise.
\end{lemma}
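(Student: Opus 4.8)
The plan is to deduce the statement from Axiom~\eqref{axiom:knot}, which gives the case $M = \Z$, by a universal-coefficients type argument. Recall $Z = \Z[G]/(G-1)$, so that $C(D) \otimes Z$ is a finite chain complex of free abelian groups with $H_i \cong Z = \Z$ for $i=0$ and $H_i = 0$ otherwise. Since $M[G]/(G-1) \cong M$ as an abelian group and this is compatible with the $\Z[G]$-action factoring through $\Z[G]/(G-1) = \Z$, the complex $C(D) \otimes_{\Z[G]} M[G]/(G-1)$ is canonically isomorphic to $(C(D) \otimes Z) \otimes_\Z M$. So the claim is purely about the finite free $\Z$-complex $E \coloneqq C(D) \otimes Z$: knowing $H_0(E) \cong \Z$ and $H_i(E) = 0$ for $i \neq 0$, we must show $H_0(E \otimes_\Z M) \cong M$ and $H_i(E \otimes_\Z M) = 0$ for $i \neq 0$, for every abelian group $M$.

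First I would invoke the universal coefficient theorem for homology: for each $i$ there is a split short exact sequence
\[
0 \to H_i(E) \otimes_\Z M \to H_i(E \otimes_\Z M) \to \operatorname{Tor}^{\Z}_1(H_{i-1}(E), M) \to 0.
\]
(This applies because $E$ is a bounded complex of free, hence flat and projective, $\Z$-modules; the $t$- and $q$-gradings are just bookkeeping and pass through the whole argument unchanged, so I would phrase everything grading by grading.) For $i \neq 0$ both outer terms vanish: $H_i(E) = 0$ kills the left term, and $H_{i-1}(E)$ is either $0$ or — for $i = 1$ — is $\Z$, which is torsion-free, so $\operatorname{Tor}_1^\Z(\Z, M) = 0$ kills the right term. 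Hence $H_i(E \otimes_\Z M) = 0$ for $i \neq 0$. For $i = 0$ the right term is $\operatorname{Tor}_1^\Z(H_{-1}(E), M) = \operatorname{Tor}_1^\Z(0, M) = 0$, so the sequence collapses to $H_0(E \otimes_\Z M) \cong H_0(E) \otimes_\Z M \cong \Z \otimes_\Z M \cong M$. Translating back through the isomorphism $E \otimes_\Z M \cong C(D) \otimes_{\Z[G]} M[G]/(G-1)$ — and checking that the $\Z[G]$-module structure on $H_0$ is indeed the one making it $M[G]/(G-1)$, which is immediate since $G$ acts as $1$ on $Z$ — gives the lemma.

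There is no serious obstacle here; the only point requiring a little care is the bookkeeping of the two gradings, i.e.\ confirming that tensoring with the ungraded module $M$ (or the $q$-graded module $M[G]/(G-1)$) interacts with the $t$- and $q$-gradings in the naive way, so that the universal coefficient sequence can be applied in each bidegree separately. One should also note that $C(D)$ has finite total rank by Axiom~\eqref{axiom:admissible}, so $E$ is genuinely a bounded complex of finitely generated free abelian groups and UCT applies without fuss. Alternatively, if one prefers to avoid citing UCT, one can argue directly: split the bounded free $\Z$-complex $E$ up to chain isomorphism as a direct sum of a contractible complex and the complex $\Z$ concentrated in degree $0$ (possible because $\Z$ is a PID and $H_*(E)$ is free), and then tensoring with $M$ visibly gives $M$ in degree $0$ and a contractible complex elsewhere.
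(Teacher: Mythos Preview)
Your proof is correct and follows essentially the same route as the paper: identify $C(D)\otimes_{\Z[G]} M[G]/(G-1)$ with $(C(D)\otimes Z)\otimes_{\Z} M$, invoke axiom~\eqref{axiom:knot} to know $H_*(C(D)\otimes Z)$ is torsion-free, and apply the universal coefficient theorem. Your explicit identification of the two complexes and the alternative splitting argument are in fact a bit cleaner than the paper's phrasing via the forgetful functor.
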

\begin{proof}
In this proof only, let us denote by $\mathcal{U}$ the forgetful functor from $\Z[G]$\nbd modules to abelian groups, which associates with a $\Z[G]$\nbd module its underlying $\Z$\nbd module.
We have $H_i(\mathcal{U}C(D)) \cong \mathcal{U}H_i(C(D))$. Hence, by \eqref{axiom:knot},
$H_i(\mathcal{U}C(D))$
is trivial for $i \neq 0$ and isomorphic to $\mathcal{U}Z \cong \Z$ for $i = 0$.
Since $H(\mathcal{U}C(D))$ is torsion free, the universal coefficient theorem implies
that $H_i(\mathcal{U}C(D)\otimes_{\Z} M) \cong H_i(\mathcal{U}C(D)) \otimes_{\Z} M$.
Now we conclude:
\begin{align*}
\mathcal{U} H_i(C(D)\otimes M[G]/(G-1)) & \cong 
H_i(\mathcal{U}C(D)\otimes_{\Z}\mathcal{U}M[G]/(G-1)) \\
& \cong H_i(\mathcal{U}C(D)\otimes_{\Z} M) \\
& \cong H_i(\mathcal{U}C(D)) \otimes_{\Z} M,
\end{align*}
which is isomorphic to $M$ for $i = 0$ and trivial otherwise.
Since $G$ operates as $1$ on $H_i(C(D)\otimes M[G]/(G-1))$, the statement follows.
\end{proof}

\begin{proof}[Proof of \cref{prop:structureF}]
\textbf{(i):} The existence of such a decomposition, but allowing arbitrarily many summands of the form $t^u q^s \mathbb{F}[G]$, follows using the Smith normal form, since,  by~(1),
$C(D) \otimes \mathbb{F}[G]$ is a chain complex of free modules over the PID~$\mathbb{F}[G]$ of finite total rank.
Note that each summand $t^u q^s \mathbb{F}[G]$ results in an $\mathbb{F}[G]/(G-1)$--summand in $H_u(C(D)\otimes \mathbb{F}[G]/(G-1))$. But by \cref{lemma:knotlikefield} with $M = \mathbb{F}$, $H_u(C(D)\otimes \mathbb{F}[G]/(G-1))$ is isomorphic to $\mathbb{F}[G]/(G-1)$ if $u = 0$ and trivial otherwise. This implies that there is a unique summand $t^u q^s \mathbb{F}[G]$, and it satisfies $u = 0$.

\textbf{(ii):} By \eqref{axiom:rm}, any two diagrams $D_1, D_2$ of a fixed knot have homogeneously homotopy equivalent complexes $C(D_1) \simeq C(D_2)$, and thus isomorphic graded homologies $H(C(D_1)\otimes \mathbb{F}[G]) \cong H(C(D_2)\otimes \mathbb{F}[G])$. Observe that the numbers $s, a_i, b_i, c_i$ are determined by
$H(C(D_j) \otimes \mathbb{F}[G])$.

\textbf{(iii):} Let $\mathbb{E}$ be the unique minimal subfield of~$\mathbb{F}$.
From $(C(D) \otimes \mathbb{E}[G]) \otimes \mathbb{F}[G] \cong C(D) \otimes \mathbb{F}[G]$
it follows that the numbers $s, a_i, b_i, c_i$ agree for $\mathbb{E}$ and~$\mathbb{F}$.
Now the statement follows since any two fields of the same characteristic
have isomorphic minimal subfields.%
\end{proof}
\begin{definition}
For a knot $K$ with diagram~$D$, the number $s$ of~\cref{prop:structureF}
is called the \emph{Rasmussen invariant over $\mathbb{F}$ of $K$},
denoted by $s_c(K)$ for $c = \Char\mathbb{F}$.
\end{definition}
\begin{proposition}\label{prop:rasmussenF}
The Rasmussen invariant $s_c$ has the following properties:
\begin{enumerate}[label=(\roman*)]
\item $s_c(K_1 \# K_2) = s_c(K_1) + s_c(K_2)$.
\item $s_c(-K) = -s_c(K)$.
\item If there exists a smooth connected cobordism of genus $g$ between the knots $K_1$ and $K_2$,
then $|s_c(K_1) - s_c(K_2)| \leq 2g$.
\end{enumerate}
In particular, $s_c$ induces a homomorphism $\mathcal{C}\to 2\Z$.
\end{proposition}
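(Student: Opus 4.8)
The plan is to deduce all three properties from the axioms \eqref{axiom:sum}, \eqref{axiom:mirror}, \eqref{axiom:cobo} together with the structure theorem \cref{prop:structureF}, by showing that each knot operation translates into the corresponding operation on the distinguished summand $t^0q^s\mathbb{F}[G]$ of $C(D)\otimes\mathbb{F}[G]$. The key observation throughout is that tensoring with $\mathbb{F}[G]$ is exact and symmetric monoidal, so it commutes with the operations appearing in the axioms, and that by \cref{prop:structureF}(i) the summand $t^0q^s\mathbb{F}[G]$ is canonically characterised: it is the unique indecomposable summand on which $G$ acts injectively and whose $t$-degree is $0$ (equivalently, it represents the unique $\mathbb{F}[G]$-free part of $H_0(C(D)\otimes\mathbb{F}[G])$, the rest being $G$-torsion). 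Hence $s$ can be recovered homologically, and this homological description is what will be manipulated under the knot operations.

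First I would prove (i). Pick diagrams $D_1,D_2$ for $K_1,K_2$. By \eqref{axiom:sum}, $C(D_1\#D_2)\cong C(D_1)\otimes C(D_2)$, and tensoring over $\Z[G]$ with $\mathbb{F}[G]$ gives $C(D_1\#D_2)\otimes\mathbb{F}[G] \cong (C(D_1)\otimes\mathbb{F}[G]) \otimes_{\mathbb{F}[G]} (C(D_2)\otimes\mathbb{F}[G])$. Applying \cref{prop:structureF}(i) to each factor, I would expand the tensor product of the two decompositions; the tensor product of the two distinguished summands $t^0q^{s_c(K_1)}\mathbb{F}[G] \otimes_{\mathbb{F}[G]} t^0q^{s_c(K_2)}\mathbb{F}[G] = t^0q^{s_c(K_1)+s_c(K_2)}\mathbb{F}[G]$ is a free summand in $t$-degree $0$, whereas every other cross-term (distinguished tensor staircase, staircase tensor staircase) has either nonzero homology killed by $G$, or contributes only to $H_{\neq 0}$, or has acyclic homology — one checks, e.g., that $\mathbb{F}[G]\xrightarrow{G^c}\mathbb{F}[G]$ tensored with anything free is again a sum of such elementary staircases and a trivial complex. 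By the uniqueness clause of \cref{prop:structureF}(i), the distinguished summand of $C(D_1\#D_2)\otimes\mathbb{F}[G]$ is exactly $t^0q^{s_c(K_1)+s_c(K_2)}\mathbb{F}[G]$, so $s_c(K_1\#K_2)=s_c(K_1)+s_c(K_2)$.

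Next, for (ii), use \eqref{axiom:mirror}: $C(-D)\cong C(D)^*$, so $C(-D)\otimes\mathbb{F}[G]\cong (C(D)\otimes\mathbb{F}[G])^*$ as complexes over $\mathbb{F}[G]$ (dualising a finite free complex over $\mathbb{F}[G]$ commutes with base change). Dualising the decomposition of \cref{prop:structureF}(i) sends a trivial-homology complex to a trivial-homology complex, sends each staircase $q^{b_i}\mathbb{F}[G]\xrightarrow{G^{c_i}}q^{b_i+2c_i}\mathbb{F}[G]$ to another such staircase (with the homological grading negated but still of the staircase shape), and sends $t^0q^s\mathbb{F}[G]$ to $t^0q^{-s}\mathbb{F}[G]$, since $\qrk$ of a dual negates $q$-degrees and the $t$-degree $0$ is fixed. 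Again by uniqueness, $s_c(-K)=-s_c(K)$. For (iii), the cobordism of genus $g$ gives by \eqref{axiom:cobo} a homogeneous chain map $f\colon C(D_1)\to C(D_2)$ of $q$-degree $-2g$ that is a $Z$-isomorphism; tensoring with $\mathbb{F}[G]$ and then with $\mathbb{F}[G]/(G-1)$, $f$ induces an isomorphism on $H_0(\,\cdot\,\otimes\mathbb{F}[G]/(G-1))$ (since $\mathbb{F}[G]/(G-1)=\mathbb{F}$ is a quotient of $\Z[G]/(G-1)=Z$ compatibly, or directly by \cref{lemma:knotlikefield}). Restricting $f\otimes\mathbb{F}[G]$ to the distinguished summands, one gets a map $t^0q^{s_c(K_1)}\mathbb{F}[G]\to t^0q^{s_c(K_2)}\mathbb{F}[G]$ (after projecting away the acyclic and $G$-torsion parts, which cannot interfere with the free part of $H_0$) which becomes an isomorphism mod $(G-1)$; such a map is multiplication by $G^k u$ for some $k\geq 0$ and unit $u$, of $q$-degree $s_c(K_2)-s_c(K_1)-2k \geq -2g$, whence $s_c(K_1)-s_c(K_2)\le 2g$. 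Symmetrising (run the cobordism backwards, or apply the same argument to the reversed map) gives $|s_c(K_1)-s_c(K_2)|\le 2g$.

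The main obstacle I anticipate is the bookkeeping in (i): one must carefully verify that none of the "off-diagonal" cross-terms in the tensor product of the two decompositions contributes a free summand in homological degree $0$. This amounts to checking that a tensor product of an elementary staircase $\mathbb{F}[G]\xrightarrow{G^c}\mathbb{F}[G]$ with either $\mathbb{F}[G]$ (shifted) or another staircase decomposes, over the PID $\mathbb{F}[G]$, into trivial complexes plus staircases whose only homology is $G$-torsion and which live in the right homological degrees — this is a direct Smith-normal-form computation but needs to be done cleanly so that the uniqueness statement of \cref{prop:structureF}(i) can be invoked. The final sentence, that $s_c$ descends to a homomorphism $\mathcal{C}\to 2\Z$, then follows formally: by (iii) with $g=0$, $s_c$ is a concordance invariant; by (i) it is additive; the image lands in $2\Z$ because $s\in2\Z$ in \cref{prop:structureF}.
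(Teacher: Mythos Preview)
Your proposal is correct and follows essentially the same route as the paper: use axioms \eqref{axiom:sum}, \eqref{axiom:mirror}, \eqref{axiom:cobo} to translate the knot operations into operations on $C(D)\otimes\mathbb{F}[G]$, then track the distinguished rank-one summand from \cref{prop:structureF} through each operation. Parts (ii) and (iii) match the paper almost verbatim; in (iii) the paper, like you, looks at the matrix entry $\alpha\colon t^0q^{s_c(K_1)}\mathbb{F}[G]\to t^0q^{s_c(K_2)}\mathbb{F}[G]$, shows it is nonzero via \cref{lemma:knotlikefield}, and reads off the degree inequality.

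The one place your argument is more laborious than necessary is (i). You flag as the ``main obstacle'' the verification that no cross-term (staircase $\otimes$ staircase, etc.) contributes a free rank-one summand in homological degree~$0$. The paper sidesteps this entirely: it simply observes that $t^0q^{s_c(K_1)}\mathbb{F}[G]\otimes t^0q^{s_c(K_2)}\mathbb{F}[G] = t^0q^{s_c(K_1)+s_c(K_2)}\mathbb{F}[G]$ is \emph{a} rank-one summand of $C(D_1\# D_2)\otimes\mathbb{F}[G]$, and then invokes the uniqueness clause of \cref{prop:structureF}(i) (there is exactly one summand of ungraded rank~$1$) to conclude its $q$-degree must be $s_c(K_1\# K_2)$. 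You do not need to decompose the cross-terms at all---the uniqueness statement already does that work, since any rank-one summand contributes a free $\mathbb{F}[G]$ to $H_0$, and $s$ is determined by $H_0$.
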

\begin{proof}
\textbf{(i):} Let $D_i$ be a diagram for $K_i$. By \cref{prop:structureF}, $C(D_i)$ admits a summand with graded rank $t^0q^{s(K_i)}$.
It follows that $C(D_1) \otimes C(D_2)$ admits a summand with graded rank $t^0q^{s(K_1)} \cdot t^0q^{s(K_2)} = t^0 q^{s(K_1) + s(K_2)}$. But by \eqref{axiom:sum}, $C(D_1) \otimes C(D_2) \cong C(D_1 \# D_2)$,
and so this chain complex admits a summand with graded rank $t^0 q^{s(K_1 \# K_2)}$.
Since \cref{prop:structureF} says that there is a unique summand of ungraded rank~$1$,
it follows that $t^0 q^{s(K_1) + s(K_2)} = t^0 q^{s(K_1 \# K_2)}$ and thus $s(K_1) + s(K_2) = s(K_1 \# K_2)$.

\textbf{(ii):} One reasons similarly as in (i). Let $D$ be a diagram of $K$. By \eqref{axiom:mirror}, $C(-D) \cong C(D)^*$. Since the dual of a sum is the sum of the duals, $C(-D)$ admits a summand with graded rank $t^0 q^{-s(K)}$, which must equal $t^0 q^{s(-K)}$ because of the uniqueness of the summand with ungraded rank~$1$.

\textbf{(iii):} If two knots $K_1$ and~$K_2$ with respective diagrams $D_1$ and~$D_2$
are related by a smooth connected cobordism of genus~$g$, then by~\eqref{axiom:cobo},
there exists a $Z$\nbd isomorphism $f\colon C(D_1)\to C(D_2)$
with $\qdeg f = -2g$.
Pick a decomposition as in \cref{prop:structureF} for $C(D_1)\otimes \mathbb{F}[G]$ and $C(D_2)\otimes \mathbb{F}[G]$.
With respect to these decompositions, we may write $f\otimes \mathbb{F}[G]$ as a matrix.
Consider the matrix entry $\alpha$ that is a homomorphism
$t^0q^{s_c(K_1)}\mathbb{F}[G] \to t^0 q^{s_c(K_2)}\mathbb{F}[G]$.
Since $\alpha$ is homogeneous, we have $\alpha = m\cdot G^n$ for some $n\geq 0$ and $m\in\Z$.
By \cref{lemma:knotlikefield}, $f$ also induces an isomorphism 
\[
H_0(C(D_1) \otimes \mathbb{F}[G]/(G-1))\to H_0(C(D_2) \otimes \mathbb{F}[G]/(G-1)),
\]
or equivalently $t^0 s^{s(D_1)} \mathbb{F}[G]/(G-1) \to t^0 s^{s(D_2)} \mathbb{F}[G]/(G-1)$.
Since this isomorphism is induced by $\alpha$, it follows that $\alpha \neq 0$.
Because $\qdeg \alpha = -2g$, we find $-2g = s(D_2) - s(D_1) - 2n$.
And since $n \geq 0$, we have $s(D_1) - s(D_2) \leq 2g$.
By turning the cobordism upside down and switching the roles of $D_1$ and $D_2$,
we also get $s(D_2) - s(D_1) \leq 2g$, and so $|s(K_1) - s(K_2)| \leq 2g$ follows.
\end{proof}
To summarize, the method we used here to extract Rasmussen's invariant over a field was to identify a `special summand' in the direct sum decomposition of the Khovanov chain complex. The Rasmussen invariant is precisely the information encoded by the graded isomorphism type of this summand, which has graded rank $t^0q^{2s_c(K)}$.
In contrast, Rasmussen's original method was to extract the invariant from the limit of the spectral sequence induced by Lee's filtered version of the Khovanov chain complex.
That these two approaches yield the same result can easily be seen from \cref{prop:structureF}.
\section{The new homomorphism \texorpdfstring{$S$}{S}}\label{sec:rasint}%
How best to extract a Rasmussen invariant directly from the chain complex $C(D)$ over $\Z[G]$ without tensoring with a field? Sch\"utz's invariant $s^{\Z}$ may be extracted from the limit of the spectral sequence associated with the filtration on $C(D) \otimes Z$, in analogy to Rasmussen's original method.
Another tempting approach is the following:
decompose $C(D)$ as a sum of indecomposable summands;
it will follow from \cref{lem:summand} below that in any such decomposition there is a unique summand with odd Euler characteristic. Then one might hope that the isomorphism type of this `special summand' is a concordance invariant. However, this seems doubtful since
the decomposition of $C(D)$ as a sum of indecomposables is not essentially unique (see \cite[Remark~3.19]{ilm21}).

Here, we implement a third approach, namely to consider an equivalence relation on chain complexes $C(D)$ that is just coarse enough to guarantee that concordant knots have equivalent chain complexes.
Property \eqref{axiom:cobo} of the complexes $C(D)$ makes it almost obvious how that equivalence relation needs to be defined.
\begin{definition}\label{def:Zeq}
Let us call two admissible chain complexes $C$ and $C'$ \emph{$Z$\nbd equivalent}, denoted by $C\sim C'$, if there exist $Z$\nbd isomorphisms $f\colon C\to C'$ and $g\colon C'\to C$
of quantum degree $0$.
\end{definition}
See \cref{def:knotlike} for the meaning of $Z$\nbd isomorphism.
Let us first show that for all knots $K$, $J$ and all $c$ equal to zero or a prime,
$C(K) \sim C(J)$ implies that $s_c(K) = s_c(J)$. That constitutes the\ldots
\begin{proof}[\ldots first half of the proof of \cref{thm:stronger}]\hspace{-8pt},
which bears some similarity with the proof of \cref{prop:rasmussenF}(iii).
Let $f\colon C(K)\to C(J)$ be a $Z$\nbd isomorphism of quantum degree $0$
and let $\mathbb{F}$ be a field of characteristic $c$.
Tensoring results in a chain map $f\otimes \mathbb{F}\colon C(K)\otimes\mathbb{F}\to C(J)\otimes\mathbb{F}$. 
By \cref{prop:structureF},
the complexes $\mathbb{F}\otimes C(K)$ and $\mathbb{F}\otimes C(J)$
admit summands $t^0 q^{s_c(K)} \mathbb{F}[G]$
and $t^0 q^{s_c(J)} \mathbb{F}[G]$, respectively.
The chain map $f\otimes \mathbb{F}$ induces
a map $\alpha\colon t^0 q^{s_c(K)} \mathbb{F}[G] \to t^0 q^{s_c(J)} \mathbb{F}[G]$,
which is non-zero because,
using \cref{lemma:knotlikefield}, $\alpha$ induces isomorphisms $H(C \otimes \mathbb{F}[G]/(G-1)) \to H(C' \otimes \mathbb{F}[G]/(G-1))$.
From $\alpha\neq 0$ it follows that $s_c(J) \geq s_c(K)$.
Reversing the roles of $K$ and $J$ yields $s_c(K) \geq s_c(J)$.
Overall we find $s_c(J) = s_c(K)$.
\end{proof}
The following lemma shows that in spite of their non-uniqueness,
direct sum decompositions of complexes are still useful in the context of $Z$\nbd equivalence.
Indeed, to specify a $Z$\nbd equivalence class, we will usually give an indecomposable
representative (see e.g.~\cref{def:staircase}, \cref{ex:18}, \cref{ex:staircasetimesdual}).%
\begin{lemma}\label{lem:summand}
Let $C$ be a knot-like complex.
\begin{enumerate}[label=(\roman*)]
\item If $C\sim C' \oplus C''$, then exactly one of $C'$ and $C''$ is knot-like.
\item If $C\sim C' \oplus C''$ and $C'$ is knot-like, then $C\sim C'$.
\item There exists a knot-like complex $C'$ such that $C\sim C'$
and $C'$ is indecomposable, i.e.~not isomorphic to the sum of two non-trivial admissible complexes.
\end{enumerate}
\end{lemma}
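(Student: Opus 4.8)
The plan is to push the whole question forward to the complex of abelian groups $C\otimes Z$ and argue there, exploiting that $Z=\Z[G]/(G-1)$ is just $\Z$ and that, by admissibility, $C\otimes Z$ is a bounded complex of finitely generated free $\Z$-modules. The one structural input I need is that a $Z$-isomorphism $f\colon A\to B$ of admissible complexes becomes, after $\otimes Z$, a quasi-isomorphism of such complexes; for chain maps between knot-like complexes this is precisely the condition in the definition, since there the homology of $\,\cdot\otimes Z$ is concentrated in degree $0$, but pinning it down in the generality needed here is the step I expect to require the most care. Granting it, $Z$-equivalent admissible complexes have isomorphic $Z$-coefficient homology $H_i(\,\cdot\otimes Z)$ in every degree $i$; in particular $Z$-equivalence preserves knot-likeness. (Every complex appearing below is an admissible summand by hypothesis, so graded-freeness never has to be re-examined.)

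For (i): fix $Z$-isomorphisms $f\colon C\to C'\oplus C''$ and $g\colon C'\oplus C''\to C$ of quantum degree $0$. By the above, $H_i((C'\oplus C'')\otimes Z)\cong H_i(C\otimes Z)$ for all $i$, and, $C$ being knot-like, the right side is $Z$ for $i=0$ and trivial otherwise. Splitting the left side along the direct sum gives $H_i(C'\otimes Z)=H_i(C''\otimes Z)=0$ for $i\neq 0$ and $H_0(C'\otimes Z)\oplus H_0(C''\otimes Z)\cong Z$. Both of these $H_0$'s are finitely generated abelian groups, and a finitely generated abelian group that is isomorphic to $\Z$ and splits as a direct sum of two groups has exactly one summand isomorphic to $\Z$ and the other trivial (compare ranks, then torsion). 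Hence exactly one of $C',C''$, say $C'$, has $H_0(C'\otimes Z)\cong Z$; together with the vanishing in the other degrees this is precisely the assertion that $C'$ is knot-like, while the other has vanishing $Z$-coefficient homology and so is not.

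For (ii): given $C\sim C'\oplus C''$ with $C'$ knot-like, the analysis of (i) shows $C''\otimes Z$ is acyclic. The inclusion $\iota\colon C'\hookrightarrow C'\oplus C''$ and the projection $\pi\colon C'\oplus C''\to C'$ are chain maps of quantum degree $0$ with $\pi\circ\iota=\id_{C'}$, and after $\otimes Z$ each sits in a split short exact sequence of complexes whose third term is the acyclic $C''\otimes Z$; by the long exact sequence both $\iota\otimes Z$ and $\pi\otimes Z$ are quasi-isomorphisms, so $\iota$ and $\pi$ are $Z$-isomorphisms of quantum degree $0$. Thus $C'\oplus C''\sim C'$, and composing with $C\sim C'\oplus C''$ yields $C\sim C'$.

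For (iii): induct on $\rk_{\Z[G]}C$, a non-negative integer that drops strictly whenever a nontrivial admissible summand is split off. If $C$ is indecomposable, take $C$ itself as the representative. Otherwise $C\cong A\oplus B$ with $A,B$ nontrivial admissible; an isomorphism of complexes (and its inverse) is a $Z$-isomorphism of quantum degree $0$, so $C\sim A\oplus B$, and (i) singles out the knot-like summand, say $A$. By (ii), $C\sim A$; since $A$ is knot-like with $\rk_{\Z[G]}A<\rk_{\Z[G]}C$, the inductive hypothesis provides an indecomposable knot-like $C'$ with $A\sim C'$, whence $C\sim C'$. In the base case $\rk_{\Z[G]}C=1$, so $C\cong q^a\Z[G]$ lies in a single homological degree, which knot-likeness forces to be degree $0$, and a rank-one free module is indecomposable. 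Finally, note that the non-PID-ness of $\Z[G]$ is immaterial for this lemma: after $\otimes Z$ one works over $Z\cong\Z$, where finitely generated modules are completely transparent, and $\Z[G]$ re-enters only through the bookkeeping of ranks in (iii).
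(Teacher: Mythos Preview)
Your overall line of argument---pass to $\cdot\otimes Z$, split homology along the direct sum, use inclusion/projection for (ii), and induct on total rank for (iii)---matches the paper's exactly. You also deserve credit for isolating the one genuinely delicate point: whether a $Z$\nbd isomorphism between admissible complexes is a quasi-isomorphism after $\otimes Z$. The paper's proof silently uses the same thing when it writes $H(C\otimes Z)\cong H(C'\otimes Z)\oplus H(C''\otimes Z)$.

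The trouble is that the claim you grant is \emph{false}, and with it part~(i) as stated. Take $C=t^0q^0\Z[G]$ (knot-like) and let $D$ be the admissible complex $t^1q^0\Z[G]\xrightarrow{\,2\,}t^2q^0\Z[G]$. The inclusion $C\hookrightarrow C\oplus D$ and projection $C\oplus D\twoheadrightarrow C$ are chain maps of quantum degree~$0$ inducing the identity on $H_0(\cdot\otimes Z)\cong Z$, hence $Z$\nbd isomorphisms; yet $H_2(D\otimes Z)=\Z/2\neq 0$, so $Z$\nbd equivalence does not preserve $H_i(\cdot\otimes Z)$ for $i\neq 0$. Now set $C'=C\oplus D$ and $C''=\bigl(t^3q^0\Z[G]\xrightarrow{\,3\,}t^4q^0\Z[G]\bigr)$: then $C\sim C'\oplus C''$ while neither $C'$ nor $C''$ is knot-like. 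The fix is simple and suffices for every application in the paper (including your own induction in~(iii)): replace $\sim$ by $\cong$ in the hypothesis of~(i), or equivalently assume that $C'\oplus C''$ is itself knot-like. Part~(ii) as stated \emph{is} correct, but your argument overshoots: you only need $H_0(C''\otimes Z)=0$, which does follow, since the $Z$\nbd isomorphisms give $H_0(C'\otimes Z)\oplus H_0(C''\otimes Z)\cong H_0(C\otimes Z)\cong Z$ and $C'$ knot-like forces the first summand to already be~$Z$. That alone makes $\iota$ and $\pi$ into $Z$\nbd isomorphisms (the definition only demands an isomorphism on $H_0$), and (ii) follows.
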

\begin{proof}
\textbf{(i):}
Since $H(C\otimes Z) \cong H(C'\otimes Z) \oplus H(C''\otimes Z)$ and $Z$ cannot be written as sum of two non-trivial $\Z[G]$\nbd modules, it follows that $H(C'\otimes Z) \cong t^0 Z$ and $H(C''\otimes Z) \cong 0$, or vice versa.

\textbf{(ii):}
Similarly as in (i), it follows that $C''\otimes Z$ has trivial homology.
Let $f\colon C'\oplus C''\to C'$ and $g\colon C'\to C'\oplus C''$ be the projection and inclusion map of the summand, respectively. Then 
$f_*\colon H((C'\oplus C'')\otimes Z) \to H(C'\otimes Z)$
and 
$g_*\colon H(C'\otimes Z) \to H((C'\oplus C'')\otimes Z)$ are isomorphisms,
and so $C'\oplus C'' \sim C'$. Transitivity now implies $C\sim C'$.

\textbf{(iii):} Using (i) and (ii), this follows by induction over the total rank of~$C$.
\end{proof}
The proof that dual complexes constitute multiplicative inverses will rely on the following lemma,
which we formulate in a more general setting.
\begin{lemma}\label{lem:inverse}
Let $R$ be a $\Z$\nbd graded ring, and $C$ a chain complex over $R$ of graded free $R$\nbd modules
of finite total rank. If the ungraded Euler characteristic $\chi(C)$ of $C$ is a unit in $R$,
then $C\otimes_R C^*$ admits a summand $t^0q^0 R$, where as usual $t$ denotes homological grading,
and $q$ denotes the grading on $R$.
\end{lemma}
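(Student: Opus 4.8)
The plan is to exhibit $t^0q^0R$ as a direct summand of $C\otimes_R C^*$ by producing a chain map $s\colon t^0q^0R\to C\otimes_R C^*$ together with a chain map $\phi\colon C\otimes_R C^*\to t^0q^0R$ whose composite $\phi\circ s$ is multiplication by a unit of $R$; rescaling $\phi$ then makes it a retraction of $s$, so that $s$ is a split monomorphism of chain complexes and $\im(s)\cong t^0q^0R$ is a direct summand. Conceptually this is the familiar fact that in the symmetric monoidal category of bounded complexes of finite-rank graded free $R$\nbd modules, the trace of $\id_C$ — the ``categorical dimension'' of the dualizable object $C$ — equals the Euler characteristic $\chi(C)$; as soon as $\chi(C)$ is a unit, the evaluation and coevaluation morphisms can be rescaled to split off a copy of the monoidal unit $R$. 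The only real work is to make this explicit with the sign conventions of \cref{sec:prelim}.

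Concretely, I would choose homogeneous bases $(e^m_a)_a$ of the chain modules $C_m$ and the dual bases $(\varepsilon^m_a)_a$ of $(C_m)^*=(C^*)_{-m}$, so that $\qdeg\varepsilon^m_a=-\qdeg e^m_a$. On the homological-degree-$0$ part $(C\otimes_R C^*)_0=\bigoplus_m C_m\otimes_R(C_m)^*$ I would consider $\eta=\sum_m\sigma_m\sum_a e^m_a\otimes\varepsilon^m_a$, which under the canonical identification $C_m\otimes_R(C_m)^*\cong\End_R(C_m)$ corresponds to $\sum_m\sigma_m\,\id_{C_m}$, and set $s(1)=\eta$; and I would define $\phi$ to vanish outside homological degree $0$ and on $C_m\otimes_R(C_m)^*$ to be $\tau_m$ times the evaluation $v\otimes\xi\mapsto\xi(v)$ (the trace, under the above identification). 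Here $\sigma_m,\tau_m\in\{\pm1\}$ are sign sequences to be pinned down. Each $e^m_a\otimes\varepsilon^m_a$ and each $\xi(v)$ has $q$\nbd degree $0$, so both $s$ and $\phi$ are automatically homogeneous of $tq$\nbd degree $(0,0)$, and it remains only to arrange that they are chain maps. Writing $d_C(e^m_a)=\sum_b d^m_{ab}e^{m+1}_b$ and unwinding the Koszul sign in $d_{C\otimes C^*}$ together with the unsigned dual differential $d_{C^*}$ of \cref{sec:prelim}, one checks that $d_{C\otimes C^*}(\eta)=0$ amounts to the recursion $\sigma_m=(-1)^{m+1}\sigma_{m-1}$, while $\phi\circ d_{C\otimes C^*}=0$ amounts to $\tau_m=(-1)^m\tau_{m-1}$. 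Both recursions are solvable, and — this is the crux — normalizing $\sigma_0=\tau_0=1$ forces $\sigma_m\tau_m=(-1)^m$ for all $m$. Hence $s$ and $\phi$ are chain maps and $\phi(s(1))=\phi(\eta)=\sum_m\sigma_m\tau_m\sum_a\varepsilon^m_a(e^m_a)=\sum_m(-1)^m\rk(C_m)=\chi(C)$.

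Since $\chi(C)$ is a unit in $R$, replacing $\phi$ by $\chi(C)^{-1}\phi$ yields $\chi(C)^{-1}\phi\circ s=\id_{t^0q^0R}$, so $e\coloneqq s\circ(\chi(C)^{-1}\phi)$ is an idempotent chain endomorphism of $C\otimes_R C^*$ and $C\otimes_R C^*=\im(e)\oplus\ker(\chi(C)^{-1}\phi)$ is a decomposition into subcomplexes with $\im(e)\cong t^0q^0R$, as claimed. I expect the main obstacle to be purely the sign bookkeeping: confirming that the two recursions above are consistent and that $\sigma_m\tau_m=(-1)^m$ — equivalently, that the ``naive'' dual $C^*$ used in this paper agrees, after a sign change of the homogeneous basis vectors in each homological degree, with the dual in the symmetric monoidal sense, for which the statements that the (co)evaluation maps are chain maps and that $\operatorname{tr}(\id_C)=\chi(C)$ are standard.
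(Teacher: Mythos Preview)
Your proposal is correct and follows essentially the same route as the paper: both construct explicit coevaluation and evaluation chain maps $t^0q^0R\to C\otimes_R C^*\to t^0q^0R$ and verify that their composite is multiplication by~$\chi(C)$. Your sign sequences $\sigma_m,\tau_m$ determined by the recursions $\sigma_m=(-1)^{m+1}\sigma_{m-1}$ and $\tau_m=(-1)^m\tau_{m-1}$ are precisely the paper's $(-1)^{\zeta m}$ and $(-1)^{m+\zeta m}$ (for a certain $\zeta\colon\Z\to\{0,1\}$ depending on $m\bmod 4$), so the two arguments differ only in how the signs are packaged.
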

\begin{proof}
The existence of such a summand is by the splitting lemma equivalent
to the existence of homogeneous chain maps $f\colon t^0q^0R \to C\otimes_R C^*$ and $g\colon C\otimes_R C^* \to t^0q^0R$ such that $g\circ f = \id_{t^0q^0R}$.
Let us first pick bases for the involved chain complexes, then we will construct such maps $f$ and $g$.

Pick a homogeneous basis $(x_i)_i$ for $C$, and denote by $(x_i^*)_i$ the dual basis for $C^*$,
which satisfies $x_i^*(x_j) = \delta_{ij}$ (where $\delta_{ij}$ is the Kronecker delta).
Note that $\tqdeg x_i^* = -\tqdeg x_i$.
Moreover, the differentials of $C$ and $C^*$ are related by
\[
d_{C^*}(x_j^*)(x_i) = x_j^*(d_C(x_i)),
\]
which is just another way of saying that the matrix of $d_{C^*}$ with respect to the basis $(x_i^*)_i$ equals the transpose of the matrix of $d_C$ with respect to the basis $(x_i)_i$.
Now, the elements $(y_{ij})_{i,j}$ given as $y_{ij}\coloneqq x_i\otimes x_j^*$ form a homogeneous basis of $C\otimes_R C^*$. Denote by $(y_{ij}^*)_{i,j}$ its dual basis for $(C\otimes_R C^*)^*$;
so $y_{ij}^*$ is a homogeneous homomorphism (but not in general a chain map)
$C\otimes_R C^* \to t^0q^0R$. We have
$\tqdeg y_{ij}^* = -\tqdeg y_{ij} = -\tqdeg x_i + \tqdeg x_j$.
Also, note that for $v\in C, w\in C^*$, we have
have $y^*_{ij}(v\otimes w) = x_i^*(v) \cdot w(x_j)$.

Let us define $f\colon t^0q^0R \to C\otimes_R C^*$ and $g\colon C\otimes_R C^* \to t^0q^0R$ by%
\[
f(1) = \sum_i (-1)^{\zeta\tdeg x_i} y_{ii}\quad\text{and}\quad
g = \frac{1}{\chi(R)}\sum_i (-1)^{\tdeg x_i + \zeta\tdeg x_i}y^*_{ii}.
\]
Here, $\zeta\colon\Z\to\{0,1\}$ is defined as $\zeta m = 0$ if $m \equiv 0\pmod{4}$ or $m \equiv 1\pmod{4}$, and $\zeta m = 1$ otherwise.\footnote{If we had used the alternative sign convention $d(x\otimes y) = d(x)\otimes y + (-1)^{\tdeg x + \tdeg y} x\otimes d(y)$ for the tensor product of chain complexes, the signs in the definition of $f$ and $g$ would have been simpler and $\zeta$ not be required.}
We will now check that, as desired, $f$ and $g$ are homogeneous chain maps such that $g\circ f = \id_{t^0q^0R}$.

First, $\tqdeg y_{ii} = \tqdeg y^*_{ii} = (0,0)$, so $f$ and $g$ are homogeneous.
Next,
\begin{align*}
g(f(1)) & = \frac{1}{\chi(R)}\sum_i (-1)^{\tdeg x_i + \zeta\tdeg x_i}y^*_{ii}\Bigl(\sum_j (-1)^{\zeta\tdeg x_j} y_{jj}\Bigr) \\
        & = \frac{1}{\chi(R)}\sum_{i,j} (-1)^{\tdeg x_i+\zeta\tdeg x_i + \zeta\tdeg x_j} \delta_{ij} \\
        & = \frac{1}{\chi(R)}\sum_i (-1)^{\tdeg x_i} = \frac{1}{\chi(R)}\cdot\chi(R) = 1
\end{align*}
as desired.
For $f$ to be a chain map is equivalent to $d_{C\otimes_R C^*}(f(1)) = 0$,
which is equivalent to $y_{jk}^*(d_{C\otimes_R C^*} (f(1))) = 0$ for all $j,k$.
One computes
\begin{align*}
& y_{jk}^*(d_{C\otimes_R C^*} (f(1)))  = \sum_i (-1)^{\zeta\tdeg x_i}\cdot y_{jk}^*(d_{C\otimes_R C^*}(x_i \otimes x_i^*)) \\
 & = \sum_i (-1)^{\zeta\tdeg x_i} \cdot \bigl(y_{jk}^*(d_C(x_i) \otimes x_i^*) + (-1)^{\tdeg x_i}\cdot y_{jk}^*( x_i \otimes d_{C^*}(x_i^*))\bigr) \\
 & = \sum_i (-1)^{\zeta\tdeg x_i} \cdot x_j^*(d_C(x_i)) \cdot \delta_{ki} + (-1)^{\tdeg x_i+\zeta\tdeg x_i}\cdot \delta_{ji} \cdot d_{C^*}(x_i^*)(x_k) \\
 & = (-1)^{\zeta\tdeg x_k} x_j^*(d_C(x_k)) + (-1)^{\tdeg x_j+\zeta\tdeg x_j}d_{C^*}(x_j^*)(x_k) \\
 & = ((-1)^{\zeta\tdeg x_k} + (-1)^{\tdeg x_j+\zeta\tdeg x_j}) \cdot x_j^*(d_C(x_k)).
\end{align*}
Since $\tdeg d_C(x_k) = \tdeg(x_k) + 1$, the factor $x_j^*(d_C(x_k))$ vanishes if $\tdeg(x_j) \neq \tdeg(x_k) + 1$. If $\tdeg(x_j) = \tdeg(x_k) + 1$, we claim that the first factor vanishes.
Let $\tdeg(x_j) = m$. It suffices to check for all integers $m$
that $\zeta(m - 1) \not\equiv m+\zeta(m)\pmod{2}$.
This may be done by going through the possible remainders of $m$ mod 4.
This concludes the proof that $f$ is a chain map.
To show that $g$ is, too, we need to check that $g(d(y_{jk})) = 0$ for all~$j,k$,
or equivalently $\chi(R)\cdot g(d(y_{jk})) = 0$.
Similarly as before, one computes
\begin{align*}
& \chi(R)\cdot g(d(y_{jk})) = \sum_i (-1)^{\tdeg x_i + \zeta \tdeg x_i} y^*_{ii}(d(x_j\otimes x_k^*)) \\
& = \sum_i (-1)^{\tdeg x_i + \zeta \tdeg x_i} y^*_{ii}(d(x_j)\otimes x_k^* + (-1)^{\tdeg x_j} x_j \otimes d_{C^*}(x_k^*)) \\
& = (-1)^{\tdeg x_k + \zeta \tdeg x_k} x_k^*(d(x_j)) + (-1)^{\tdeg x_j} d_{C^*}(x_k^*)(x_j) \\
& = ((-1)^{\tdeg x_k + \zeta \tdeg x_k} + (-1)^{\tdeg x_j})\cdot x_k^*(d(x_j)).
\end{align*}
But this is just the term (with $j$ and $k$ switched)
we already encountered and showed to be 0 in the previous calculation.
\end{proof}
\cref{thm:construction} now follows from the following.
\begin{proposition}
\begin{enumerate}[label=(\roman*)]
\item $Z$\nbd equivalence is an equivalence relation on the set of admissible complexes.
\item A semiring\footnote{Like a ring, but without required additive inverses.}
 $\mathcal{R}$  is given
by the set of $Z$\nbd equivalence classes of admissible chain complexes,
equipped with $\oplus$ as addition with the trivial complex as neutral element,
and $\otimes$ as multiplication with $t^0q^0\Z[G]$ as neutral element.
\item For the class of a knot-like complex $C$, a multiplicative inverse in $\mathcal{R}$ is given by the class of~$C^*$.
\item Let $\mathcal{G}\subset\mathcal{R}$ consist of the classes of knot-like complexes $C$. Then an abelian group is given by $\mathcal{G}$ with group operation $\otimes$ and neutral element $t^0q^0\Z[G]$.
\end{enumerate}
\end{proposition}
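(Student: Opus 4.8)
The plan is to establish the four items in order. First I would check (i): reflexivity holds because $\id_C$ is a $Z$\nbd isomorphism of quantum degree $0$, symmetry is built into the definition of $Z$\nbd equivalence, and transitivity follows by composing chain maps — if $f\colon C\to C'$ and $f'\colon C'\to C''$ are $Z$\nbd isomorphisms of quantum degree $0$, then $f'\circ f$ is a chain map of quantum degree $0$ whose base change along $\Z[G]\to Z$ induces $(f'\otimes Z)_*\circ(f\otimes Z)_*$ on $H_0$, a composite of isomorphisms; doing the same for the backward maps gives $C\sim C''$.

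For (ii), the only substantial point is that $\oplus$ and $\otimes$ descend to $Z$\nbd equivalence classes; granting this, the semiring axioms (associativity, commutativity, distributivity, and the claimed neutral elements) are inherited from the fact that chain complexes over $\Z[G]$ form a symmetric monoidal category with biproducts, all of whose structure isomorphisms are homogeneous chain isomorphisms of quantum degree $0$, hence $Z$\nbd isomorphisms both ways. That $\oplus$ descends is immediate, since $-\otimes Z$ commutes with $\oplus$ and $H_0$ of a direct sum is the direct sum of the $H_0$'s. For $\otimes$, I would exploit the base-change identity $(C\otimes C')\otimes Z\cong(C\otimes Z)\otimes_{\Z}(C'\otimes Z)$ of chain complexes over $Z\cong\Z$, under which $f\otimes f'$ becomes $(f\otimes Z)\otimes_{\Z}(f'\otimes Z)$: after $\otimes Z$ a $Z$\nbd isomorphism is a quasi\nbd isomorphism of bounded complexes of free $\Z$\nbd modules — automatically so for knot-like complexes, whose $Z$\nbd homology sits in homological degree $0$ — hence a chain homotopy equivalence, and the tensor product over $\Z$ of two chain homotopy equivalences is again one, so $f\otimes f'$ is still a $Z$\nbd isomorphism. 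I expect this Künneth/base-change step to be the main obstacle, as it is the only place where $Z$\nbd equivalence must genuinely interact with the tensor product.

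For (iii), let $C$ be knot-like. I would first observe that $C^*$ is again knot-like: since $C$ is graded free of finite rank, $C^*\otimes Z\cong\Hom_{\Z}(C\otimes Z,\Z)$, whose homology is the $\Z$\nbd dual of $H_*(C\otimes Z)$ and is therefore free of rank $1$ concentrated in homological degree $0$; and $C\otimes C^*$ is knot-like by the same base-change identity used in (ii). As $H_i(C\otimes Z)$ is $\Z$ for $i=0$ and trivial otherwise, the ungraded Euler characteristic is $\chi(C)=\chi(C\otimes Z)=1$, a unit in $\Z[G]$, so \cref{lem:inverse} yields a summand $t^0q^0\Z[G]$ of $C\otimes C^*$. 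Since $t^0q^0\Z[G]$ and $C\otimes C^*$ are both knot-like, \cref{lem:summand}(ii) gives $C\otimes C^*\sim t^0q^0\Z[G]$, i.e.\ the class of $C^*$ is inverse to that of $C$.

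Finally, (iv) assembles the rest: $\mathcal{G}$ is closed under $\otimes$ (a tensor product of knot-like complexes is knot-like), contains the unit $[t^0q^0\Z[G]]$ (the crossingless diagram's complex is knot-like), and by (iii) every $[C]\in\mathcal{G}$ has inverse $[C^*]\in\mathcal{G}$ since $C^*$ is knot-like, while associativity and commutativity are inherited from (ii); hence $(\mathcal{G},\otimes)$ is an abelian group. Together with the invariance and functoriality of $C(\,\cdot\,)$ under Reidemeister moves, connected sums, mirrors, and cobordisms (axioms \eqref{axiom:rm}--\eqref{axiom:cobo}), this also proves \cref{thm:construction}.
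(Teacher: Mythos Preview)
Your approach is essentially the paper's: (i) and (ii) are treated as routine, (iii) is proved via $\chi(C)=\chi(C\otimes Z)=1$ together with \cref{lem:inverse} and \cref{lem:summand}(ii), and (iv) is assembled from the previous parts. In several places you are more careful than the paper---for instance, you explicitly verify that $C^*$ and $C\otimes C^*$ are knot-like before invoking \cref{lem:summand}, which the paper leaves implicit.

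One genuine subtlety in (ii) deserves comment. Your key step---that a $Z$\nbd isomorphism becomes a quasi-isomorphism after $\otimes Z$---is only justified (as your parenthetical acknowledges) for knot-like complexes, whose $Z$\nbd homology is concentrated in degree~$0$. For arbitrary admissible complexes the step fails, and indeed $\otimes$ does \emph{not} descend to $Z$\nbd equivalence classes: take $C=t^{-1}q^0\Z[G]\oplus t^{1}q^0\Z[G]$ and $D=t^{1}q^0\Z[G]$ with zero differentials; then $H_0(C\otimes Z)=0$, so the zero maps witness $C\sim 0$, yet $H_0((C\otimes D)\otimes Z)\cong\Z$, so $C\otimes D\not\sim 0=0\otimes D$. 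Thus (ii) as literally stated is not correct, and the paper's ``straightforward'' glosses over this. Fortunately this has no bearing on (iii), (iv), or \cref{thm:construction}, all of which live entirely among knot-like complexes, where your base-change/K\"unneth argument goes through cleanly; you might simply restrict (ii) to knot-like complexes from the outset.
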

\begin{proof}
Claims (i) and (ii) are straight forward.
To show claim (iii), note that a knot-like complex $C\in\mathcal{R}$
has $\chi(C) = \chi(C\otimes Z) = \chi(H(C\otimes Z)) = 1$.
So \cref{lem:inverse} implies that $C\otimes C^*$ admits a summand $t^0q^0\Z[G]$.
Since this summand is knot-like, \cref{lem:summand}(ii) implies $C\otimes C^* \sim t^0q^0\Z[G]$.
Now claim (iv) follows quickly.
\end{proof}

Let us point out that the proof of \cref{thm:construction} in this section has been almost automatic,
with the exception of showing that dual complexes are multiplicative inverses in \cref{lem:inverse}.

\section{Calculations of \texorpdfstring{$S$}{S}}\label{sec:calc}
Let us introduce and examine staircases: our principal examples of chain complexes with interesting $S$--invariant.
\begin{definition}\label{def:staircase}
Let $n \geq 0$ and $A = (a_1, \ldots, a_n)$ such that
$a_i \geq 0$ and $a_i$~divides\footnote{%
We follow the standard conventions:
on the one hand, every number divides $0$; on the other hand, $0$ divides no other number except $0$;
and finally, the only numbers coprime to $0$ are $\pm 1$.}
~$a_{i+1}$ for all~$i$.
Then the \emph{staircase}\footnote{Not to be confused with the staircase complexes in Floer homologies.} $\stair_A$ is the chain complex
\[
\Z[G]^{n+1} \xrightarrow{d_A} \Z[G]^n,
\]
with differential $d_A$ represented by the matrix
\[
\begin{pNiceMatrix}
     a_1 & G \\
     &\Ddots&\Ddots&  \\
     && a_n & G
\end{pNiceMatrix}
\]
with respect to a homogeneous basis $x_1, \ldots, x_{2n+1}$
with graded rank $t^0(q^{2n} + q^{2n-2} + \dots + q^0) + t^1(q^{2n} + q^{2n-2} + \dots + q^2)$.
\end{definition}
Note that for all $A$ as above, the complex $\stair_A$ is knot-like.
The name \emph{staircase} comes from the following depiction of the complex,
in which the zeroth and first chain module consist of the direct sum along
the first and second column, respectively:
\[
\newcommand{\myc}[1]{\makebox[3em]{\raisebox{0pt}[1.7ex][0.3ex]{\ensuremath{#1}}}}
\stair_A\ =
\begin{tikzcd}
\myc{q^{2n}\Z[G]} \ar[r,"a_{1}"]               & \myc{q^{2n}\Z[G]} \\
\myc{\vdots}      \ar[ru,"G"]                  & \myc{\vdots}   \\
\myc{q^4\Z[G]}    \ar[ru,"G"] \ar[r,"a_{n-1}"] & \myc{q^4\Z[G]} \\
\myc{q^2\Z[G]}    \ar[ru,"G"] \ar[r,"a_n"]     & \myc{q^2\Z[G]} \\
\myc{q^0\Z[G]}    \ar[ru,"G"] 
\end{tikzcd}
\]
\smallskip

The alert reader realizes they have already encountered staircases:
namely in \cref{conj:whitehead}, which says that $S(W^+_{3n}(T_{2,2n+1})) = [\Sigma_{(2^n)}]$.
\begin{remark}
Note that $\stair_{(1)} \sim \stair_{()}$ and $\stair_{(0)} \sim q^2 \stair_{()}$.
More generally, $\stair_{(1,\ldots,1,a_1, \ldots, a_n, 0,\ldots, 0)} \sim q^{2m} \stair_{a_1, \ldots, a_n}$,
where $m$ is the number of $a_i$ equal to zero.
Our main interest is in staircases with $a_i \geq 2$ for all $i$,
and we only allow $a_i\in\{0,1\}$ for the sake of completeness, and because it allows a simplification in the proof of \cref{lem:staircase3} below.
\end{remark}
In \cite[Section 4.1]{ilm21}, the special case of staircases $\stair_A$ with $a_1 = \dots = a_n = 2$ was considered (which was in that paper denoted by $S_n$), and it was shown that $\stair_A \sim C(K^{\# n})$ for $K = 14n19265$. Here, we generalize those results.
The following proposition completely describes the calculus of
the subgroup $\calstair\subset \mathcal{G}$ generated by staircases.
It immediately implies parts (i) and (ii) of \cref{thm:kernel}.
\begin{proposition}\label{prop:staircases}
\begin{enumerate}[label=(\roman*)]
\item For $A$ as in \cref{def:staircase}, we have \[\stair_A \sim \stair_{(a_1)} \otimes \dots \otimes \stair_{(a_n)}.\]
\item For pairwise coprime numbers $r_1, \ldots, r_m\geq 2$,
we have \[\stair_{(\prod_i r_i)} \sim \stair_{(r_1)} \otimes \dots \otimes \stair_{(r_m)}.\]
\item The abelian group $\calstair$ is isomorphic to $\Z^{\oplus\infty}$; indeed, a basis of $\calstair$ is given by $\stair_{(r)}$ where $r$ ranges over
the set $P = \{0,2,3,4,5,7,\ldots\}$
consisting of zero and all non-trivial prime powers.
\end{enumerate}
\end{proposition}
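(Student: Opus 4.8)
The plan is to prove the three parts in order, each building on the previous, using the group structure on $\mathcal{G}$ from \cref{thm:construction} together with \cref{lem:summand}. Throughout, the key technical device will be to exhibit explicit $Z$\nbd isomorphisms or explicit direct sum decompositions of tensor products of staircases; the hypothesis that the $a_i$ form a divisibility chain is exactly what makes these decompositions available via a change of basis.

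For part (i), by induction on $n$ it suffices to show $\stair_{(a_1,\dots,a_n)} \sim \stair_{(a_1,\dots,a_{n-1})} \otimes \stair_{(a_n)}$, and then, more fundamentally, one reduces to understanding the tensor product $\stair_{(a)}\otimes\stair_{(b)}$ when $a \mid b$. First I would write out $\stair_{(a)} \otimes \stair_{(b)}$ explicitly: it is a complex $\Z[G]^4 \to \Z[G]^4 \to \Z[G]$ (after collecting homological degrees via the Koszul rule), with differentials built from the matrices $(a\ G)$ and $(b\ G)$ and appropriate signs. The goal is to perform a homogeneous change of basis (Gaussian elimination over $\Z[G]$, using that $a \mid b$ so that $b = qa$ for an integer $q$) to split off a contractible summand, leaving a staircase whose entries are, up to the reordering forced by the divisibility chain, $a$ and $b$ again — i.e. $\stair_{(a,b)}$ (or, if $a>b$ were allowed, $\stair_{(b,a)}$; but here the chain condition keeps things in order). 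This is the Smith-normal-form style argument adapted to the graded setting. The same local move, applied repeatedly, collapses the tensor product of $n$ length\nbd one staircases into $\stair_A$. One must be careful that the change of basis is homogeneous (respects the $q$\nbd grading) and that the signs from the Koszul convention do not obstruct the elimination; tracking the $\zeta$\nbd signs as in \cref{lem:inverse} will be the fiddly bookkeeping.

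For part (ii), with $r_1,\dots,r_m$ pairwise coprime, I would again reduce to $m=2$ and show $\stair_{(rs)} \sim \stair_{(r)} \otimes \stair_{(s)}$ for coprime $r,s \geq 2$. By part (i) the right-hand side is $Z$\nbd equivalent to the staircase $\stair_{(r,s)}$ if $r \mid s$ — but they are coprime, so instead I expect to compute $\stair_{(r)}\otimes\stair_{(s)}$ directly and find that, because $\gcd(r,s)=1$, a Bézout combination $\lambda r + \mu s = 1$ lets one perform a change of basis killing a rank\nbd one contractible piece \emph{and} merging the two staircase steps into a single step with entry $rs$ (plus a $q^2$\nbd shifted trivial staircase summand which, by the remark following \cref{def:staircase}, is absorbed up to $Z$\nbd equivalence). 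Concretely, the Bézout identity is what lets you clear an off-diagonal $G$ using integer coefficients, which is impossible when $r,s$ share a factor. Combining (i) and (ii): any staircase $\stair_A$ decomposes, via the prime factorizations of the $a_i$, into a tensor product of $\stair_{(p^k)}$ for prime powers $p^k$ (and copies of $\stair_{(0)} \sim q^2\stair_{()}$ for the zero entries, which are invertible in $\mathcal{G}$).

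For part (iii), the surjection of the free abelian monoid on $P$ onto $\calstair$ follows from (i) and (ii) together with the remark absorbing $1$'s; the content is linear independence of $\{[\stair_{(r)}] : r \in P\}$ in $\mathcal{G}$. Here I would use \cref{thm:stronger}: the Rasmussen invariants $s_c$ are determined by $S$, hence factor through $\mathcal{G}$, and distinguish the primes — $s_p(\stair_{(p^k)})$ differs from $s_\ell(\stair_{(p^k)})$ in a way detecting the prime $p$ — while $s^{\Z}$ (or a direct computation of torsion in the homology of $\stair_A \otimes \mathbb{F}_p[G]$ for various $p$) detects the exponent $k$. More self-containedly: for a putative relation $\bigotimes_r \stair_{(r)}^{\otimes n_r} \sim \bigotimes_r \stair_{(r)}^{\otimes n'_r}$, tensor both sides with $\mathbb{F}_p[G]$ and invoke \cref{prop:structureF}: the multiset of pairs $(b_i,c_i)$ is a knot invariant, and the tensor product of staircases has a computable such multiset from which the exponents of each prime power can be read off (for $\stair_{(p^k)}\otimes\mathbb{F}_p[G]$ the step has $G^k$, contributing $c_i = k$, whereas $\stair_{(r)}\otimes\mathbb{F}_p[G]$ for $r$ coprime to $p$ is contractible as $r$ is a unit). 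Ranging over all primes $p$ recovers all the $n_r$, forcing $n_r = n'_r$. The main obstacle I anticipate is part (i): carrying out the graded Gaussian elimination on $\stair_{(a)}\otimes\stair_{(b)}$ cleanly, with correct Koszul signs, so that the leftover is manifestly a staircase and not merely homotopy equivalent to something one then has to re-identify. Once the local tensor-product computation is pinned down, (ii) is a variant of it and (iii) is formal plus an invocation of the already-constructed Rasmussen invariants.
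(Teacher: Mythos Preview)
Your approach to (ii) matches the paper's \cref{lem:staircase2}: a B\'ezout-based change of basis in $\stair_{(r)}\otimes\stair_{(s)}$ followed by cancellation yields $\stair_{(rs)}$.

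For (i), however, there is a gap. You correctly note that the inductive step is $\stair_{(a_1,\dots,a_{n-1})}\otimes\stair_{(a_n)}\sim\stair_{(a_1,\dots,a_n)}$, but you then claim this ``reduces to'' the two-factor case $\stair_{(a)}\otimes\stair_{(b)}$ with $a\mid b$. It does not: once you have collapsed $\stair_{(a_1)}\otimes\stair_{(a_2)}$ to $\stair_{(a_1,a_2)}$, the next tensor factor produces $\stair_{(a_1,a_2)}\otimes\stair_{(a_3)}$, which is no longer a product of two single-step staircases, and your ``same local move, applied repeatedly'' is a hope rather than an argument. The paper's \cref{lem:staircase1} carries out the full inductive step directly, via an explicit (and somewhat lengthy) sequence of homogeneous basis changes on $\stair_{(a_1)}\otimes\stair_{(a_2,\dots,a_n)}$; the block structure is reminiscent of the $n=2$ case, but the computation is genuinely longer.

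For (iii) there is a more serious problem. The multiset of pairs $(b_i,c_i)$ from \cref{prop:structureF} is an invariant of the \emph{homogeneous homotopy type} of $C(D)\otimes\mathbb{F}[G]$, not of the $Z$\nbd equivalence class: indeed $C\sim C\oplus C'$ for any admissible $C'$ with trivial $H(\,\cdot\,\otimes Z)$, and such a $C'$ can carry arbitrary extra $(b_i,c_i)$ data. So you cannot read off the exponents this way. Nor do the $s_c$ alone suffice, since for instance $s_c(\stair_{(2)})=s_c(\stair_{(4)})$ for every~$c$. The paper instead proves directly (\cref{lem:staircase3}) that for $A\neq B$ in Smith form no degree-$0$ homogeneous chain map $\stair_A\to\stair_B$ can be a $Z$\nbd isomorphism, by examining the image of an explicit generating cycle of $H_0(\stair_A\otimes Z)$. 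Your instinct about $s^{\Z}$ is closer to the mark: $s^{\Z}(\stair_A^*)=(0,a_n,\dots,a_1)$ recovers $A$, and the paper remarks that this gives an alternative proof---but one must pass to the dual, which your proposal does not specify.
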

\begin{remark}
In other words, $\calstair$ is isomorphic to the Grothendieck group of the monoid of finitely generated abelian groups. The isomorphism is given as the linear extension of sending $\stair_A$ to 
$\Z/(a_1) \oplus \dots \oplus \Z/(a_n)$.
\end{remark}
\begin{proof}[Proof of \cref{prop:staircases}]
The meat of the proof is outsourced to three lemmas.
Claims (i) and (ii) follow by induction from \cref{lem:staircase1,lem:staircase2} below, respectively.
Those two lemmas are proven by explicit basis changes in the tensor product.
Let us now prove (iii). It follows from (i) and (ii) that $(\stair_{(r)})_{r\in P}$ generates $\calstair$.
To show linear independence, we have to show that if $\lambda_{r}\in\Z$ for all $r\in P$,
with only finitely many $\lambda_r$ non-zero, then
\[
\bigotimes_{r\in P} (\stair_{(r)})^{\otimes \lambda_{r}} \sim \stair_{()} \quad\Rightarrow\quad
\forall r: \lambda_{r} = 0.
\]
Here $(\stair_{(r)})^{\otimes \lambda_{r}}$ is understood as $(\stair^*_{(r)})^{\otimes -\lambda_{r}}$
for negative~$\lambda_r$.
To prove this, first bring all terms with negative $\lambda_{r}$ to the right-hand side of the above $\sim$ equation,
and then use (i) and (ii) to replace the left and right hand side of the equation by a $Z$\nbd equivalent term $\stair_{(A)}$ and $\stair_{(B)}$, respectively, such that 
$A = (a_1, \ldots, a_n), B = (b_1, \ldots, b_m)$ with $a_i\geq 0, a_i\neq 1, a_i|a_{i+1}$ and $b_i \geq 0, b_i\neq 1, b_i|b_{i+1}$.
This may be achieved similarly as one finds an isomorphism from a given finite sum of cyclic abelian groups to an abelian group $\Z/a_1 \oplus \dots \oplus \Z/a_n$ with $a_i$ as above.
It will be proven in \cref{lem:staircase3} that for such $A$ and $B$, $A\neq B$
implies $\stair_A \not\sim \stair_B$. That concludes the proof.
\end{proof}
\begin{lemma}\label{lem:staircase1}
For $A$ as in \cref{def:staircase}, we have
\[
\stair_{(a_1)} \otimes \stair_{(a_2, \dots, a_n)} \sim \stair_{(a_1, \dots, a_n)}.
\]
\end{lemma}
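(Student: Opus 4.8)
The plan is to produce, by an explicit change of basis, a splitting
\[
\stair_{(a_1)}\otimes\stair_{(a_2,\dots,a_n)}\ \cong\ \stair_{(a_1,\dots,a_n)}\oplus C''
\]
of graded chain complexes with $C''$ admissible, and then to invoke \cref{lem:summand}(ii). Write $u_1,u_2$ (homological degree $0$) and $v$ (degree $1$) for a homogeneous basis of $\stair_{(a_1)}$, so that $du_1=a_1v$ and $du_2=Gv$; and $p_1,\dots,p_n$ (degree $0$), $q_1,\dots,q_{n-1}$ (degree $1$) for a homogeneous basis of $\stair_{(a_2,\dots,a_n)}$, so that $dp_1=a_2q_1$, $dp_j=Gq_{j-1}+a_{j+1}q_j$ for $2\le j\le n-1$, and $dp_n=Gq_{n-1}$. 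The Koszul rule gives the differential of the tensor product on the product basis $\{u_i\otimes p_j,\ u_i\otimes q_j,\ v\otimes p_j,\ v\otimes q_j\}$. Since $a_1\mid a_j$ for every $j$, fix integers $b_1=1,b_2,\dots,b_n$ with $a_j=a_1b_j$ (taking $b_j=0$ for $j\ge 2$ in the degenerate case $a_1=0$).

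For the $\stair_{(a_1,\dots,a_n)}$--summand I would take, in degree $0$,
\[
x_1=u_1\otimes p_1,\qquad x_j=b_j\,u_1\otimes p_j+u_2\otimes p_{j-1}\ \ (2\le j\le n),\qquad x_{n+1}=u_2\otimes p_n,
\]
and, in degree $1$, the cycles
\[
y_1=b_2\,u_1\otimes q_1+v\otimes p_1,\qquad y_j=b_{j+1}\,u_1\otimes q_j+u_2\otimes q_{j-1}+v\otimes p_j\ \ (2\le j\le n-1),\qquad y_n=u_2\otimes q_{n-1}+v\otimes p_n.
\]
A direct computation, in which the identity $a_j=a_1b_j$ is used repeatedly, shows $dx_1=a_1y_1$, $dx_j=Gy_{j-1}+a_jy_j$ for $2\le j\le n$, and $dx_{n+1}=Gy_n$; hence $\langle x_1,\dots,x_{n+1},y_1,\dots,y_n\rangle$ is a subcomplex isomorphic to $\stair_{(a_1,\dots,a_n)}$. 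For the complement I would take $C''$ spanned by $u_1\otimes p_{k+1}$ in degree $0$, by $u_1\otimes q_k$ and $v\otimes p_{k+1}$ in degree $1$, and by $v\otimes q_k$ in degree $2$, for $1\le k\le n-1$. One then checks that the $x$'s together with the degree--$0$ generators of $C''$, the $y$'s together with the degree--$1$ generators of $C''$, and the degree--$2$ generators of $C''$ form homogeneous bases of the three chain modules (the base change matrices become unitriangular after reordering), and that $C''$ is a subcomplex, which is visible directly from the differential formulas.

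To conclude: $C''$ is admissible, and since $\stair_{(a_1)}\otimes\stair_{(a_2,\dots,a_n)}$ and $\stair_{(a_1,\dots,a_n)}$ are both knot-like, $H(C''\otimes Z)=0$ (as in the proof of \cref{lem:summand}(i)), so \cref{lem:summand}(ii) yields $\stair_{(a_1)}\otimes\stair_{(a_2,\dots,a_n)}\sim\stair_{(a_1,\dots,a_n)}$. The one genuinely non-routine point is to guess this change of basis. The $x_j$ and the cycles $y_j$ are more or less forced: up to a choice of complement, $y_j$ spans the homogeneous cycles in its internal degree, and $x_1$ must generate a summand reproducing the $a_1$--step. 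But completing $\{y_1,\dots,y_n\}$ to a homogeneous basis of the degree--$1$ module so that the complement is \emph{closed under the differential} is the delicate part: the obvious completions fail, and one needs precisely the $u_1\otimes q_k$ together with the $v\otimes p_{k+1}$. If preferred, the cases $a_i\in\{0,1\}$ can instead be reduced to the generic case via $\stair_{(1,\dots)}\sim\stair_{(\dots)}$ and $\stair_{(0,\dots)}\sim q^2\stair_{(\dots)}$ from the remark after \cref{def:staircase}.
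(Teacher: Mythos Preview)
Your proof is correct and follows the same overall strategy as the paper: exhibit an explicit direct-sum decomposition of $\stair_{(a_1)}\otimes\stair_{(a_2,\dots,a_n)}$ with $\stair_{(a_1,\dots,a_n)}$ as a summand, then invoke \cref{lem:summand}(ii). The difference is purely presentational. The paper writes out the differential matrices and reaches the splitting through a sequence of row and column operations (treating $a_1=0$ separately beforehand), and its complement ends up being a sum of shifted copies of a two-step complex built from $a_1$ and $G$. You instead write down the final homogeneous basis $x_1,\dots,x_{n+1},y_1,\dots,y_n$ in closed form and verify the relations $dx_1=a_1y_1$, $dx_j=Gy_{j-1}+a_jy_j$, $dx_{n+1}=Gy_n$, $dy_j=0$ directly via $a_j=a_1b_j$; your choice $b_j=0$ for $j\ge 2$ when $a_1=0$ makes the argument uniform. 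Your complement $C''$ is not split into the same pieces as the paper's, but that is immaterial since only admissibility of $C''$ and knot-likeness of the staircase summand are needed.
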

\begin{proof}
If $a_1$ is zero, then all $a_i$ are, since they are all divisible by $a_1$.
In that case, $\stair_{(a_1)} \sim q^2 \stair_{()}$, 
$\stair_{(a_2, \ldots, a_n)} \sim q^{2n-2} \stair_{()}$ and $\stair_A \sim q^{2n} \stair_{()}$,
so the desired statement follows easily. Hence we now assume $a_1 \neq 0$.
Let $x_1, x_2, x_3$ and $y_1, \ldots, y_{2n-1}$ be bases of $\stair_{a_1}$
and $\stair_{(a_2, \dots, a_n)}$, respectively, as in \cref{def:staircase}.
Then we use the following bases for $\stair_{(a_1)} \otimes \stair_{(a_2, \dots, a_n)}$:
\[
\begin{pNiceArray}{c}[margin]
\arrayrulecolor{darkred}
x_1\otimes y_1, \\
\vdots \\
x_1 \otimes y_{n}, \\\hline
x_2\otimes y_1,\\
\vdots\\
x_2 \otimes y_{n}
\end{pNiceArray}
\xrightarrow{d_0}
\begin{pNiceArray}{c}[margin]
\arrayrulecolor{darkred}
x_1\otimes y_{n+1}, \\
\vdots \\
x_1\otimes y_{2n-1}, \\\hline
x_2\otimes y_{n+1},\\
\vdots \\
x_2\otimes y_{2n-1}, \\\hline
x_3\otimes y_1,\\
\vdots\\
x_3 \otimes y_{n}
\end{pNiceArray}
\xrightarrow{d_1}
\begin{pNiceArray}{c}[margin]
x_3\otimes y_{n+1},\\
\vdots \\
x_3\otimes y_{2n-1}
\end{pNiceArray}
\]
Below, we write the differentials as matrices with respect to those bases.
Note that the bases in $t$--degree $0$, $1$, and $2$ are formed of two, three, and one `cluster', each.
For convenience, these clusters are hinted at by horizontal and vertical red lines.
The proof strategy is simply to apply basis changes to transform $\stair_{(a_1)} \otimes \stair_{(a_2, \dots, a_n)}$ into a complex that splits off $\stair_{(a_1, \dots, a_n)}$ as summand. Instead of writing down the basis change matrices, we indicate basis changes by blue arrows. Let us apply the following basis change to the original differentials of $\stair_{(a_1)} \otimes \stair_{(a_2, \dots, a_n)}$:
\[
\begin{pNiceArray}{*4{@{}c}|c*3{@{}c}}[margin]
\arrayrulecolor{darkred}
 a_2 & G &&& \\[-2mm]
  & \Ddots & \Ddots &&& \\[-2mm]
  && a_n & G \\\hline
 &&&& a_2 & G \\[-2mm]
 &&&&  & \Ddots & \Ddots \\[-2mm]
 &&&&  && a_n & G \\\hline
 a_1     &&&& G \\[-2mm]
  & \Ddots    &&& & \Ddots \\[-2mm]
  && \Ddots   &&& & \Ddots \\[-2mm]
  &&& a_1 && && G \\
\CodeAfter
        \tikz \draw[color=blue,thick,decorate,decoration={brace,mirror}] ([shift={(-1em,-0.1em)}]7-|1) -- ([shift={(-1em,0em)}]9.5-|1);
        \tikz \draw[color=blue,thick,decorate,decoration={brace,mirror}] ([shift={(-1em,-0.2em)}]1-|1) -- ([shift={(-1em,0em)}]3.5-|1);
        \tikz \draw[color=blue,thick,decorate,decoration={brace}] ([shift={(1em,-0.1em)}]8-|11) -- ([shift={(1em,0em)}]10.5-|11);
        \tikz \draw[color=blue,thick,decorate,decoration={brace}] ([shift={(1em,-0.2em)}]4-|11) -- ([shift={(1em,0em)}]6.5-|11);
        \tikz \draw[color=blue,thick,->,out=130,in=-130] ([shift={(-1.5em,-0.1em)}]8-|1) to node[right]{$\tfrac{-a_i}{a_1}$} ([shift={(-1.5em,-0.3em)}]2-|1);
        \tikz \draw[color=blue,thick,->,out=50,in=-50] ([shift={(1.5em,-0.1em)}]9-|11) to node[left]{$-1$} ([shift={(1.5em,-0.3em)}]5-|11);
\end{pNiceArray}
\qquad
\begin{pNiceArray}{*3{@{}c}|c*2{@{}c}|c*3{@{}c}}[margin]
\arrayrulecolor{darkred}
 -a_1 &&& -G & && a_2 & G \\[-2mm]
 & \Ddots &&& \Ddots &&& \Ddots & \Ddots \\[-2mm]
 && -a_1 &&& -G &&& a_n & G \\
\CodeAfter
        \tikz \draw[color=blue,thick,decorate,decoration={brace,mirror}] ([shift={(0em,-0.8em)}]3-|1) -- ([shift={(0em,-0.8em)}]3-|3.5);
        \tikz \draw[color=blue,thick,decorate,decoration={brace,mirror}] ([shift={(0em,-0.8em)}]3-|7.5) -- ([shift={(0em,-0.8em)}]3-|9.5);
        \tikz \draw[color=blue,thick,decorate,decoration={brace}] (1-|4.5) -- (1-|6.5);
        \tikz \draw[color=blue,thick,decorate,decoration={brace}] (1-|8.5) -- (1-|10.5);
        \tikz \draw[color=blue,thick,->,out=30,in=150] ([shift={(1em,1em)}]1-4.center) to node[above]{$1$} ([shift={(1em,1em)}]1-8.center);
        \tikz \draw[color=blue,thick,->,out=-30,in=-150] ([shift={(-1.2em,-1.2em)}]3-3.center) to node[below]{$\tfrac{a_i}{a_1}$} ([shift={(-1.2em,-1.2em)}]3-9.center);
\end{pNiceArray}.
\]
Here, we use that $a_1 \neq 0$ divides $a_i$. The basis change results in:
\[
\begin{pNiceArray}{*4{@{}c}|c*3{@{}c}}[margin]
\arrayrulecolor{darkred}
0 & G &&& -\tfrac{a_2}{a_1}G &  \\[-2mm]
  & & \Ddots &&& \Ddots &  \\[-2mm]
  &&  & G &&& -\tfrac{a_n}{a_1}G & 0 \\\hline
0 & -a_1 &&& a_2 &  \\[-2mm] &  & \Ddots &&  &   \Ddots \\[-2mm]
 &&  & -a_1 &  && a_n & 0 \\\hline
 a_1     &&&& G \\[-2mm]
  & \Ddots    &&& & \Ddots \\[-2mm]
  && \Ddots   &&& & \Ddots \\[-2mm]
 &&& a_1 & & && G \\
\CodeAfter
        \tikz \draw[color=blue,thick,->] plot [smooth, tension=1.7] coordinates {([shift={(.5em,0em)}]11-|1) ([shift={(.5em,-1.0em)}]11-|3) ([shift={(.5em,0em)}]11-|5)};
\end{pNiceArray}
\quad
\begin{pNiceArray}{*3{@{}c}|c*2{@{}c}|c}[margin]
\arrayrulecolor{darkred}
 -a_1 &&& -G  \\[-2mm]
 & \Ddots &&& \Ddots  \\[-2mm]
 && -a_1 &&& -G & \phantom{G} \\
\end{pNiceArray}.
\]
\vspace{.5em}

Now we apply two basis changes to the columns of the matrix; since they do not alter the second differential, we omit it. Firstly, permute the columns, moving the first column to the beginning of the second cluster, secondly add multiples of columns from the first cluster to the second:
\[
\begin{pNiceArray}{*3{@{}c}|c*4{@{}c}}[margin,name=m]
\arrayrulecolor{darkred}
 G &&& 0 & -\tfrac{a_2}{a_1}G &  \\[-2mm]
   & \Ddots &&&& \Ddots &  \\[-2mm]
  &  & G &&&& -\tfrac{a_n}{a_1}G & 0 \\\hline
 -a_1 &&& 0 & a_2 &  \\[-2mm]
   & \Ddots &&  &&   \Ddots \\[-2mm]
 &  & -a_1 &  &&& a_n & 0 \\\hline
 0     &&& a_1 & G \\[-2mm]
a_1  &     &&& & \\[-2mm]
 & \Ddots   &&& & \\[-2mm]
  && a_1 && &&& G \\
\CodeAfter
\tikz\draw[line width=0.3mm, shorten >=2.5mm, shorten <=2.5mm, loosely dotted] (m-7-5)--(m-10-8);
        \tikz \draw[color=blue,thick,decorate,decoration={brace,mirror}] ([shift={(0em,-0.8em)}]10-|1) -- ([shift={(0em,-0.8em)}]10-|4);
        \tikz \draw[color=blue,thick,decorate,decoration={brace,mirror}] ([shift={(0.7em,-0.8em)}]10-|5) -- ([shift={(0em,-0.8em)}]10-|9);
\tikz \draw[color=blue,thick,->,out=-30,in=-150] ([shift={(-0.2em,-1.2em)}]10-|3) to node[above]{$\tfrac{a_i}{a_1}$} ([shift={(-1.2em,-1.2em)}]10-|7.5);
\end{pNiceArray},
\]
\vspace{.5em}

\[
\begin{pNiceArray}{*3{@{}c}|c*4{@{}c}}[margin,name=m2]
\arrayrulecolor{darkred}
 G &&& \phantom{0} & \phantom{-\tfrac{a_2}{a_1}G} &  \\[-2mm]
   & \Ddots &&&&  &  \\[-2mm]
  &  & G &&&& \phantom{-\tfrac{a_n}{a_1}G} & \phantom{0} \\\hline
 -a_1 &&& \\[-2mm]
   & \Ddots && \\[-2mm]
 &  & -a_1 &  &&& \\\hline
 0     &&& a_1 & G \\[-2mm]
 a_1     &&&  &  \\[-2mm]
  & \Ddots    &&& & \\[-2mm]
  && a_1 && && a_n & G \\
\CodeAfter
\tikz\draw[line width=0.3mm, shorten >=2.5mm, shorten <=2.5mm, loosely dotted] (m2-7-4)--(m2-10-7);
\tikz\draw[line width=0.3mm, shorten >=2.5mm, shorten <=2.5mm, loosely dotted] (m2-7-5)--(m2-10-8);
        \tikz \draw[color=blue,thick,decorate,decoration={brace,mirror}] ([shift={(-1em,-0.2em)}]4-|1) -- ([shift={(-1em,0em)}]6.5-|1);
        \tikz \draw[color=blue,thick,decorate,decoration={brace,mirror}] ([shift={(-1em,0.2em)}]8-|1) -- ([shift={(-1em,0em)}]10.5-|1);
        \tikz \draw[color=blue,thick,->,out=230,in=130] ([shift={(-1.5em,-0.3em)}]5-|1) to node[left]{$1$} ([shift={(-1.5em,-0.1em)}]9-|1);
\end{pNiceArray}.
\]
Finally, we make a basis change to the rows of the first differential,
which happens to not change the second differential, resulting in:
\[
\begin{pNiceArray}{*3{@{}c}|c*4{@{}c}}[margin,name=m3]
\arrayrulecolor{darkred}
 G &&& \phantom{0} & \phantom{-\tfrac{a_2}{a_1}G} &  \\[-2mm]
   & \Ddots &&&&  &  \\[-2mm]
  &  & G &&&& \phantom{-\tfrac{a_n}{a_1}G} & \phantom{0} \\\hline
 -a_1 &&& \\[-2mm]
   & \Ddots && \\[-2mm]
 &  & -a_1 &  &&& \\\hline
      &&& a_1 & G \\[-2mm]
  &     &&& & \\[-2mm]
  && && && a_n & G \\
\CodeAfter
\tikz\draw[line width=0.3mm, shorten >=2.5mm, shorten <=2.5mm, loosely dotted] (m3-7-4)--(m3-9-7);
\tikz\draw[line width=0.3mm, shorten >=2.5mm, shorten <=2.5mm, loosely dotted] (m3-7-5)--(m3-9-8);
\end{pNiceArray},
\ 
\begin{pNiceArray}{*3{@{}c}|c*2{@{}c}|c*3{@{}c}}[margin]
\arrayrulecolor{darkred}
 -a_1 &&& -G & && \phantom{a_2} & \phantom{G} \\[-2mm]
 & \Ddots &&& \Ddots &&&  &  \\[-2mm]
 && -a_1 &&& -G &&& \phantom{a_n} & \phantom{G} \\
\end{pNiceArray}.
\]
This complex is isomorphic to a sum of $\stair_{(a_1, \dots, a_n)}$
and several grading shifted copies of $\stair_{(a_1)} \otimes \stair_{(a_1)}$.
By \cref{lem:summand}(ii), this implies that this complex is $Z$\nbd equivalent to $\stair_{(a_1, \dots, a_n)}$.
\end{proof}
For the proof of the next two lemmas, we will require \emph{cancellation}:
\begin{lemma}[\cite{zbMATH05155295}]\label{lem:cancellation}
A graded chain complex of the form
\[
\begin{tikzcd}[ampersand replacement=\&,column sep=2.8em]
\cdots \ar[r] \& R^a \ar{r}{%
\left(\begin{array}{c}
A \\\hline
B
\end{array}\right)} \& R\oplus R^b \ar{r}{%
\left(\begin{array}{@{}c@{\,}|@{\,}c@{}}
1 & C \\\hline
D & E
\end{array}\right)}
\& R\oplus R^c  \ar{r}{%
\left(\begin{array}{@{}c@{\,}|@{\,}c@{}}
F & G
\end{array}\right)} \& R^d \ar[r] \& \cdots \\
\hspace{-2.1em}\makebox[0pt][l]{over a graded ring $R$ is homogeneously homotopy equivalent to} \\
\cdots \ar[r] \& R^a \ar{r}{\bigl(B\bigr)} \& R^{b} \ar{r}{%
\bigl(E - DC\bigr)}
\& R^{c} \ar{r}{\bigl(G\bigr)} \& R^d \ar[r] \& \cdots.\qed
\end{tikzcd}
\]
\end{lemma}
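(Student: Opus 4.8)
The plan is to prove this by the standard ``Gaussian elimination'' change of basis, which isolates a contractible direct summand (the argument goes back to \cite{zbMATH05155295}).

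First I would record what $d^2=0$ gives at the two junctures adjacent to the distinguished summand. Composing $\left(\begin{smallmatrix}A\\B\end{smallmatrix}\right)$ with $\left(\begin{smallmatrix}1 & C\\ D & E\end{smallmatrix}\right)$ produces $\left(\begin{smallmatrix}A+CB\\DA+EB\end{smallmatrix}\right)=0$, so in particular $A=-CB$; composing $\left(\begin{smallmatrix}1 & C\\ D & E\end{smallmatrix}\right)$ with $(F\mid G)$ produces $(F+GD\mid FC+GE)=0$, so $F=-GD$. These two identities are exactly what will make the relevant off-diagonal blocks disappear.

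Next I would change basis: on the chain module $R\oplus R^b$ apply the automorphism $\left(\begin{smallmatrix}1 & C\\ 0 & 1\end{smallmatrix}\right)$, on $R\oplus R^c$ apply $\left(\begin{smallmatrix}1 & 0\\ -D & 1\end{smallmatrix}\right)$, and leave all other chain modules fixed. Both automorphisms are homogeneous of degree $0$: the diagonal blocks are identities, and the off-diagonal entries $C$ and $D$ are submatrices of the homogeneous differential $\left(\begin{smallmatrix}1 & C\\ D & E\end{smallmatrix}\right)$, hence homogeneous of the degree dictated by the rank-one summand. Conjugating the total differential by the resulting automorphism of the total chain module turns $\left(\begin{smallmatrix}A\\B\end{smallmatrix}\right)$ into $\left(\begin{smallmatrix}A+CB\\B\end{smallmatrix}\right)=\left(\begin{smallmatrix}0\\B\end{smallmatrix}\right)$, turns $\left(\begin{smallmatrix}1 & C\\ D & E\end{smallmatrix}\right)$ into $\left(\begin{smallmatrix}1 & 0\\ 0 & E-DC\end{smallmatrix}\right)$, and turns $(F\mid G)$ into $(F+GD\mid G)=(0\mid G)$, using the two identities from the previous step.

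Finally I would observe that the transformed complex is literally the direct sum of the two-term acyclic complex $(\cdots\to 0\to R\xrightarrow{\,1\,}R\to 0\to\cdots)$ occupying the two distinguished spots and the complex $\cdots\to R^a\xrightarrow{\,B\,}R^b\xrightarrow{\,E-DC\,}R^c\xrightarrow{\,G\,}R^d\to\cdots$. The acyclic summand is homogeneously contractible, with contracting homotopy the identity map $R\to R$ run one step backwards; projecting it away is therefore a homogeneous homotopy equivalence, and composing with the change-of-basis isomorphism gives the assertion. I expect no genuine obstacle here: the one thing to watch is that every map in sight is homogeneous, which hinges only on the labelled block being the identity of a rank-one summand, so that its source and target carry the same grading shift. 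In particular there is no need to check separately that $B$, $E-DC$, $G$ compose to zero, since they form a direct summand of a chain complex.
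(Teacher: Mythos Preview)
Your argument is correct and is the standard Gaussian-elimination proof of the cancellation lemma. The paper does not actually give its own proof here: it states the lemma with a citation to Bar-Natan's paper \cite{zbMATH05155295} and closes with a \qed, treating it as a known result. Your write-up would serve perfectly well as a self-contained proof in its place.
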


\begin{lemma}\label{lem:staircase2}
If $a, b\geq 0$ are coprime, then $\stair_{(a)} \otimes \stair_{(b)} \simeq \stair_{(ab)}$.
\end{lemma}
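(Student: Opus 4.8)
The plan is to write down explicit bases for the tensor product $\stair_{(a)} \otimes \stair_{(b)}$ and then perform a sequence of homogeneous basis changes that, with the help of the cancellation lemma (\cref{lem:cancellation}), reduce the complex to $\stair_{(ab)}$. Let me set up notation as in \cref{def:staircase}: write $x_1, x_2, x_3$ for the homogeneous basis of $\stair_{(a)}$ with $\tdeg x_1 = \tdeg x_3 = 0$, $\tdeg x_2 = 1$, $d_A(x_1) = a x_2$, $d_A(x_3) = G x_2$; and similarly $y_1, y_2, y_3$ for $\stair_{(b)}$. The tensor product then has chain modules in $t$-degree $0$ spanned by $x_1 \otimes y_1, x_1 \otimes y_3, x_3 \otimes y_1$; in $t$-degree $1$ spanned by $x_1 \otimes y_2, x_2 \otimes y_1, x_2 \otimes y_3, x_3 \otimes y_2$; and in $t$-degree $2$ spanned by $x_2 \otimes y_2$. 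The differential is read off from the Koszul sign rule, and one checks the total rank is $t^0(q^4 + q^2 + q^0) + t^1(q^4 + q^2 + q^2 + q^4)$, wait — more carefully, $\qrk(\stair_{(a)}) = t^0(q^2+q^0) + t^1 q^2$, so the tensor product has total rank $(t^0(q^2+1)+t^1 q^2)^2 = t^0(q^4 + 2q^2 + 1) + t^1(2q^4 + 2q^2) + t^2 q^4$, matching a rank-$3$, rank-$4$, rank-$1$ complex.

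The key algebraic input is that $\gcd(a,b) = 1$, so there exist integers $u, v$ with $ua + vb = 1$. First I would use this Bézout identity to perform a basis change in $t$-degree $1$: replace the pair of generators mapping (via combinations of $a, b, G$) into $x_2 \otimes y_2$ by a new basis in which exactly one generator maps to $x_2 \otimes y_2$ with coefficient $1$ (a unit in $\Z[G]$), and the complementary generator maps with coefficient $0$ into $x_2 \otimes y_2$. Then \cref{lem:cancellation} applies: we cancel that unit entry against $x_2 \otimes y_2$, killing the $t$-degree-$2$ module entirely and reducing the $t$-degree-$1$ module to rank $3$. The cancellation lemma also modifies the $t$-degree-$0$-to-$1$ part of the differential by the correction term $E - DC$; I would track this correction carefully and verify that, after one further clean-up basis change in $t$-degree $0$ (again using $u, v$), the resulting $3 \to 3$ complex has differential matrix exactly the staircase matrix $\begin{pmatrix} ab & G \\ & ab & G\end{pmatrix}$ — wait, that is a $2\times 3$ matrix; rather, the reduced complex is $\Z[G]^? \to \Z[G]^?$; let me just say: the reduced complex is isomorphic to $\stair_{(ab)}$ possibly up to splitting off an acyclic summand, which by \cref{lem:summand}(ii) does not affect the $Z$-equivalence class — though here the claim is actual homotopy equivalence $\simeq$, so one must check no spurious summand appears, or absorb it via cancellation as well.

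The main obstacle I expect is bookkeeping of the Koszul signs and of the correction term $E - DC$ produced by cancellation: getting the signs consistent so that the surviving $3\times 2$ (or $2\times 3$) matrix is \emph{exactly} $\stair_{(ab)}$ and not merely isomorphic to it up to sign changes of basis vectors. A secondary subtlety is ensuring the generator that gets cancelled is chosen so that its removal leaves the correct quantum gradings on the surviving generators — the shift by $q^2$ coming from $\tdeg x_2$ must line up with the staircase convention $t^0(q^2 + 1) + t^1 q^2$ for $\stair_{(ab)}$ after the cancellation, which forces a specific choice of which Bézout combination to use. Since both $a$ and $b$ are allowed to be $0$ (with the convention that $0$ is coprime only to $\pm 1$), the case $a = 0$ forces $b = 1$, and then $\stair_{(b)} \sim \stair_{()}$ makes the statement immediate; I would dispatch that degenerate case first and then assume $a, b \geq 1$, or better, $a, b \geq 2$ (since $a = 1$ also trivialises via the remark that $\stair_{(1)} \sim \stair_{()}$), leaving the genuinely interesting case where both Bézout coefficients are nonzero.
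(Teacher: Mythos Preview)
Your plan is essentially the paper's: write out the tensor product in an explicit basis, use the B\'ezout identity for $a,b$ to perform a homogeneous basis change that produces a unit entry in the differential, and then apply \cref{lem:cancellation} repeatedly until only $\stair_{(ab)}$ remains (the paper does three cancellations in total, so the claimed $\simeq$ holds on the nose without appeal to \cref{lem:summand}). One bookkeeping slip to fix before you carry it out: you dropped $x_3\otimes y_3$ from your list of $t$-degree-$0$ generators---there are four, as your own rank polynomial $q^4+2q^2+1$ confirms---and your indexing of the $x_i$ differs from \cref{def:staircase} (which places $x_1,x_2$ in $t$-degree~$0$ and $x_3$ in $t$-degree~$1$); neither affects the strategy.
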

\begin{proof}
Denote the bases of $\stair_{(a)}$ and $\stair_{(b)}$ by $x_1, x_2, x_3$ and $y_1, y_2, y_3$, respectively,
so that $d_{(a)}(x_1) = ax_3, d_{(a)}(x_2) = Gx_3, d_{(b)}(y_1) = by_3, d_{(b)}(y_2) = Gy_3$.
Then the following is a basis for $\stair_{(a)} \otimes \stair_{(b)}$:
\[
\begin{pmatrix}
x_1 \otimes y_1, \\
x_1 \otimes y_2, \\
x_2 \otimes y_1, \\
x_2 \otimes y_2\phantom{,}
\end{pmatrix}
\xrightarrow{d_0}
\begin{pmatrix}
x_1 \otimes y_3, \\
x_3 \otimes y_1, \\
x_2 \otimes y_3, \\
x_3 \otimes y_2\phantom{,}
\end{pmatrix}
\xrightarrow{d_1}
(x_3\otimes y_3).
\]
With respect to this basis, the differentials of $\stair_{(a)} \otimes \stair_{(b)}$ are
\[
d_0 = \begin{pmatrix}
b & G &   &   \\
a &   & G &   \\
  &   & b & G \\
  & a &   & G 
\end{pmatrix},\quad
d_1 = 
\begin{pmatrix}
a & -b & G & -G
\end{pmatrix}.
\]
We will now make a sequence of basis changes and cancellations (see \cref{lem:cancellation} for the latter).
Since $a,b$ are coprime, we may choose $\alpha,\beta\in\Z$ such that the matrix $\begin{psmallmatrix} b & \alpha \\ a & \beta \end{psmallmatrix}$ has determinant $1$, and thus inverse 
$\begin{psmallmatrix} \beta & -\alpha \\ -a & b \end{psmallmatrix}$.
For the $4\times 4$--matrix $T$ equal to the block sum $\begin{psmallmatrix} b & \alpha \\ a & \beta \end{psmallmatrix} \oplus \begin{psmallmatrix} 1 & 0 \\ 0 & 1 \end{psmallmatrix}$,
we find
\[
T^{-1}d_0 = 
\begin{pmatrix}
1 & \beta G & -\alpha G  &   \\
  & -a G    & b G        &   \\
  &         & b          & G \\
  & a       &            & G 
\end{pmatrix},\quad
d_1T = 
\begin{pmatrix}
0 & -1 & G & -G
\end{pmatrix}.
\]
Canceling twice yields a homogeneous homotopy equivalence between $\stair_{(a)} \otimes \stair_{(b)}$ and
the complex $\Z[G]^3 \longrightarrow \Z[G]^2$ with grading $t^0(q^2+q^2+q^0) + t^1(q^2+q^2)$
and differential
\[
d_0' =
\begin{pmatrix}
     & b   & G \\
 a   &     & G 
\end{pmatrix}.
\]
Changing the bases of both chain modules yields
\[
\begin{pmatrix}
-1 & 1 \\
0 & 1 
\end{pmatrix}
\cdot
d_0'
\cdot
\begin{pmatrix} b & \alpha & \\ a & \beta & \\ && 1 \end{pmatrix}
=
\begin{pmatrix}
 0  & -1      & 0 \\
 ab & a\alpha & G
\end{pmatrix}.
\]
Another cancellation gives the desired homogeneous homotopy equivalence to $\stair_{(ab)}$.
\end{proof}
\begin{lemma}\label{lem:staircase3}
For all $A = (a_1, \ldots, a_n), B = (b_1, \ldots, b_m)$ with $a_i \geq 0$, $a_i\neq 1$, $a_i|a_{i+1}$ and $b_i \geq 0$, $b_i\neq 1$, $b_i|b_{i+1}$ and $A\neq B$ we have that $\stair_A \not\sim \stair_B$.
\end{lemma}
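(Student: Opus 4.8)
The plan is to extract from the $Z$-equivalence class of $\stair_A$ enough algebraic invariants to reconstruct the tuple $A$ (given the normalisation $a_i\neq 1$, $a_i\mid a_{i+1}$). There are two kinds of data to recover: the overall quantum degree shift, and the list of torsion coefficients $a_i$. For the degree shift, note that $\stair_A$ has a unique summand of ungraded rank $1$ (coming from $x_{2n+1}$ up to the choices involved), and more robustly, the graded rank of the free module $H_0(\stair_A\otimes\mathbb{Q}[G])$'s distinguished summand records $q^{2m}$ where $m=\#\{i: a_i=0\}$; this is a $Z$-equivalence invariant by the first half of the proof of \cref{thm:stronger} (it is just $s_0$). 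So I may first peel off the zero entries: if $A=(0,\dots,0,a_{k+1},\dots,a_n)$ with $a_{k+1}\geq 2$, then $\stair_A\sim q^{2k}\stair_{(a_{k+1},\dots,a_n)}$, and the number $k$ is determined by $s_0(\stair_A)$ (and, say, $s_2$, to pin down the shift since staircases with all $a_i\geq 2$ have $s_c=0$ for all $c$). Hence it suffices to treat the case $a_i\geq 2$ for all $i$ and $b_i\geq 2$ for all $i$, with the shifts already matched.

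In that case the idea is to compute, for each prime $p$, the Khovanov homology over $\mathbb{F}_p$, or rather the structure-theorem data of \cref{prop:structureF}, which is a $Z$-equivalence invariant. Tensoring $\stair_{(a_i)}$ with $\mathbb{F}_p[G]$: if $p\mid a_i$ then $\stair_{(a_i)}\otimes\mathbb{F}_p[G]$ is already in the normal form of \cref{prop:structureF}, contributing one ``$G$-arrow'' summand $t^0 q^0\mathbb{F}_p[G]\xrightarrow{G}t^1 q^2\mathbb{F}_p[G]$ together with the trivial summand $t^0 q^2\mathbb{F}_p[G]$; if $p\nmid a_i$ then $a_i$ is a unit over $\mathbb{F}_p$ and $\stair_{(a_i)}\otimes\mathbb{F}_p[G]$ is contractible down to $t^0 q^2\mathbb{F}_p[G]$. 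So over $\mathbb{F}_p[G]$, using \cref{prop:staircases}(i) to write $\stair_A\sim\bigotimes_i\stair_{(a_i)}$ and distributing the tensor product, the number of ``$G$-arrow'' summands $t^{a} q^{b}\mathbb{F}_p[G]\xrightarrow{G^{c}}t^{a+1}q^{b+2c}\mathbb{F}_p[G]$ in the normal form of $\stair_A\otimes\mathbb{F}_p[G]$ — and more precisely the multiset of their lengths $c$ and positions — depends only on $\#\{i:p\mid a_i\}$, since each $\stair_{(a_i)}$ with $p\mid a_i$ contributes a tensor factor isomorphic (over $\mathbb{F}_p[G]$) to the fixed complex $\mathbb{F}_p[G]^2\xrightarrow{\left(\begin{smallmatrix}0&G\end{smallmatrix}\right)}\mathbb{F}_p[G]$. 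From the resulting normal-form data one reads off $\nu_p:=\#\{i:p\mid a_i\}$ as a $Z$-equivalence invariant of $\stair_A$, for every prime $p$.

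Now $A\neq B$ forces $\nu_p(A)\neq\nu_q(B)$ for some prime: indeed, knowing $\#\{i:p\mid a_i\}$ for every prime $p$ is exactly knowing, for each prime $p$ and each $e\geq 1$, the number of $i$ with $p^e\mid a_i$ (since $\#\{i:p^e\mid a_i\}=\nu_p$ computed for the sub-tuple of $a_i$ divisible by $p^{e-1}$ — more cleanly, one recovers the whole multiset $\{a_1,\dots,a_n\}$ from the function $d\mapsto\#\{i:d\mid a_i\}$ by Möbius-type inversion, and this function is determined by its values at prime powers because the $a_i$ divide one another). Because $a_i\neq 1$, $a_i\mid a_{i+1}$ the tuple $A$ is in turn determined by its underlying multiset. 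Hence $A$ is recovered from the $Z$-equivalence class of $\stair_A$, giving $\stair_A\not\sim\stair_B$ when $A\neq B$.

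The main obstacle is the middle step: establishing rigorously that the normal-form data of $\stair_A\otimes\mathbb{F}_p[G]$ from \cref{prop:structureF} depends only on $\nu_p(A)$, i.e.\ identifying exactly which ``$G$-arrows'' (with which lengths $c_i$) appear. This is a concrete computation with the tensor product of $\nu_p$ copies of the length-one complex $\mathbb{F}_p[G]^2\xrightarrow{\left(\begin{smallmatrix}0&G\end{smallmatrix}\right)}\mathbb{F}_p[G]$ (the non-$p$-divisible factors being discarded up to homotopy); one can do it by induction on $\nu_p$, using \cref{lem:cancellation} to put each tensor-product stage back into normal form, in the same spirit as \cref{lem:staircase1,lem:staircase2}. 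A clean alternative that sidesteps tracking the lengths $c_i$ is to instead use the total dimension $\dim_{\mathbb{F}_p}H_*(\stair_A\otimes\mathbb{F}_p[G]/(G^N))$ for large $N$, or the Poincaré polynomial of $H_*(\stair_A\otimes\mathbb{F}_p)$ (ordinary $\mathbb{F}_p$-Khovanov homology, $G=0$): this latter is $2^{\nu_p+1}$-dimensional versus $2$-dimensional according to whether $\nu_p\geq 1$ or not after accounting for the contractible factors — more precisely it grows monotonically and strictly with $\nu_p$, so $\nu_p$ is visibly a $Z$-equivalence invariant. Either route works; I would present the computation via repeated cancellation since it also yields the explicit normal form and matches the paper's established technique.
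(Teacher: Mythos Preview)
Your approach has a fundamental gap. The structure-theorem data of \cref{prop:structureF} beyond the single integer $s$ is \emph{not} a $Z$\nbd equivalence invariant: the first half of the proof of \cref{thm:stronger} establishes invariance only for $s_c$, not for the pieces $(a_i, b_i, c_i)$. Concretely, let $E$ be the complex $t^0 q^0 \Z[G] \xrightarrow{G} t^1 q^2 \Z[G]$; then $H(E \otimes Z) = 0$, so \cref{lem:summand}(ii) gives $\stair_{(2)} \oplus E \sim \stair_{(2)}$, yet these two complexes have different normal forms over every $\mathbb{F}_p[G]$. Hence none of the quantities you propose (the number of $G$\nbd arrows, $\dim_{\mathbb{F}_p} H_*(\,\cdot\, \otimes \mathbb{F}_p)$, etc.) is $Z$\nbd equivalence invariant. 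Even waiving this, the invariants that \emph{are} available---the $s_c$---cannot distinguish $\stair_{(2)}$ from $\stair_{(4)}$: over $\mathbb{F}_2$ both $2$ and $4$ vanish, and over $\mathbb{F}_p$ for odd $p$ both are units, so $\stair_{(2)}\otimes\mathbb{F}_p[G] \cong \stair_{(4)}\otimes\mathbb{F}_p[G]$ for every~$p$. Your claim that the $\nu_p$ determine $\#\{i: p^e\mid a_i\}$ for $e\geq 2$ via M\"obius inversion is simply false; this is precisely why \cref{ex:staircasetimesdual} can exhibit pairwise inequivalent complexes $C^k$ all having $s_c(C^k) = 0$ for every~$c$.

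The paper argues directly, without extracting invariants. After padding with $1$'s to make $n = m$, it writes down the unique (up to sign) degree-$0$ homogeneous cycle $\alpha = (G^n, -a_1 G^{n-1}, \dots, (-1)^n \prod_i a_i)$ generating $H_0(\stair_A \otimes Z)$, and the analogous $\beta$ for $\stair_B$. The homological-degree-$0$ component of any homogeneous degree-$0$ chain map $f\colon \stair_A\to\stair_B$ is upper triangular in the standard bases, so the $(k{+}1)$\nbd st coordinate of $f(\alpha)$ is divisible by $a_1 \cdots a_k$; taking $k$ minimal with $a_k \neq b_k$ and arranging $a_k \nmid b_k$ then forces $f(\alpha) \neq \pm\beta$, so $f$ is not a $Z$\nbd isomorphism. (The paper also notes, in the remark following the second half of the proof of \cref{thm:stronger}, an invariant-based alternative that does work: $s^{\Z}$ applied to the dual recovers $A$ via $s^{\Z}(\stair_A^*) = (0, a_n, \dots, a_1)$.) A minor slip in your write-up: since $a_i \mid a_{i+1}$ and $0$ divides only $0$, zero entries sit at the end of $A$, not the beginning.
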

\begin{proof}
For simplicity, we would like to have $n = m$. So, if $n\neq m$, we insert $|n-m|$ many entries equal to 1 at the beginning of $A$ or $B$. This modification preserves the $Z$\nbd equivalence class of $\stair_A$ and $\stair_B$, and also preserves the condition $A \neq B$. So we now assume w.l.o.g.~that $n = m$, and drop the assumption $a_i\neq 1, b_i\neq 1$.

Let $k \in \{1,\ldots,n\}$ such that $a_i = b_i$ for $i < k$ and $a_k \neq b_k$.
We assume w.l.o.g.~that $a_k$ does not divide $b_k$ (this may be achieved by switching $A$ and~$B$ if necessary).
Let $f\colon \stair_A\to \stair_B$ be a homogeneous chain map of quantum degree $0$.
Our goal is now to show that $f$ is not a $Z$\nbd isomorphism.
This will imply $\stair_A\not\sim \stair_B$ as desired.
Note that the chain
\[
\alpha = \Bigl(G^n, -a_1 G^{n-1}, a_1a_2 G^{n-2}, \dots, (-1)^n \prod_{i=1}^n a_i\Bigr)
\]
in the 0\nbd th chain module of $\stair_A$ is, up to sign, the unique homogeneous cycle of
quantum degree~$0$ that represents a generator of~$H_0(\stair_A\otimes Z)\cong Z$.
The analogous statement is true for the chain
\[
\beta = \Bigl(G^n, -b_1 G^{n-1}, b_1b_2 G^{n-2}, \dots, (-1)^n \prod_{i=1}^n b_i\Bigr)
\]
in the 0\nbd th chain module of $\stair_B$.
Hence $f_*\colon H_0(\stair_A\otimes Z) \to H_0(\stair_B\otimes Z)$
is an isomorphism
if and only if $f(\alpha) = \pm\beta$. Let us now show that $f(\alpha) \neq \pm\beta$.
Being homogeneous of degree~$0$,
the chain map $f$ in homological degree~0 is represented by a square matrix of size $(n+1)$ of the form
\[
\begin{pmatrix}
M_{11} & G M_{12} & G^2 M_{13} & \cdots \\
0      & M_{22}   & G M_{23}   & \cdots \\
0      & 0        & M_{33}     & \cdots \\
\vdots & \vdots   & \vdots     & \ddots
\end{pmatrix}
\]
with $M_{ij}\in\Z$. So the $(k+1)$\nbd st entry of the vector $f(\alpha)$ is divisible by $a_1 \cdots a_k$.
Since $a_1 \cdots a_k$ does not divide $b_1 \cdots b_k$, it follows that $f(\alpha)$ is not equal to~$\pm \beta$.
That concludes the proof.
\end{proof}
Now that we have seen some non-trivial examples of $Z$\nbd equivalence classes,
one is naturally led to the following questions.
\begin{question}\label{q:isoG}
What is the isomorphism type of $\mathcal{G}$?
\end{question}
\begin{question}\label{q:geography}
What is the geography of $S$, i.e.~which knot-like complexes $C\in\mathcal{G}$
do indeed occur as the $S$--invariant of a knot?
\end{question}
Regarding \cref{q:isoG}, \cref{prop:staircases} gives us the $\Z^{\oplus\infty}$ subgroup $\calstair \subset \mathcal{G}$. Actually, $\tfrac12(s_0, s_2-s_0, s_3 - s_0, \ldots)$ gives a surjection $u\colon \mathcal{G} \to \Z^{\oplus\infty}$, whose kernel contains a $\Z^{\oplus\infty}$ subgroup with basis
$\stair_{(p^{r+1})} \otimes \stair^*_{(p^r)}$, where $p$ ranges over all primes and $r$ ranges over the positive integers.
Regarding \cref{q:geography}, one may conjecture that the restriction of $u$ to the subgroup of $\mathcal{G}$ realized by knots is still surjective~\cite{zbMATH06296598,postcard,schuetz,LZ22}, and that all the classes in $\calstair \subset \mathcal{G}$ are indeed realized by knots.
However, as the following example suggests, there may very well be
knots $K$ with $S(K) \not \in \calstair$.
\begin{example}\label{ex:18}
The knot $K = 18^{nh}_{9772775}$ (see the tables~\cite{zbMATH07760154}),
which is one of the knots discovered by Sch\"utz with $s_c(K) = s_0(K)$ for
all primes~$c$ but $s^{\Z} \neq s_0(K)$, may be computed using \cite{khoca,homca} to have $S(K)$ represented by the following complex:
\[
%
C\ \coloneqq
\begin{tikzcd}
 & q^4\Z[G] \ar[rd,"2"] \\
 & q^4\Z[G] \ar[r,"4"] & q^4\Z[G] \\
q^2\Z[G] \ar[ru,"-G"]\ar[r,"4"]\ar[rd,"2",darkgreen]
 & q^2\Z[G] \ar[ru,"G"]\\
\color{darkgreen}{q^0\Z[G]} \ar[r,"G",darkgreen] & \color{darkgreen}{q^2\Z[G]}
\end{tikzcd}
\]
\smallskip

Ignoring the part of the complex $C$ printed in green
yields the complex $C^1 = \stair_{(2)} \otimes \stair^*_{(4)}$ discussed in 
\cref{ex:staircasetimesdual} below, shifted by $q^4$,
which lies in $\calstair$.
On the other hand,
it seems plausible that the class of $C$ does not lie in~$\calstair$.
\end{example}

\section{Further properties of \texorpdfstring{$S$}{S}} \label{sec:further}
Let us review the definition of Sch\"utz's invariant $s^{\Z}$~\cite{schuetz} in terms of filtrations.
For a diagram $D$ of a knot $K$ and an even integer $k$,
let the subcomplex $\mathcal{F}_k (C(D) \otimes Z) \subset C(D) \otimes Z$ be defined as
\[
\{ x\otimes 1 \mid x\in C(D) \text{ is homogeneous of quantum grading } k\}.
\]
This is a \emph{filtration} on $C(D) \otimes Z$, i.e.
\[
\cdots \subset \mathcal{F}_k (C(D) \otimes Z) \subset \mathcal{F}_{k-2} (C(D) \otimes Z) \subset \cdots
\]
Such a filtration induces a filtration on homology,
namely by setting
\[
\mathcal{F}_k H(C(D) \otimes Z) \coloneqq
\{ [x] \in H(C(D) \otimes Z) \mid x \in \mathcal{F}_k (C(D) \otimes Z)\}.
\]
When identifying $H_0(C(D) \otimes Z)$ with $\Z$,
an example of such a filtration is 
\[
\newcommand{\veq}{\rotatebox{90}{=}}
\begin{array}{*{15}{@{\ }c}}
\cdots & \subset & 0 & \subset & 0 & \subset & 6\Z & \subset & 2\Z & \subset & \Z & \subset & \Z & \subset & \cdots \\
&& \veq && \veq&& \veq&& \veq&& \veq&& \veq \\
\cdots & \subset & \mathcal{F}_8 & \subset & \mathcal{F}_6 & \subset & \mathcal{F}_4 & \subset & \mathcal{F}_2 & \subset & \mathcal{F}_0 & \subset & \mathcal{F}_{-2} & \subset & \cdots
\end{array}
\]
The filtration on $H_0(C(D) \otimes Z)$
can be shown to stabilize as $0$ on the left and $H_0(C(D) \otimes Z)$ on the right,
and so it is completely encoded by the tuple $(k_0, k_1, \ldots, k_n) \in (2\Z) \times (\Z_{\geq 1})^n$
satisfying

\begin{align*}
\mathcal{F}_{k_0 + 2} & = 0, & \mathcal{F}_{k_0 - 2}/\mathcal{F}_{k_0} & \cong k_1\Z, \\
\mathcal{F}_{k_0} & \neq 0, & \mathcal{F}_{k_0 - 4}/\mathcal{F}_{k_0 - 2} & \cong k_2\Z, \\
\mathcal{F}_{k_0 - 2n + 2} & \neq H_0(C(D) \otimes Z), & & \vdots \\
\mathcal{F}_{k_0 - 2n} & = H_0(C(D) \otimes Z),  & \mathcal{F}_{k_0 - 2n}/\mathcal{F}_{k_0 - 2n + 2} & \cong k_n \Z.
\end{align*}
The invariant $s^{\Z}(K)$ is defined precisely as this tuple.
Its first entry can be shown to equal $s_0(K)$. Its length minus one (i.e.~$n$) is denoted by $\gl(K)$.
For the above filtration,
one would e.g.~find $s^{\Z}(K) = (4,3,2)$ and $\gl(K) = 2$.
Let us now prove that for all knot $K$ and $K'$, $s^{\Z}(K) = s^{\Z}(K')$ follows from $S(K) = S(K')$, in other words the\ldots
\begin{proof}[\ldots second half of the proof of \cref{thm:stronger}]
Let $D$ and $D'$ be diagrams of $K$ and $K'$, respectively.
Since $S(K) = S(K')$, there exist $Z$\nbd isomorphisms $f\colon C(D)\to C(D')$, $g\colon C(D')\to C(D)$.
Since $f$ and $g$ are homogeneous, the maps $f_*$ and $g_*$ are \emph{filtered} maps, i.e. $f_*\mathcal{F}_k H(C(D)\otimes Z) \subset \mathcal{F}_k H(C(D')\otimes Z)$ and similarly for $g_*$.
This implies that the filtrations on $H(C(D)\otimes Z)$ and $H(C(D')\otimes Z$ are isomorphic.
Since $s^{\Z}$ is defined in terms of those filtrations, $s^{\Z}(K) = s^{\Z}(K')$ follows.
\end{proof}
\begin{remark}
In practice, one computes $s^{\Z}(K)$ from a given chain complex~$C$ representing $S(K)$
by determining the filtration on $H_0(C\otimes Z)$; either directly, or via the associated spectral sequence. For example, let $(a_1, \ldots, a_n)$ be a tuple as in \cref{def:staircase},
and assume for simplicity that $a_i\neq 0$.
Then the reader may convince themself that
\[
S(K) = [\stair_{(a_1, \ldots, a_n)}] \quad\Rightarrow\quad s^{\Z}(K) = (0),
\]
and (more interestingly!) that
\[
S(K) = [\stair^*_{(a_1, \ldots, a_n)}] \quad\Rightarrow\quad s^{\Z}(K) = (0, a_n, a_{n-1}, \ldots, a_1).
\]
This offers an alternative proof of \cref{lem:staircase3}.
\end{remark}
Let us now exhibit examples of knots that may be distinguished by $S$,
but not by $s_c$ or $s^{\Z}$.
\begin{example}\label{ex:staircasetimesdual}
A quick calculation (similar to \cref{lem:staircase1,lem:staircase2})
shows that for $k \geq 1$, the complex $\stair_{(2^k)} \otimes \stair^*_{(2^{k+1})}$
is isomorphic to the
sum of the two complexes (both supported in homological degrees $-1$, $0$ and $1$)
\[
\hspace{28pt}
\begin{tikzcd}
 & q^0\Z[G] \ar[r,"2^k"] & q^0\Z[G] \\
q^{-2}\Z[G] \ar[ru,"-G"] \ar[r,"2^k"] & q^{-2}\Z[G]\ar[ru,"G"]
\end{tikzcd}
\]
and\smallskip

\[
C^k\ \coloneqq
\begin{tikzcd}
 & q^0\Z[G] \ar[rd,"2^k"] \\
 & q^0\Z[G] \ar[r,"2^{k+1}"] & q^0\Z[G] \\
q^{-2}\Z[G] \ar[ru,"-G"]\ar[r,"2^{k+1}"] & q^{-2}\Z[G] \ar[ru,"G "]
\end{tikzcd}
\]
Since $C^k$ is knot-like, 
we have $\stair_{2^k} \otimes \stair^*_{2^{k+1}} \sim C^k$ by \cref{lem:summand}.
In particular, if \cref{conj:whitehead} holds, then $C^k$ represents
$S$ of the knot
\[
W^+_{3k}(T_{2,2k+1}) \#\, {-W^+_{3k+3}(T_{2,2k+3})}.
\]
One easily calculates that $s_c(C^k) = 0$ for all characteristics~$c$ and all $k$.
Moreover one finds $s^{\Z}(C^k) = (0,2)$ and $s^{\Z}((C^k)^*) = (0)$ for all~$k$.
However, it follows from \cref{prop:staircases} that $C^k \not\sim C^{\ell}$ for $k \neq \ell$.
\end{example}

Just like the original Rasmussen invariant, $S$ is not just a concordance homomorphism,
but also yields a lower bound for the smooth slice genus, which we explore now.
\begin{definition}
For two knot-like complexes $C$ and $C'$,
denote by 
$d(C, C')$ 
the minimal $n\geq 0$ such that there exists $Z$\nbd isomorphisms
$f\colon C\to C'$ and $g\colon C'\to C$ of quantum degree $-2n$.
\end{definition}
The existence of such an integer $n$ is a consequence of the following.
\begin{proposition}\label{prop:slicegenus}
The function $d$ induces a metric on $\mathcal{G}$, which we also denote by~$d$.
This metric is a lower bound for the smooth oriented cobordism distance,
i.e.\ $d(S(K), S(J)) \leq g_4(K\#\, {-J})$.
\end{proposition}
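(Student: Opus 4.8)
The plan is to verify the three metric axioms for $d$ on $\mathcal{G}$, and then deduce the cobordism bound from axiom~\eqref{axiom:cobo} together with the composition behaviour of $Z$\nbd isomorphisms. First I would check that $d$ is well-defined on $Z$\nbd equivalence classes: if $C\sim C_1$ and $C'\sim C_1'$, then precomposing and postcomposing a pair of $Z$\nbd isomorphisms $C\to C'$, $C'\to C$ of quantum degree $-2n$ with the degree\nbd$0$ $Z$\nbd isomorphisms witnessing $C\sim C_1$ and $C'\sim C_1'$ yields a pair between $C_1$ and $C_1'$ of the same quantum degree $-2n$ (since composing homogeneous maps adds quantum degrees, and here we add $0$); composition of $Z$\nbd isomorphisms is again a $Z$\nbd isomorphism because the induced maps on $H_0(\,\cdot\,\otimes Z)$ compose. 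Hence $d(C_1,C_1')\le d(C,C')$, and by symmetry equality holds. Symmetry $d(C,C')=d(C',C)$ is immediate from the symmetric roles of $f$ and $g$ in the definition. For $d(C,C')=0$ iff $[C]=[C']$: the ``if'' direction is the definition of $\sim$; the ``only if'' direction is that $d=0$ means there exist degree\nbd$0$ $Z$\nbd isomorphisms both ways, which is exactly $C\sim C'$.

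The remaining axiom is the triangle inequality $d(C,C'')\le d(C,C')+d(C',C'')$. Here I would take $Z$\nbd isomorphisms $f_1\colon C\to C'$ and $g_1\colon C'\to C$ of quantum degree $-2d(C,C')$, and $f_2\colon C'\to C''$, $g_2\colon C''\to C'$ of quantum degree $-2d(C',C'')$; then $f_2\circ f_1\colon C\to C''$ and $g_1\circ g_2\colon C''\to C$ are $Z$\nbd isomorphisms of quantum degree $-2\bigl(d(C,C')+d(C',C'')\bigr)$, so $d(C,C'')$ is at most that sum. (I should also note in passing that $d$ takes finite values on any pair of knot-like complexes: combining a $Z$\nbd isomorphism $C\to C'$ of some quantum degree $-2a$ with one $C'\to C$ of quantum degree $-2b$, the existence of such is guaranteed because $C\otimes C^*\sim t^0q^0\Z[G]$ and $C'\otimes (C')^*\sim t^0q^0\Z[G]$ by the proposition following \cref{lem:inverse}, giving a $Z$\nbd isomorphism $C\to C'$ as $C\to C\otimes C'\otimes (C')^*\to \ldots$; more simply, one can take the maximum of the degrees occurring — this is a routine point I would state but not belabor.)

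For the cobordism bound: given a smooth oriented cobordism realizing $g_4(K\#{-J})$, i.e.\ a genus\nbd$g$ cobordism from $K\#{-J}$ to the unknot with $g=g_4(K\#{-J})$, I would instead work with the connected cobordism between $K$ and $J$ obtained from it (adding a tube, so the genus is still $g$), or directly argue as follows: by \eqref{axiom:cobo} there is a homogeneous $Z$\nbd isomorphism $C(D_K)\to C(D_J)$ of quantum degree $-2g$, and, turning the cobordism upside down, a homogeneous $Z$\nbd isomorphism $C(D_J)\to C(D_K)$ of quantum degree $-2g$ as well. Hence $d(S(K),S(J))=d(C(D_K),C(D_J))\le g$. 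To phrase this in terms of $g_4(K\#{-J})$, I would use that a genus\nbd$g$ slice surface for $K\#{-J}$ yields a genus\nbd$g$ connected cobordism between $K$ and $J$ (cap off and reinterpret), which is standard. I expect the main obstacle to be the bookkeeping around well-definedness on equivalence classes and the finiteness of $d$ — nothing deep, but it requires carefully tracking that composing with degree\nbd$0$ isomorphisms does not change quantum degrees and that the composite of $Z$\nbd isomorphisms is a $Z$\nbd isomorphism; the triangle inequality and the cobordism bound are then essentially formal consequences of \eqref{axiom:cobo} and the additivity of quantum degree under composition.
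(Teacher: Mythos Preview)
Your proposal is correct and follows the same approach as the paper's proof, which is extremely terse: the paper simply says ``one easily checks that $d$ is a pseudo-metric,'' observes that $d(C,C')=0$ is precisely the definition of $Z$\nbd equivalence so $d$ descends to a metric, and notes that the bound is immediate from axiom~\eqref{axiom:cobo}. Your write-up just fills in the details the paper leaves implicit---well-definedness on classes, symmetry, triangle inequality via composition, finiteness, and the passage from a slice surface for $K\#{-J}$ to a connected cobordism between $K$ and $J$---and these are all handled correctly.
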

\begin{proof}
On the set of knot-like complexes, one easily checks that $d$ is a pseudo-metric.
Since $C$ and $C'$ are by definition $Z$\nbd equivalent if and only if $d(C, C') = 0$,
$d$ descends to a metric on $\mathcal{G}$.
The lower bound immediately follow from property \eqref{axiom:cobo}.
\end{proof}
This bound subsumes the bounds
\begin{align*}
\frac{|s_c(K) - s_c(J)|}{2} & \leq g_4(K\#\, {-J}), \\
\left|\frac{s_0(K) - s_0(J)}{2}-\gl(K)+\gl(J)\right| & \leq g_4(K\#\, {-J})
\end{align*}
coming from the Rasmussen invariants and Sch\"utz's invariant
(see \cref{prop:rasmussenF}(iii) for the first bound
and \cite{schuetz} as well as \cite[Proof of Lemma 3.12]{sqz} for the second bound):
\begin{proposition}
For all knots $K$ and $J$ and all $c$ equal to zero or a prime,
\[
\frac{|s_c(K) - s_c(J)|}{2} \leq d(S(K), S(J)).
\]
Moreover
\[
\left|\frac{s_0(K) - s_0(J)}{2}-\gl(K)+\gl(J)\right| \leq d(S(K), S(J)).
\]
\end{proposition}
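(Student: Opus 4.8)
The plan is to prove both inequalities by unwinding the definition of $d$. Set $\nu\coloneqq d(S(K),S(J))$, fix diagrams $D,D'$ of $K$ and $J$, and pick $Z$\nbd isomorphisms $f\colon C(D)\to C(D')$ and $g\colon C(D')\to C(D)$ of quantum degree $-2\nu$ realising this minimum; such a pair exists because, just as was used for \cref{prop:slicegenus}, property \eqref{axiom:cobo} makes the set over which the minimum is taken non\nbd empty. It then suffices to show that any such pair forces $|s_c(K)-s_c(J)|\leq 2\nu$ and $|(s_0(K)-2\gl(K))-(s_0(J)-2\gl(J))|\leq 2\nu$, and to divide by $2$.

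For the first inequality I would rerun the first half of the proof of \cref{thm:stronger}, now tracking quantum degrees exactly as in \cref{prop:rasmussenF}(iii). Fix a field $\mathbb{F}$ of characteristic $c$; tensoring $f$ over $\Z[G]$ with $\mathbb{F}[G]$ gives a chain map $f\otimes\mathbb{F}[G]$, and choosing decompositions for $C(D)\otimes\mathbb{F}[G]$ and $C(D')\otimes\mathbb{F}[G]$ as in \cref{prop:structureF}, this chain map has among its matrix entries a homomorphism $\alpha\colon t^0q^{s_c(K)}\mathbb{F}[G]\to t^0q^{s_c(J)}\mathbb{F}[G]$ between the distinguished rank\nbd one summands. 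By \cref{lemma:knotlikefield} (with $M=\mathbb{F}$), $\alpha$ induces the isomorphism on $H_0(\,\cdot\,\otimes\mathbb{F}[G]/(G-1))$, hence $\alpha\neq 0$; being homogeneous, $\alpha$ equals $m\,G^{\ell}$ with $m\neq 0$ and $\ell\geq 0$, and since $\alpha$ inherits the quantum degree $-2\nu$ of $f$, the degree formula of \cref{ex:zg} gives $-2\nu=-s_c(K)+s_c(J)-2\ell$, so $s_c(K)-s_c(J)=2\nu-2\ell\leq 2\nu$. Running the same argument with $g$ gives $s_c(J)-s_c(K)\leq 2\nu$, and the two combine to $|s_c(K)-s_c(J)|\leq 2\nu$.

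For the second inequality the key point is that a homogeneous chain map of quantum degree $-2\nu$ shifts the Sch\"utz filtration down by $2\nu$: as $f$ carries a homogeneous element of quantum grading $k$ to one of grading $k-2\nu$, one gets $f_*\,\mathcal{F}_k H(C(D)\otimes Z)\subset\mathcal{F}_{k-2\nu}H(C(D')\otimes Z)$, and likewise for $g_*$. Recall that, after identifying each of $H_0(C(D)\otimes Z)$ and $H_0(C(D')\otimes Z)$ with $\Z$, the associated filtration is determined by two indices: the largest $k$ with $\mathcal{F}_k\neq 0$, which is $s_0$, and the smallest $k$ with $\mathcal{F}_k$ equal to all of $H_0$, which is $s_0-2\gl$. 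Since $f$ is a $Z$\nbd isomorphism, $f_*$ is an isomorphism on $H_0(\,\cdot\,\otimes Z)$, in particular injective and surjective. Injectivity applied at $k=s_0(K)$ (where $\mathcal{F}_kH_0(C(D)\otimes Z)\neq 0$) yields $\mathcal{F}_{s_0(K)-2\nu}H_0(C(D')\otimes Z)\neq 0$, hence $s_0(K)-2\nu\leq s_0(J)$; surjectivity applied at $k=s_0(K)-2\gl(K)$ (where $\mathcal{F}_kH_0(C(D)\otimes Z)=H_0(C(D)\otimes Z)$) yields $\mathcal{F}_{s_0(K)-2\gl(K)-2\nu}H_0(C(D')\otimes Z)=H_0(C(D')\otimes Z)$, hence $s_0(K)-2\gl(K)-2\nu\leq s_0(J)-2\gl(J)$. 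Running $g_*$ through the same two steps gives the reverse inequalities, and together the four read $|s_0(K)-s_0(J)|\leq 2\nu$ and $|(s_0(K)-2\gl(K))-(s_0(J)-2\gl(J))|\leq 2\nu$, the latter being exactly the claimed bound after dividing by $2$.

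I do not foresee a real obstacle: the geometric input is confined to \eqref{axiom:cobo} and \cref{prop:slicegenus}, and the rest is bookkeeping already rehearsed in \cref{prop:rasmussenF}(iii) and in the description of $s^{\Z}$. The step I would be most careful about is the last paragraph, specifically the \emph{direction} of the filtration shift induced by a quantum degree $-2\nu$ map and the identification of the two extremal indices of the Sch\"utz tuple as $s_0$ (where the filtration on $H_0$ first becomes non\nbd zero) and $s_0-2\gl$ (where it first fills up $H_0$); a sign error there would flip the desired $\leq$ into a $\geq$.
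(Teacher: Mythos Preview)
Your argument is correct and matches the paper's proof essentially line for line: the paper dispatches the first inequality by pointing to the first half of the proof of \cref{thm:stronger} (exactly what you spell out), and for the second it uses that $f_*$ is filtered of degree $-2\nu$ together with surjectivity at the index $s_0(K)-2\gl(K)$, then swaps $K$ and $J$. One harmless slip: you call $s_0-2\gl$ the \emph{smallest} $k$ with $\mathcal{F}_k=H_0$, whereas it is the \emph{maximal} such $k$ (the filtration increases as $k$ decreases), but the inequality you then write is the correct one.
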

\begin{proof}
The first inequality may be proven very similarly to the first half of \cref{thm:stronger}.
Let us now show the second inequality. Fix diagrams $D_K$ and $D_J$ of $K$ and $J$, respectively.
Let $f\colon C(D_K) \to C(D_J)$ be a $Z$\nbd isomorphism of quantum degree $-2d(S(K), S(J))$.
Note that the maximal even integer $k$ such that $\mathcal{F}_k H(C(D_K)\otimes Z) = H(C(D_K)\otimes Z)$ is $k = s_0(K) - 2\gl(K)$ (and similarly for $J$).
Much as in the proof of the second half of \cref{thm:stronger}, we now use that $f_*$ is a filtered map of degree $-2d(S(K), S(J))$, and so
\[
f_*( \mathcal{F}_{s_0(K) - 2\gl(K)} H(C(D_K)\otimes Z))
\]
is contained in
\[
\mathcal{F}_{s_0(K) - 2\gl(K) - 2d(S(K), S(J))} H(C(D_J)\otimes Z).
\]
Since $f_*$ is surjective and $\mathcal{F}_{s_0(K) - 2\gl(K)} H(C(D_K)\otimes Z) = H(C(D_K)\otimes Z)$,
it follows that $\mathcal{F}_{s_0(K) - 2\gl(K) - 2d(S(K), S(J))} H(C(D_J)\otimes Z) = H(C(D_J)\otimes Z)$.
Hence we have
\begin{align*}
s_0(K) - 2\gl(K)-2d(S(K), S(J)) & \ \leq\  s_0(J) - 2\gl(J) \ \Rightarrow\\
\frac{ s_0(K) - s_0(J)}2 - \gl(K) + \gl(J)& \ \leq\  d(S(K), S(J)). 
\end{align*}
Switching the roles of $K$ and $J$ concludes the proof.
\end{proof}
A knot $K$ is called \emph{squeezed} if there is a smooth genus-minimizing cobordism
from a negative to a positive knot factorizing through $K$~\cite{sqz}.
Many classes of knots, such as alternating and quasipositive knots, are squeezed.
On squeezed knots, many concordance invariants (such as slice-torus invariants)
contain no more information than the Rasmussen invariant.
This is also true for $S$.
\begin{proposition}\label{prop:sqz}
If $K$ is a squeezed knot (e.g.~if $K$ is quasipositive),
or if $K$ has thin reduced Khovanov homology (e.g.~if $K$ is quasi-alternating),
then $S(K)$ is represented by $t^0 q^{s_0(K)} \Z[G]$.
\end{proposition}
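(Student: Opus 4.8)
The plan is to handle the two hypotheses separately, reducing both to the observation that $S(K)$ is an idempotent-free, "minimal-width" situation. First I would treat the squeezed case. By definition there is a smooth genus-minimizing cobordism from a negative knot $K_-$ through $K$ to a positive knot $K_+$; write $g_-$ for the genus of the cobordism from $K_-$ to $K$ and $g_+$ for that from $K$ to $K_+$. For positive (and negative) knots the Khovanov chain complex is known to be $Z$\nbd equivalent to $t^0 q^{s_0}\Z[G]$ — indeed positive braids have homologically thin, essentially "tautological" universal complexes, so $C(K_\pm)$ has a knot-like representative of graded rank $t^0 q^{s_0(K_\pm)}$ (alternatively, $s_c(K_\pm)=s_0(K_\pm)$ for all $c$ and $\gl(K_\pm)=0$, and one checks this forces the class to be $t^0q^{s_0}\Z[G]$ — see the next paragraph for this implication). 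Then \cref{prop:slicegenus} gives $d(S(K_-),S(K))\le g_-$ and $d(S(K),S(K_+))\le g_+$, while additivity of $s_0$ along the squeezing cobordism gives $s_0(K_+)-s_0(K_-)=2(g_-+g_+)$. Combining with the triangle inequality for the metric $d$, the point $S(K)$ must lie on a geodesic between $t^0q^{s_0(K_-)}\Z[G]$ and $t^0q^{s_0(K_+)}\Z[G]$ at distance exactly $g_-$ from the former; the key step is then a rigidity lemma saying that any knot-like complex at cobordism-distance $g_-$ from $t^0q^{a}\Z[G]$ and $g_+$ from $t^0q^{a+2(g_-+g_+)}\Z[G]$ is $Z$\nbd equivalent to $t^0q^{a+2g_-}\Z[G]=t^0q^{s_0(K)}\Z[G]$.

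For that rigidity lemma — which I expect to be the main obstacle — I would argue as follows. Suppose $C$ is knot-like with a $Z$\nbd isomorphism $C \to t^0q^a\Z[G]$ of quantum degree $-2g_-$ and one $t^0q^{a+2(g_-+g_+)}\Z[G]\to C$ of quantum degree $-2g_+$; composing, we get a $Z$\nbd endomorphism of $t^0 q^a \Z[G]$ of quantum degree $-2(g_-+g_+)+2(g_-+g_+)=0$ that induces an isomorphism on $H_0(\,\cdot\otimes Z)$, hence is $\pm 1$. So $t^0 q^a\Z[G]$ is a retract (up to homotopy) of $C$ in the category of complexes, and by the splitting lemma $C$ contains $t^0q^a\Z[G]$ as a direct summand — but the quantum degrees of the two maps are $-2g_-$ and $-2g_+$, so to make the composite have degree $0$ we actually need $g_-=g_+=0$ unless we are more careful. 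The honest statement is: the composite $Z$\nbd isomorphism $t^0q^a\Z[G] \to C \to t^0q^{a+2(g_-+g_+)}\Z[G]$ has quantum degree $-2(g_-+g_+)$, so is multiplication by $\pm G^{g_-+g_+}$; but it must induce an iso on $H_0(\,\cdot\otimes Z)$, and multiplication by $G^{g_-+g_+}$ induces multiplication by $1$ there, so this is consistent, and one reads off that the two composites exhibit $t^0q^{a}\Z[G]$ and $t^0q^{a+2(g_-+g_+)}\Z[G]$ as "sub" and "quotient" in a way that pins $C$. Concretely, I would instead use the filtration description: by the same argument as in the $\gl$ bound proposition, $d(S(K),t^0q^{s_0(K_-)}\Z[G])\le g_-$ forces $\mathcal F_{s_0(K_-)-2g_-}H(C\otimes Z)=H(C\otimes Z)$, i.e. $s_0(K)-2\gl(K)\ge s_0(K_-)-2g_- = s_0(K)-2g_-$ after using $s_0(K)-s_0(K_-)=2g_-$ (additivity along a genus-$g_-$ cobordism between a knot and a negative knot, whose $\gl$ vanishes); symmetrically from the $K_+$ side one gets $\gl(K)\le g_+ - g_+ = 0$, wait — more precisely one gets $2\gl(K)\le (s_0(K_+)-s_0(K))-2\cdot 0 +\dots$; in any case the two inequalities sandwich $\gl(K)=0$. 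Once $\gl(K)=0$, the filtration on $H_0(C\otimes Z)\cong\Z$ jumps exactly once, at $q$\nbd level $s_0(K)$, so $C\otimes Z$ has a generator of $H_0$ represented by a homogeneous cycle of quantum grading $s_0(K)$, and since $s_c(K)=s_0(K)$ for all $c$ by \cref{prop:rasmussenF} together with squeezedness, \cref{prop:structureF} shows that over every field $C\otimes\mathbb F[G]$ has its special summand in the same spot; a direct argument on $C$ itself (the cycle generating $H_0(C\otimes Z)$ lies in a summand isomorphic to $t^0q^{s_0(K)}\Z[G]$ by \cref{lem:inverse}-type splitting, using $\chi=1$) then upgrades this to $C\sim t^0q^{s_0(K)}\Z[G]$ via \cref{lem:summand}(ii). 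The main obstacle is making this "$\gl=0$ plus right $s_0$ implies the class is $t^0q^{s_0}\Z[G]$" step airtight — it is essentially the statement that a knot-like complex whose homology over $Z$ is concentrated in a single filtration level, and which is moreover thin, has no hidden integral torsion subtlety; concretely one shows the special summand $t^0q^{s_0(K)}\Z[G]$ guaranteed by \cref{lem:summand}(iii) together with uniqueness of the odd-Euler-characteristic summand forces the complement to have trivial homology over $Z$, hence to be killable up to $Z$\nbd equivalence.

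For the thin case the argument is cleaner and I would do it more or less directly. If $K$ has thin reduced Khovanov homology, then over every field $\mathbb F$ the complex $C(K)\otimes\mathbb F[G]$ has all its generators on two adjacent diagonals, which by \cref{prop:structureF} forces $\gl=0$ for all characteristics and all the $s_c(K)$ to coincide with $s_0(K)$ (this is the usual consequence of thinness: the $G$\nbd arrows in the structure theorem all have exponent $c_i=1$ and the homology is determined by the signature, so the single surviving summand is $t^0 q^{s_0(K)}$). Since $\gl(K)=0$ over $\Z$ as well (Sch\"utz's $\gl$ is a lower bound on the rational $\gl$ plus, thinness bounds the integral filtration width too — the filtration on $H_0(C(K)\otimes Z)$ has width zero because the whole complex $C(K)$ has total rank supported in a $q$\nbd window of width $2$, giving no room for a nontrivial jump), we again land in the situation $\gl(K)=0$, $s_c(K)=s_0(K)$ for all $c$, and I invoke the same rigidity step as above to conclude $S(K)=[t^0q^{s_0(K)}\Z[G]]$. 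Thus both hypotheses funnel into the single lemma that a knot-like complex with $\gl=0$ and all Rasmussen invariants equal is $Z$\nbd equivalent to $t^0q^{s_0}\Z[G]$, and proving that lemma — via the special-summand/complement-has-trivial-$Z$\nbd homology dichotomy of \cref{lem:summand} — is the crux.
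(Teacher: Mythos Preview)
Your overall architecture for the squeezed case --- sandwich $S(K)$ between two rank-one classes using the metric $d$, then invoke a rigidity statement for complexes lying on a geodesic between $t^0q^{2m}\Z[G]$ and $t^0q^{2n}\Z[G]$ --- matches the paper's. But the rigidity step, which you correctly flag as the crux, is where your argument breaks.

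Your proposed reduction, namely that a knot-like complex $C$ with $\gl(C)=0$ and $s_c(C)=s_0(C)$ for all $c$ must be $Z$\nbd equivalent to $t^0q^{s_0}\Z[G]$, is \emph{false}. The paper's own \cref{ex:staircasetimesdual} furnishes counterexamples: $(C^k)^* \sim \stair_{(2^{k+1})}\otimes\stair^*_{(2^k)}$ has $s_c=0$ for all $c$ and $s^{\Z}=(0)$ (hence $\gl=0$), yet by \cref{prop:staircases} it is not $Z$\nbd equivalent to $t^0q^0\Z[G]$. So the filtration and field invariants together do \emph{not} determine the $Z$\nbd equivalence class, and no amount of ``special-summand/complement-has-trivial-$Z$\nbd homology'' reasoning will rescue this route. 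Your attempt to split off a $t^0q^{s_0}\Z[G]$ summand via \cref{lem:inverse} is also off: that lemma produces a summand of $C\otimes C^*$, not of $C$.

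The paper's fix for the rigidity step is a one-line degree shift you nearly stumbled onto before veering into filtrations. Given $Z$\nbd isomorphisms $f\colon t^0q^{2m}\Z[G]\to C$ and $g\colon C\to t^0q^{2n}\Z[G]$ whose degrees $a,b$ sum to $2(n-m)$, simply replace $C$ by $q^{-a}C$: then $f$ becomes degree~$0$, and $g$ (now landing in $t^0q^{2n-b}\Z[G]=t^0q^{2m+a}\Z[G]=t^0q^{2m}\Z[G]$) also becomes degree~$0$. Thus $q^{-a}C\sim t^0q^{2m}\Z[G]$ directly, with no detour through $\gl$ or $s_c$.

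Two smaller issues. First, asserting that arbitrary positive knots have $S$ represented by $t^0q^{s_0}\Z[G]$ is either circular (it is an instance of the proposition) or unproven; the paper sidesteps this by using the squeezing framework of \cite{sqz}, which only requires the anchors to be \emph{torus} knots, and proves that case via \cref{lem:khtorus} (the quantum degrees in $C_0$ of a positive diagram are bounded below by $2g_4$). Second, for thin knots the paper does not go through $\gl$ at all: it cites \cite[Lemma~3.32]{ilm21}, which shows directly that $C(K)$ splits off a rank-one summand, and then \cref{lem:summand}(ii) finishes.
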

For the proof,
we need the following property of Khovanov homology,
which requires a quick dip into the actual definition of $C(D)$.
\begin{lemma}\label{lem:khtorus}
If $D$ is a positive diagram of a knot $K$, then $C_0(D)$ has a $\Z[G]$\nbd basis
consisting of homogeneous elements with quantum degree at least~$2g_4(K)$.
\end{lemma}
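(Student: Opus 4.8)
The plan is to unwind the construction of $C(D)$ from the cube of resolutions and to identify which resolution contributes in homological degree~$0$. Recall that the homological grading of $C(D)$ is normalized by the number $n_-$ of negative crossings of $D$; for a positive diagram $n_- = 0$, so the only vertex of the cube lying in homological degree~$0$ is the all-$0$-resolution, which for a positive diagram is the oriented resolution, i.e.\ the Seifert state of $D$, a disjoint union of $c$ circles with $c$ the number of Seifert circles of $D$. Hence $C_0(D)$ is precisely the graded free $\Z[G]$\nbd module the construction assigns to this single resolution, and it suffices to bound the quantum degrees of its homogeneous basis from below.

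Next one computes those quantum degrees. In the reduced universal theory over $\Z[G]$, the module attached to a resolution with $c$ circles is graded free of rank $2^{c-1}$: cutting the resolution open at a marked point of $D$ turns one circle into a distinguished arc, contributing a single factor $\Z[G]$, and a homogeneous basis is then indexed by the labelings of the remaining $c-1$ circles by the two free generators $1$ and $X$ of the Frobenius algebra $\Z[G,X]/(X^2+GX)$. Tracking the global and local $q$\nbd shifts of the construction — which are pinned down by the normalization $C(\text{unknot}) = t^0q^0\Z[G]$ of~\eqref{axiom:unknot} — one finds that at the all-$0$-resolution of a positive diagram every basis element has quantum degree at least $w(D) - c + 1$, with equality for the labeling that assigns the lower-degree generator to all $c-1$ circles; here $w(D)$ is the writhe, equal to the number $n$ of crossings since they are all positive. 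This bound $w(D)-c+1$ is the familiar right-hand side of the slice--Bennequin inequality, and since the Seifert surface obtained from $D$ by Seifert's algorithm has Euler characteristic $c - n$ and a single boundary component, $w(D) - c + 1 = n - c + 1 = 2g_D$, where $g_D$ is the genus of that surface.

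It remains to compare $g_D$ with the slice genus. The surface produced by Seifert's algorithm is a smoothly embedded surface in $S^3$ with boundary $K$; pushing its interior into $B^4$ yields a smooth slice surface for $K$, so $g_4(K) \le g_D$. Combining with the previous paragraph, every homogeneous basis element of $C_0(D)$ has quantum degree at least $2g_D \ge 2g_4(K)$, as asserted.

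The one step that needs genuine care is the quantum-degree bookkeeping in the second paragraph: one must translate the grading conventions of the chosen model of Khovanov homology into the statement that the minimal quantum degree occurring at the all-$0$-resolution equals $w(D) - c + 1$, paying attention in particular to the shift contributed by the distinguished arc. There is no conceptual obstacle — this is exactly the computation underlying the slice--Bennequin inequality for Khovanov homology, and once the global and local $q$\nbd shifts are fixed by the unknot normalization the value is forced.
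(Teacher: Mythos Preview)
Your argument follows the same route as the paper's: identify $C_0(D)$ with the module at the oriented resolution via the cube of resolutions, read off the minimal quantum degree as (number of crossings) $-$ (number of Seifert circles) $+1$, and compare with~$2g_4(K)$. One small difference worth noting: the paper concludes by invoking Rudolph's theorem that for positive knots the Seifert genus equals the smooth slice genus, obtaining the \emph{equality} $n-c+1 = 2g_4(K)$; you instead use only the trivial bound $g_4(K)\le g_D$ to get $n-c+1 = 2g_D \ge 2g_4(K)$. Since the lemma only claims an inequality, your more elementary final step is perfectly sufficient and avoids the citation.
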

\begin{proof}
Recall that $C(D)$ is defined via the cube of resolutions (see e.g.~\cite{MR1917056}),
which, since $D$ is positive, has a single vertex in homological degree zero.
Elements of $C_0(D)$ are thus given by a tensor product over $\Z[G]$,
with tensor factor $\Z[G]$ for the Seifert circle containing the base point
(since we consider reduced homology, see e.g.~\cite{ilm21}),
and a tensor factor $A = q\Z[G][X]/(X^2+GX)$ for all other Seifert circles.
A basis for $C_0(D)$ is given by pure tensors with factor $1\in\Z[G]$ for  the Seifert circle containing the base point, and factors $1$ or $X\in A$ for all other Seifert circles. The basis element with minimal quantum degree is
$1\otimes X \otimes \dots \otimes X$, which has degree $1 - k$, where $k$
is the number of Seifert circles, or rather $1 + c - k$,
taking the global degree shift by~$q^c$ into account,
where $c$ is the number of crossings of~$D$.
Because $1 + c - k = 2g_4(K)$~\cite{rudolph_QPasObstruction},
the desired statement follows.
\end{proof}
\begin{proof}[Proof of \cref{prop:sqz}]
For thin knots $K$, it has been shown in \cite[Lemma~3.32]{ilm21}
that $C(K)$ has a rank one summand, which implies the statement.
Let us now turn to squeezed knots.
Consider the function $\Z\to\mathcal{G}$ sending $n$ to the class of the
rank one complex $t^0 q^{2n}\Z[G]$. This is clearly an isometric embedding with respect
to the standard metric on $\Z$ and the metric $d$ on $\mathcal{G}$ defined above.
The `squeezing framework' Lemma~3.8 in \cite{sqz} now implies the statement of the
proposition, provided that the following three conditions hold:
\begin{enumerate}[label=(\roman*)]
\item $d(S(K), S(J)) \leq g_4(K\#\, {-J})$ for all knots $K, J$.
\item For all positive torus knots $T_+$ and negative torus knots $T_-$,
the class $S(T_{\pm})$ is represented by $t^0 q^{\pm 2g_4(T_{\pm})}\Z[G]$.
\item If $d(t^0 q^{2m}, C) + d(C, t^0 q^{2n}) = m - n$ holds for some integers $m \geq n$ and
a knot-like complex $C$, then $C \sim t^0 q^{2k}\Z[G]$ for some $k\in \{n, \ldots, m\}$.
\end{enumerate}

\textbf{(i):} This has been shown in \cref{prop:slicegenus}.

\textbf{(ii):}
Let $T_+$ be a positive torus knot $T_+$. Let us
fix a positive diagram $D$ of $T_+$ and
construct $Z$\nbd isomorphisms
$f\colon C(D) \to t^0 q^{2g_4(T_+)}\Z[G]$
and $g\colon t^0 q^{2g_4(T_+)}\Z[G] \to C(D)$
of quantum degree 0.
Since there is a cobordism of genus $g_4(T_+)$ from $T_+$ to the unknot,
by \eqref{axiom:cobo}
there exists a $Z$\nbd isomorphism $C(D) \to t^0q^0\Z[G]$
of quantum degree $-2g_4(T_+)$.
Shifting the degree of the target module of that map by $q^{2g_4(T_+)}$
yields the desired map $f$.
To construct~$g$, start with 
a $Z$\nbd isomorphism $g'\colon t^0 q^{2g_4(T_+)}\Z[G] \to C(D)$
of any degree $2r$.
If $r \geq 0$, simply let $g = G^r g'$.
Let us now consider the case $r < 0$.
By \cref{lem:khtorus},
the chain module $C_0(D)$
has a $\Z[G]$\nbd basis $b_1, \ldots, b_n$ such that $\min_i r_i = g_4(T_+)$
for $r_i = \tfrac12\qdeg b_i$. With respect to that basis, we have
\[
g'(1) = (\lambda_1 G^{r_1-r-g_4(T_+)}, \dots, \lambda_n G^{r_n-r-g_4(T_+)})
\]
for some $\lambda_i \in \Z$.
It follows that $g'(1)$ is divisible by $G^{-r}$.
Now $g = G^r g'$ is a $Z$\nbd isomorphism of the desired degree.
This concludes the proof of condition (ii) for $T_+$.
For negative torus knots $T_-$, condition (ii) follows by mirroring.

\textbf{(iii):}
By assumption, there exist $Z$\nbd isomorphisms $f\colon t^0 q^{2m}\Z[G] \to C$
and $g\colon C \to t^0 q^{2n}\Z[G]$ 
such the sum of $a = \qdeg f$ and $b = \qdeg g$ equals $2(n - m)$. Shifting degrees gives us 
the following homogeneous chain maps of quantum degree zero:
$f'\colon t^0q^{2m}\Z[G] \to q^{-a}C$ and 
$g'\colon q^{-a}C \to t^0q^{2m}\Z[G]$.
Since $f', g'$ still are $Z$\nbd isomorphisms,
it follows that $d(t^0q^{2m}\Z[G], q^{-a}C) = 0$, so $t^0q^{2m}\sim q^{-a}C$.
Since $C$ is knot-like, $a$ must be even. The statement follows.
\end{proof}
The triviality of $S$ on squeezed knots
may be used to prove the last part of \cref{thm:kernel}.
\begin{proof}[Proof of {\cref{thm:kernel}(iii)}]
For each $n\geq 1$, pick a squeezed knot~$J'_n$ that is topologically concordant to $-W^+_{3n}(T_{2,2n+1})$.
Such knots $J'_n$ exist because every topological concordance class contains strongly quasipositive knots~\cite{borodzikfeller}, which are squeezed.
Moreover, pick a topologically slice squeezed knot $J''_n$ such that $s_0(J''_n) = -s_0(J'_n)$. For example, $J''_n$ can be taken as a connected sum of copies of the $P(3,-5,-7)$ pretzel knot or its mirror image, since this pretzel knot is strongly quasipositive (thus squeezed), has Alexander polynomial~1 (and is thus topologically slice), and satisfies $s_0(P(3,-5,-7)) = 2$.
Let $J_n = J'_n \# J''_n$. It follows that $K_n = W^+_{3n}(T_{2,2n+1}) \# J_n$ is topologically slice. Moreover, $J_n$ is squeezed and $s_0(J_n) = 0$, and thus by \cref{prop:sqz}, $S(J_n)$ is trivial.
\end{proof}

\begin{remark}
By the same argument as in the above proof of {\cref{thm:kernel}(iii)},
it follows that $K_n$ is a basis of a $\Z^{\infty}$ subgroup
of the quotient $\mathcal{C} / \mathcal{Z}$, where $\mathcal{Z}$ is the subgroup of squeezed knots.
\end{remark}

\bibliographystyle{myamsalpha}
\bibliography{References}
\end{document}